\newcommand{\R}{\mathbb{R}}
\newcommand{\dd}{\textup{d}}
\newcommand{\dbl}{\left\llbracket}
\newcommand{\dbr}{\right\rrbracket}
\def\charf {\mbox{{\text 1}\kern-.30em {\text l}}}
\begin{document}
	\title{Self-Similar Radially Symmetric Solutions of the Relativistic Euler Equations  with Synge Energy}

	
	
	\titlerunning{Self-Similar Radially Symmetric Solutions of the Relativistic Euler Equations}        
	
	\author{Tommaso Ruggeri,  Ferdinand Thein, and Qinghua Xiao}

	\authorrunning{T. Ruggeri,  F. Thein, Q. Xiao} 

\institute{T. Ruggeri \at
              Department of Mathematics 	University of Bologna and Accademia Nazionale dei Lincei, Italy,\\
              \email{tommaso.ruggeri@unibo.it}       
           \and
           F. Thein \at
             Johannes Gutenberg - Universit\"at Mainz, Institute for Mathematics, Staudingerweg 9, D-55128 Mainz, Germany, \\
              \email{fthein@uni-mainz.de}  
              \and
              Q. Xiao \at
               Innovation Academy for Precision Measurement Science and Technology, Chinese Academy
	of Sciences, Wuhan 430071, China, \\
                \email{xiaoqh@apm.ac.cn}  
            }	
	\date{\today}

	\maketitle
  \begin{abstract}
	We consider self-similar, radially symmetric solutions  of the relativistic Euler equations with constitutive relations from relativistic kinetic theory, based on Synge energies for monatomic and its extension to diatomic gases. For the corresponding initial--boundary value problem, including the spherical piston problem, we prove existence and uniqueness of solutions valid for all values of the relativistic parameter 
				\(\gamma = mc^{2}/(k_{B}T)\), 	thus covering both the classical limit (\(\gamma \to \infty\)) and the ultra-relativistic regime (\(\gamma \to 0\)). 
				We further establish key structural properties of Synge energies, showing the strict negativity of the second derivative with respect to pressure at constant entropy and the monotone dependence of the characteristic velocity on~\(\gamma\). These results extend the classical theory of  self-similar flows to the relativistic framework with kinetic-theory-based constitutive equations. 
\end{abstract}

\section{Introduction}
The relativistic Euler system governs the motion of a non-dissipative fluid and arises from the conservation of particle number and energy–momentum in a relativistic framework.
It plays a central role in physics: in astrophysics and cosmology it models relativistic jets, supernova explosions, neutron star mergers, and the early universe; in high-energy physics it provides the basic continuum description of the quark–gluon plasma in relativistic heavy-ion collisions.
From the mathematical viewpoint, it represents a fundamental example of nonlinear hyperbolic PDEs, yet the theory is still far less developed than for the classical Euler equations.

Since the pioneering work of Taub~\cite{Taub}, a substantial literature has developed on the qualitative theory and numerical analysis of relativistic Euler equations, including shock waves, self-similar solutions, and the Riemann problem. In the general-relativistic setting, the coupling with Einstein’s field equations introduces additional challenges that, at present, can only be fully addressed numerically (see, e.g.,~\cite{Dumbser}). Numerical studies in the ultra-relativistic regime have recently been presented in~\cite{KLW,Kunik2024,RT2025}.

A crucial aspect of relativistic fluid dynamics is the closure relation, namely the constitutive equation linking pressure, energy density, and particle number density.
In contrast to classical fluid mechanics—where the ideal gas law provides a well-established model—formulating an accurate and general equation of state in the relativistic context is a highly nontrivial task. Most commonly used relations are valid only in asymptotic regimes: either the classical (non-relativistic) limit or the ultra-relativistic regime, where the rest mass of the particles becomes negligible or the temperature becomes extremely high.
As highlighted by Ruggeri, Xiao, and Zhao~\cite{Ruggeri-Xiao-Zhao-ARMA-2021}, this limitation constitutes one of the weakest points of the current theory of relativistic fluids.
To overcome it, insights from the mesoscopic scale—particularly relativistic kinetic theory—can be employed.

For rarefied gases, Synge~\cite{Synge} derived a constitutive relation based on the equilibrium Jüttner distribution, leading to a physically consistent expression involving ratios of modified Bessel functions of the second kind.
The Synge equation has the advantage of reducing to the classical ideal gas law in the low-temperature limit while approaching simplified ultra-relativistic laws as the temperature grows.
Pennisi and Ruggeri~\cite{Pennisi_Ruggeri} later extended this framework to polyatomic gases, producing Synge-type energies with explicit formulas in the diatomic case. Although physically consistent, these models are mathematically intricate, and only recently has the Riemann problem with Synge’s law been rigorously solved~\cite{Ruggeri-Xiao-Zhao-ARMA-2021}.

There is also a long tradition of studying self-similar solutions of the radial Euler equations, including the important \emph{spherical piston problem}, which describes the motion of a spherical boundary $r=R(t)$ driving a compressible fluid through a prescribed radial velocity condition.
Classical contributions are due to Guderley~\cite{Guderley-1942}, Taylor~\cite{Taylor-PRSA-1950}, and Neumann~\cite{Neumann-Collected works-1963}; see also Courant–Friedrichs~\cite{Courant-Friderichs} and the survey~\cite{Jessen-2011}.
The relativistic extension has been extensively investigated in connection with astrophysical phenomena such as supernova explosions and gamma-ray bursts.
Taub~\cite{Taub} derived the relativistic Rankine–Hugoniot conditions, while the Blandford–McKee solution~\cite{Blandford_McKee} describes ultra-relativistic spherical blast waves.
General treatments of relativistic piston-driven flows are found in Anile~\cite{anile} and in subsequent work on imploding relativistic shocks.
More recently, Lai~\cite{Lai2019} investigated self-similar solutions of the radial relativistic Euler equations with classical constitutive laws, while Peng–Lien~\cite{Peng-Lien-2012}, Ding–Li~\cite{Ding-Li-ZAMP-2013}, and others~\cite{Ha-QAM-2014,Lai2023,Lu-QAM-2024} studied the piston problem in various regimes.

Regarding the classical regimes, several further remarks are in order. In~\cite{Zhang-ARMA-1998}, self-similar solutions to the isentropic Euler system with axisymmetry were analyzed.
Thanks to the particular structure of the Euler equations, the self-similar variables $(u,v)$ satisfy an autonomous system after a suitable transformation, enabling a precise structural analysis.
The resulting self-similar solutions are continuous for general initial data with $u_0 \ge 0$, and of class $C^1$ in the special case $v_0=0$.
The method of~\cite{Zhang-ARMA-1998} was extended in~\cite{HL-BLMSS-2014} to the full Euler system with radial symmetry.
For the corresponding initial–boundary value problem, the authors constructed a global bounded continuous solution for $u_0 \ge 0$ and a discontinuous weak solution with a shock wave for $u_0<0$, providing a detailed analysis of their structure.
Further contributions on solutions with symmetry or similarity can be found in~\cite{Jenssen2018,Jenssen2023}.
Smooth self-similar solutions to the Euler equations have also been investigated in the preprint~\cite{Merle2019}.

The purpose of this work is to extend these studies to the relativistic Euler equations with Synge energies for monoatomic and diatomic gases under radial symmetry.
We establish existence and uniqueness theorems for self-similar solutions and prove structural properties of Synge’s energy—most notably, the strict negativity of the second derivative with respect to pressure at constant entropy, and the monotone dependence of the characteristic velocity on the relativistic temperature parameter.

The paper is organized as follows.
Section~\ref{sec:Euler_eqnes} introduces the relativistic Euler equations.
Section~\ref{sec:synge_eos} reviews the constitutive relations from kinetic theory, with emphasis on Synge energies.
Section~\ref{sec:Radiale} derives the differential system under radial symmetry and formulates the ODEs governing self-similar solutions.
The main results and their proofs in the general radially symmetric case are given in Section~\ref{sec:main_res_prf}.
The theorem and its proof for the spherical piston problem are presented in Section~\ref{sec:piston_res_prf}.

The Appendix collects technical estimates on Synge energies and Bessel
functions, including the proof that the second derivative of the Synge
energy with respect to pressure at constant entropy satisfies
$e_{pp}<0$ for all $\gamma>0$.

\section{Governing Equations}\label{sec:Euler_eqnes}

Relativistic flows are governed by the conservation of particle number and energy--momentum \cite{anile,Cercignani-Kremer,Groot-Leeuwen-Weert-1980,Taub,Synge}:
\begin{align}\label{Euler0}
\begin{split}
   &\partial_\alpha V^\alpha = 0 
   \quad \Longleftrightarrow \quad 
   \frac{1}{c}\partial_t V^0 + \partial_i V^i = 0, \\
   &\partial_\alpha T^{\alpha\beta} = 0 
   \quad \Longleftrightarrow \quad 
   \begin{cases}
      \dfrac{1}{c}\partial_t T^{0j} + \partial_i T^{ij} = 0, \\[1ex]
      \dfrac{1}{c}\partial_t T^{00} + \partial_i T^{0i} = 0.
   \end{cases}
\end{split}
\end{align}
Here $\alpha,\beta=0,\dots,d$ and $i,j=1,\dots,d$, where $d\in\{1,2,3\}$ is the space dimension. We write $x^\alpha \equiv (ct,x^j)$ with
\[
\partial_\alpha =\frac{\partial}{\partial x^\alpha} \equiv  \left(\frac{1}{c}\partial_t,\, \partial_{x^i}\right),
\]
$c$ denotes the light velocity while $(t,x^i)$ are time and space coordinates.
The particle flux and the energy--momentum tensor are
\begin{align}\label{def:energy_mom_tensor}
V^\alpha = \rho U^\alpha ,
\qquad
T^{\alpha\beta} = \frac{e+p}{c^2} U^\alpha  U^\beta - p g^{\alpha\beta},
\end{align}
where $\rho = nm$ is the  density, $m$ the rest mass and $n$ the particle number, $U^\alpha \equiv\Gamma(c,v^j)$ the four-velocity with Lorentz factor
\[
\Gamma = \frac{1}{\sqrt{1-\frac{v^2}{c^2}}},
\]
$p$ is the pressure, $g^{\alpha\beta}$ the Minkowski metric, i. e diagonal with $g^{00}=+1$ and the other terms in the diagonal equal to $-1$. The energy  reads
\[
e = \rho c^2 + \rho \varepsilon,
\]
where $\varepsilon$ is the specific internal energy. In space--time form, \eqref{Euler0} becomes (cf.~\cite{anile,Ruggeri2021b})
\begin{align}\label{Euleroplane}
\begin{split}
 & \partial_t\!\left(\frac{c\rho}{\sqrt{c^2-v^2}}\right) + \partial_i \left(\frac{c\rho  {v^i}}{\sqrt{c^2-v^2}}\right)=0, \\
  &  \partial_t\!\left(\frac{e+p}{c^2-v^2}v^j\right) + \partial_i\!\left(\frac{e+p}{c^2-v^2}v^i v^j + p\delta^{ij}\right) =0, \\
  & \partial_t\!\left(e+\frac{v^2}{c^2-v^2}(e+p)\right) + \partial_i \left(\frac{c^2(e+p)}{c^2-v^2} {v^i}\right)=0.
\end{split}
\end{align}

As a consequence of \eqref{Euleroplane}, smooth solutions also satisfy the entropy balance
\begin{align}\label{relEuler_sys:entropy}
\partial_\alpha (\rho \eta U^\alpha) =0, \quad \Leftrightarrow \quad 
 \partial_t\!\left(\frac{c\rho \eta}{\sqrt{c^2-v^2}}\right) + \partial_i\left(\frac{c\rho \eta {v^i}}{\sqrt{c^2-v^2}}\right)=0 ,
\end{align}
provided the Gibbs relation
\begin{align}\label{Gibbs}
T\,d\eta = d\varepsilon - \frac{p}{\rho^2}\,d\rho
\end{align}
holds. Thus, entropy $\eta$ is constant along particle paths:
\begin{align*}
  \partial_t \eta + v_j \partial^j \eta =0.
\end{align*}

   \section{Constitutive equations and Synge Energy for monatomic and diatomic gas} \label{sec:synge_eos}
The system \eqref{Euleroplane} consists of $d+2$ scalar differential equations for $d+3$ fields $(\rho,v^j,p,e)$.
Therefore, a constitutive equation is required in order to have as many equations as unknowns.
For example, one may prescribe $e \equiv e(\rho,p)$. 
However, it is usually more convenient to introduce the temperature $T$ as a parameter and to specify the so-called thermal and caloric equations of state:
\begin{equation*}
  e \equiv e(\rho,T), \qquad p \equiv p(\rho,T).
\end{equation*}
Starting with Taub in 1949 \cite{Taub}, many authors have associated the relativistic Euler equations with classical thermal and caloric equations of state, or with simplified constitutive relations in the ultra-relativistic regime.
As observed in \cite{Ruggeri-Xiao-Zhao-ARMA-2021}, to obtain more realistic equations in the relativistic regime, particularly for rarefied gases, it is necessary to justify this association at the mesoscopic scale using kinetic theory.

Within the relativistic kinetic framework, we have the Boltzmann--Chernikov equation \cite{Cercignani-Kremer}:
\begin{equation}\label{B-C}
  p^\alpha \partial_\alpha f = Q,
\end{equation}
where $f \equiv f(x^\alpha,p^\beta)$ is the distribution function, $p^\alpha$ is the four-momentum with the property $p^\alpha p_\alpha = m^2 c^2$, and $Q$ is the collision term. 
In this case, $V^\alpha$ and $T^{\alpha\beta}$ are given as moments of the distribution function:
\begin{align}\label{moments3}
  \begin{split}
    V^\alpha &= m c \int_{\R^3} f \, p^\alpha \, d \vec{P}, \qquad
    T^{\alpha \beta} = c \int_{\R^3} f \, p^\alpha p^\beta \, d \vec{P},
  \end{split}
\end{align}
where
\begin{equation*}
  d \vec{P} = \frac{dp^1 \, dp^2 \, dp^3}{p^0}.
\end{equation*}
It is well known that the equilibrium distribution is the so-called J\"uttner distribution:
\begin{equation}\label{fJ}
  f_J = \frac{1}{4 \pi m^3 c^3} \, \frac{n \gamma}{K_2(\gamma)} \,
  \exp\!\left(- \frac{\gamma}{m c^2} U_\beta p^\beta\right),
\end{equation}
where $\gamma$ is the dimensionless parameter, some time called \emph{relativistic temperature parameter} or \emph{coldness parameter} (see, e.g. \cite{Cercignani-Kremer})
\begin{equation}\label{coldness}
  \gamma = \frac{m c^2}{k_B T},
\end{equation}
with $k_B$ being the Boltzmann constant. 
This parameter measures the relativistic character of the fluid: the fluid is relativistic if $\gamma \ll 1$; the classical limit corresponds to $\gamma \to \infty$; and the ultra-relativistic limit corresponds to $\gamma \to 0$.

	Substituting \eqref{fJ} into \eqref{moments3} and considering \eqref{def:energy_mom_tensor}, we obtain the following expressions for pressure and energy:
\begin{align}\label{caloric}
  \begin{split}
    p &= \frac{\rho c^2}{\gamma} = \frac{k_B}{m} \rho T, \quad
    e = \frac{\rho  c^2}{K_2(\gamma)} \left(K_3(\gamma) - \frac{1}{\gamma} K_2(\gamma)\right),
  \end{split}
\end{align}
where $K_i(\gamma)$ with $i \geq 0$ denotes the modified Bessel function of the second kind.  
The expression for the energy in \eqref{caloric} is known as the Synge energy \cite{Synge,anile,Rezzolla}.  

Pennisi and Ruggeri \cite{Pennisi_Ruggeri}, drawing an analogy with the classical case first introduced by C. Borgnakke and P. S. Larsen \cite{Borgnakke1975}, proposed a generalized Boltzmann--Chernikov equation. This retains the form of \eqref{B-C} but extends the distribution function $f \equiv f(x^\alpha,p^\beta,\mathcal{I})$ to depend on an additional variable $\mathcal{I}$, which accounts for the energy due to a molecule's internal degrees of freedom. Instead of \eqref{moments3}, they considered the following moments:
\begin{align*}
  \begin{split}
    V^\alpha &= m c \int_{\R^3} \int_0^{+\infty} f \, p^\alpha \, \phi(\mathcal{I}) \, d \vec{P} \, d\mathcal{I}, \\
    T^{\alpha \beta} &= \frac{1}{mc} \int_{\R^3} \int_0^{+\infty} f \, ( mc^2 + \mathcal{I} ) \, p^\alpha p^\beta \, \phi(\mathcal{I}) \, d \vec{P} \, d\mathcal{I},
  \end{split}
\end{align*}
where the energy--momentum tensor is influenced by the internal modes.  
Here, $\phi(\mathcal{I})$ is the state density of the internal modes, meaning that $\phi(\mathcal{I}) \, d\mathcal{I}$ represents the number of internal states of a molecule with microscopic internal energy between $\mathcal{I}$ and $\mathcal{I}+d\mathcal{I}$.

Using the \emph{Maximum Entropy Principle} (MEP), the generalized J\"uttner equilibrium distribution for polyatomic gases was derived in \cite{Pennisi_Ruggeri} (for more details on the MEP, see \cite{Ruggeri2021b}, the historical survey \cite{Dreyer}, and the references therein):
\begin{equation}\label{5.2n}
  f_E = \frac{n \gamma}{A(\gamma) K_2(\gamma)} \frac{1}{4 \pi m^3 c^3}
  \exp\!\left(- \frac{\gamma}{mc^2} \left( 1 + \frac{\mathcal{I}}{m c^2} \right) U_\beta p^\beta \right),
\end{equation}
with $A(\gamma)$ given by
\begin{equation*}
  A(\gamma) = \frac{\gamma}{K_2(\gamma)} \int_0^{+\infty} \frac{K_2(\gamma^*)}{\gamma^*} \, \phi(\mathcal{I}) \, d\mathcal{I}, 
  \qquad \gamma^* = \gamma + \frac{\mathcal{I}}{k_B T}.
\end{equation*}
The pressure and energy for polyatomic gases, consistent with the distribution function \eqref{5.2n}, are given by \cite{Pennisi_Ruggeri}:
\begin{align}\label{3n}
  \begin{split}
    p &= \frac{\rho c^2}{\gamma} = \frac{k_B}{m} \rho T, \\
    e &= \frac{n m c^2}{A(\gamma) K_2(\gamma)} 
      \int_0^{+\infty} \left( K_3(\gamma^*) - \frac{1}{\gamma^*} K_2(\gamma^*) \right) \phi(\mathcal{I}) \, d\mathcal{I}.
  \end{split}
\end{align}
We note that the expression for pressure is the same for both monatomic and polyatomic gases, while \eqref{3n}$_2$ generalizes the Synge energy to polyatomic gases.

Formally, if we take the measure $\phi(\mathcal{I})$ to be the Dirac delta function, $\phi(\mathcal{I}) = \delta(\mathcal{I})$, all integrals reduce to those of the monatomic Synge theory. In particular, the caloric and thermal equations of state in \eqref{3n} reduce to those of a monatomic gas \eqref{caloric}.  
On the other hand, in the classical limit $\gamma \to \infty$, the macroscopic internal energy
\begin{equation*}
  \varepsilon = \frac{e}{\rho} - c^2
\end{equation*}
converges to that of a classical polyatomic gas:
\begin{equation*}
  \lim_{\gamma \to \infty} \varepsilon = \frac{D}{2}\frac{k_B}{m} T,
\end{equation*}
provided we choose the measure $\phi(\mathcal{I}) = \mathcal{I}^a$, with exponent $a=(D-5)/2$, where 
$D=3+f^i$ is the total number of molecular degrees of freedom, given by the three translational dimensions plus the internal degrees of freedom $f^i \geq 0$ associated with rotation and vibration.  
For monatomic gases, $D=3$ and $a=-1$.  
In \cite{PRc}, it was proven that equation \eqref{3n} converges to the Synge equation \eqref{caloric} as $a \to -1$.

The evaluation of the integral in \eqref{3n} is required
to determine the polyatomic energy. However, in \cite{Ruggeri-Xiao-Zhao-ARMA-2021} it was shown that, for diatomic gases ($D=5$, i.e., $a=0$), the integral admits an explicit expression, reducing \eqref{3n} to
\begin{align}
  \begin{split}
    p &= \frac{\rho c^2}{\gamma} = \frac{k_B}{m} \rho T, \qquad
    e_\text{diat} = \rho c^2 \left(\frac{3}{\gamma} + \frac{K_0(\gamma)}{K_1(\gamma)}\right). \label{caloricdia-1}
  \end{split}
\end{align}
As observed in \cite{Ruggeri-Xiao-Zhao-ARMA-2021}, the Synge energy for monatomic gases in \eqref{caloric} can be written in a form analogous to that for diatomic gases in \eqref{caloricdia-1}, namely
\begin{align}
  e_\text{mono} = \rho c^2 \left(\frac{3}{\gamma} + \frac{K_1(\gamma)}{K_2(\gamma)}\right). \label{caloricdia-2}
\end{align}
This allows monatomic and diatomic gases to be treated in a unified way, using the following constitutive equations:
\begin{equation}\label{syngeg}
  p=\frac{\rho c^2}{\gamma}, \quad
  e=p \, \gamma \, \Phi_i(\gamma), \quad
  \Phi_i(\gamma) = \frac{3}{\gamma}+ h_i(\gamma), \quad
  h_i(\gamma) = \frac{K_i(\gamma)}{K_{i+1}(\gamma)}, \, (i=0,1).
\end{equation}
For $i=1$, the gas is monatomic \eqref{caloricdia-2}, while for $i=0$ it is diatomic \eqref{caloricdia-1}.

	\subsection{Change of variables from $(\rho,T)$ to $(\eta,\gamma)$}
	Now it is convenient to take  $(p,\eta)$ as independent variables for constitutive equations. 
Let us now calculate
\begin{equation*}
	e_p = \left( \frac{\partial e}{\partial p}\right)_\eta, \qquad  
	e_{pp} = \left( \frac{\partial^2 e}{\partial p^2}\right)_\eta.
\end{equation*}
To this end, the Gibbs equation \eqref{Gibbs} can be rewritten as
\begin{equation}\label{GibbsR}
	T\, d\eta = \frac{1}{\rho}\, de - \frac{e+p}{\rho^2}\, d\rho.
\end{equation}
Therefore, for constant $\eta$, we have
\begin{equation}\label{de1}
	de = \frac{e+p}{\rho}\, d\rho.
\end{equation}
From \eqref{syngeg}$_1$, it follows that
\begin{equation}\label{dp1}
	dp = p\left(\frac{d\rho}{\rho}-\frac{d\gamma}{\gamma}\right).
\end{equation}
Differentiating $e$ from \eqref{syngeg}$_2$ and comparing it with \eqref{de1} for $\eta=\text{const.}$, we obtain
\begin{equation*}
	d\rho = \rho \, \gamma \Phi'_i \, d\gamma.
\end{equation*}
Inserting this identity into \eqref{dp1}, we find
\begin{equation}\label{dgamma1}
	p\,\frac{d\gamma}{\gamma} = \frac{1}{\gamma^2 \Phi'_i -1}\, dp,
	\qquad \Rightarrow \qquad 
	\frac{p}{\gamma}\,\gamma_p = \frac{1}{\gamma^2 \Phi'_i -1}.
\end{equation}
Then, from \eqref{syngeg}$_2$ and \eqref{dgamma1}, we obtain
\begin{equation}\label{ep_gamma_phi_rel}
	e_p = \frac{\gamma^2 \Phi'_i\big(\gamma\Phi_i+1\big)}{\gamma^2\Phi'_i -1}.
\end{equation}
Taking into account \eqref{syngeg}$_3$, we have from \cite{Ruggeri-Xiao-Zhao-ARMA-2021} that
\begin{equation}\label{ep2}
	e_p = 3+ \sigma_i(\gamma),
\end{equation}
with
\begin{equation}\label{sigmai2}
	\sigma_i(\gamma) = \frac{\gamma h_i+4}{g_i} + \gamma h_i + 1,
\end{equation}
and
\begin{equation}\label{gi}
	g_i = \gamma^2 h'_i - 4.
\end{equation}
Since $e_p$ is only a function of $\gamma$, using \eqref{ep2} and \eqref{dgamma1}, we deduce
\begin{equation}\label{pepp2}
	p\, e_{pp} = \frac{\gamma \,\sigma'_i(\gamma)}{g_i}.
\end{equation}
Similarly, from \eqref{syngeg}$_{1,2}$ we can calculate $d\rho$ and $dp$. Then, inserting into the Gibbs equation \eqref{GibbsR} and taking $p$ constant, we deduce
\[
\gamma_\eta = \left(\frac{\partial \gamma}{\partial \eta}\right)_p = \frac{m}{k_B}\frac{\gamma}{g_i},
\]
and from \eqref{ep2} we obtain
\begin{equation}\label{epeta}
	e_{p\eta} = \frac{m}{k_B}\frac{\gamma \sigma'_i}{g_i} 
	= \frac{m}{k_B} \, p\, e_{pp}.
\end{equation}
In particular, from \eqref{transform} we can express all $K_j(\gamma)$ as a combination of only two Bessel functions.  
Therefore, using \eqref{syngeg}$_4$, \eqref{deriva}, and \eqref{transform}, we obtain
\begin{equation}\label{17}
	h'_i(\gamma) = h_i^2 + \frac{(2i+1)h_i}{\gamma} - 1,
\end{equation}
and then from \eqref{gi}
\begin{equation}\label{gi4}
	g_i = \gamma^2 (h_i^2-1) + (2i+1)\gamma h_i - 4.
\end{equation}
After straightforward but lengthy algebraic manipulations, we deduce from \eqref{sigmai2} and \eqref{gi4} that
\begin{equation}\label{sigmap}
	\sigma'_i = q_i \left(1-\frac{1}{s_i}\right)
	- \frac{(\gamma h_i+4)\,\big(q_i(2\gamma h_i+2i+1)-2\gamma\big)}{s_i^2},
\end{equation}
with
\begin{equation*}
	q_i = h_i(\gamma h_i+2(i+1))-\gamma, 
	\qquad s_i = \gamma^2(1-h_i^2) - \gamma h_i(2i+1)+4.
\end{equation*}
Therefore, using \eqref{ep2}, \eqref{sigmai2}, \eqref{pepp2}, \eqref{17}, and \eqref{sigmap}, we can obtain explicit expressions for $e_p$ and $p e_{pp}$, which depend only on $\gamma$ and are non-dimensional quantities.  
In particular, for $i=1$ and $i=0$, we obtain
\begin{align*}
	e_{p|\text{mono}} &= 3 + \frac{-\gamma^2 + \gamma^3h_1^3 - \gamma^3h_1 + 4\gamma^2h_1^2}{-\gamma^2 + \gamma^2h_1^2 + 3\gamma h_1 - 4},\\
	e_{p|\text{diat}} &= 3 + \frac{-\gamma^2 + \gamma^3h_0^3 - \gamma^3h_0 + 2\gamma^2h_0^2 - 2\gamma h_0}{-\gamma^2 + \gamma^2 h_0^2 + \gamma h_0 - 4},
\end{align*}
and
\begin{align}
	p\, e_{pp|\text{mono}} &= \frac{\gamma^2 I_1(\gamma) I_2(\gamma) I_3(\gamma)}{\big[-\gamma^2 + \gamma h_1 (\gamma h_1 + 3) - 4\big]^3}, \label{epp-mon} \\
	p\, e_{pp|\text{dia}} &= \frac{\gamma f_{\text{dia}}}{\big[\gamma(\gamma(h_0^2 - 1) + h_0) - 4\big]^3}. \label{epp-dia}
\end{align}
Here,
\begin{align*}
	I_1(\gamma) &= -\gamma^2 + \gamma h_1 (\gamma h_1 + 2) - 8, \quad
	I_2(\gamma) = -\gamma + h_1 (\gamma h_1 + 4) - 1,\\
	I_3(\gamma) &= -\gamma + h_1 (\gamma h_1 + 4) + 1,
\end{align*}
and
\begin{align*}
	f_{\text{dia}} &= \gamma^5 \big(h_0^2 - 1\big)^3 + 4\gamma^4 h_0 \big(h_0^2 - 1\big)^2 
	- 32\gamma^2 h_0 \big(h_0^2 - 1\big) - 24\gamma h_0^2 \\
	&\quad + \gamma^3\big(-4 h_0^4 + 11h_0^2 - 7\big) + 16 h_0.
\end{align*}
These expressions are not new, as they were previously derived separately for monatomic and diatomic gases in \cite{Ruggeri-Xiao-Zhao-ARMA-2021}.  
However, this unified treatment now makes it possible to establish an important new inequality valid for both types of gases.
\subsection{Proof of $e_{pp}<0$}
In \cite{Ruggeri-Xiao-Zhao-ARMA-2021} it was proved that $e_p>3$ for any $\gamma>0$.  
Therefore, from \eqref{ep2} we deduce that $\sigma_i>0$.  
As a consequence of \eqref{pg} and \eqref{eppd-den}, we obtain for any $\gamma \geq 0$:
\begin{equation*}
    g_i < 0.
\end{equation*}
%

Moreover, \cite{Ruggeri-Xiao-Zhao-ARMA-2021} verified that the maximal
characteristic velocity of the Euler system in the rest frame, denoted by $\hat{\lambda}$, 
when expressed in units of the speed of light, is given by
\begin{equation}\label{LLLL}
    \tilde{\lambda} = \frac{\hat{\lambda}}{c} = \frac{1}{\sqrt{e_p}}
    = \frac{1}{\sqrt{3+\sigma_i}}.
\end{equation}
This quantity depends only on $\gamma$ and attains its maximum value in the ultra-relativistic limit, 
where $\tilde{\lambda}=1/\sqrt{3}$.  

To prove rigorously that $\tilde{\lambda}$ decreases monotonically with $\gamma$, 
it suffices to show from \eqref{LLLL} that $\sigma_i' > 0$.  
From \eqref{pepp2}, this is equivalent to proving that 
$p \, e_{pp}$ is negative.  
This condition, however, appears not to have been established in the literature for the Synge energy, 
and is in fact stronger than the inequality proved in \cite{Ruggeri-Xiao-Zhao-ARMA-2021}, namely
\begin{equation*}
    (e+p)\, e_{pp} - 2 e_p (e_p - 1) < 0.
\end{equation*}
The inequality $e_{pp}<0$ plays a crucial role in our two main theorems. 
Its rigorous proof is rather technical and will be presented in 
Appendix~\ref{Appendice1}.

	%
      \section{Radial symmetry and self-similarity.} \label{sec:Radiale}
For radially symmetric flows with velocity
\begin{equation*}
\mathbf{v}(t,\mathbf{x}) = u(t,r) \, \mathbf{n},
\quad r=|\mathbf{x}|, \quad \mathbf{n}=\frac{\mathbf{x}}{r}, \quad |\mathbf{n}| =1,    
\end{equation*}
system \eqref{Euleroplane} reduces to
\begin{align}\label{Euler31}
\begin{split}
\partial_t\!\left(\frac{c\rho}{\sqrt{c^2-u^2}}\right) + \partial_r\!\left(\frac{c\rho u}{\sqrt{c^2-u^2}}\right) &= -\frac{d-1}{r}\frac{c\rho u}{\sqrt{c^2-u^2}}, \\
\partial_t\!\left(\frac{e+p}{c^2-u^2}u\right) + \partial_r\!\left(\frac{e+p}{c^2-u^2}u^2 + p\right) &= -\frac{d-1}{r}\frac{e+p}{c^2-u^2}u^2, \\
\partial_t\!\left(e+\frac{u^2}{c^2-u^2}(e+p)\right) + \partial_r\!\left(\frac{c^2(e+p)u}{c^2-u^2}\right) &= -\frac{d-1}{r}\frac{c^2(e+p)u}{c^2-u^2}.
\end{split}
\end{align}
The corresponding entropy law \eqref{relEuler_sys:entropy} becomes:
\begin{align*}
\partial_t\!\left(\frac{c\rho \eta}{\sqrt{c^2-u^2}}\right) + \partial_r\!\left(\frac{c\rho \eta u}{\sqrt{c^2-u^2}}\right) = -\frac{d-1}{r}\frac{c\rho \eta u}{\sqrt{c^2-u^2}},
\end{align*}
and hence
\begin{align*}
0 = \partial_t \eta + u\partial_r \eta.
\end{align*}
If in addition the flow is self-similar, i.e. variables depend on $(t,r)$ only through $s=t/r$, then \eqref{Euler31} reduces to the ODE system
\begin{align}\label{systemODE}
\begin{split}
\frac{du}{ds} &= \frac{(d-1)u(c^2-u^2)(u-sc^2)}{g}, \\
\frac{dp}{ds} &= \frac{(d-1)uc^2(us-1)(e+p)}{g}, \\
-s(us-1)\frac{d\eta}{ds} &= 0, \\
g &= c^2 e_p (us-1)^2 - (u-sc^2)^2.
\end{split}
\end{align}
 Note that for relativistic flows $0\leq \xi \leq c$, where $\xi=1/s$. The derivation parallels \cite{Lai2019}, with adapted notation.

 \section{Main result and proof for the general radially symmetric case}\label{sec:main_res_prf}
\begin{theorem}\label{thm:main}
Let $(u_0(r),p_0(r),\eta_0(r))\in (-c,c)\times\R_{>0}\times\R_{>0}$ be an initial datum such that
\[
(u,p,\eta)(0,r) = (u_0,p_0,\eta_0),
\]
and considering the following boundary condition:
\[
 (e(p,\eta)\,u)(t,0)=0.
\]
Then the system \eqref{Euler31} admits a unique self-similar entropy solution.  
\begin{itemize}
    \item If $u_0 \in [0,c)$, the solution is continuous and free of shocks.  
    \item If $u_0 \in (-c,0)$, the solution contains a single shock followed by a constant state.  
\end{itemize}
\end{theorem}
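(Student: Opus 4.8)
The plan is to recast the problem as a phase‑plane analysis of the autonomous ODE system \eqref{systemODE} for $(u,p,\eta)$ as functions of the self‑similar variable, carefully tracking the degeneracy locus $\{g=0\}$. Since the energy density is strictly positive, the boundary condition $(e(p,\eta)u)(t,0)=0$ reduces to $u(t,0)=0$, i.e.\ the profile must satisfy $u\to 0$ as $\xi=1/s\to 0^+$, while the initial condition forces $(u,p,\eta)\to(u_0,p_0,\eta_0)$ in the far field $\xi\to\infty$. The third equation of \eqref{systemODE} shows $\eta$ is locally constant away from the set $us=1$ and from shocks, so $\eta\equiv\eta_0$ in every smooth piece and may jump only across a shock, where it must strictly increase. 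The crucial point is that $g$ vanishes exactly where $\xi$ equals one of the lab‑frame acoustic speeds obtained by relativistic composition of $u$ with $\pm c\,\tilde\lambda$, $\tilde\lambda=1/\sqrt{e_p}=1/\sqrt{3+\sigma_i}$; since $e_p>3$ one has $|\lambda_\pm|<c$, so the integration always takes place on a bounded $\xi$‑interval. Here the structural facts proved above enter decisively: $e_p>3$ for all $\gamma>0$, and, from Appendix~\ref{Appendice1}, $e_{pp}<0$, equivalently $g_i<0$ and $\sigma_i'>0$, so that $\tilde\lambda$ is strictly decreasing in $\gamma$; this controls the sign of $dp/d\xi$ along trajectories and the orientation of $\{g=0\}$, which is what lets the construction close uniformly in $\gamma$.

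For $u_0\in[0,c)$ I would build the continuous solution from the origin outward: near $\xi=0$ take the constant state $(0,p_\ast,\eta_0)$, which solves \eqref{systemODE} trivially, and let the profile leave it at the sonic head $\xi_\ast=c\,\tilde\lambda(p_\ast,\eta_0)$, where $g=0$. Because $u=0$ there, the numerator of $du/ds$ also vanishes, so this is an equilibrium‑type singularity of the system reparametrized by $\xi$, and a local (linearization / normal‑form) analysis shows a \emph{unique} smooth branch leaves the constant state into $\{g<0\}$. Away from $\{g=0\}$ the system is regular, so Picard--Lindel\"of gives local existence and uniqueness; the real work is the a priori control — using \eqref{GibbsR}, the identities \eqref{dgamma1}--\eqref{epeta}, and the signs $e_p>3$, $e_{pp}<0$ — showing that along the trajectory $u$ stays in $[0,c)$, $p$ stays positive and monotone, $u<\xi$ throughout (so $\eta\equiv\eta_0$ with no interior jump), and $\{g=0\}$ is not met again transversally before $\xi\to\infty$. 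One then fixes $p_\ast$ (equivalently $\xi_\ast$) by the shooting condition $(u,p)(\xi)\to(u_0,p_0)$ as $\xi\to\infty$ and verifies that the shooting map is a bijection; continuity at $\xi_\ast$ and in the limit then produces a genuine self‑similar entropy solution, and uniqueness of each ingredient gives uniqueness within the continuous class.

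For $u_0\in(-c,0)$ the same analysis shows that a smooth profile reaching $u(0)=0$ is impossible: integrating from $(u_0,p_0,\eta_0)$ one meets $\{g=0\}$ at a point where $u\neq 0$, where the numerator of $du/ds$ is nonzero, so the trajectory has a vertical tangent in $\xi$ and cannot be continued smoothly down to $\xi=0$. One therefore inserts a single shock at $\xi=\xi_s$, with state $(u_0,p_0,\eta_0)$ for $\xi>\xi_s$ and a constant state for $\xi<\xi_s$; being a constant solution of \eqref{systemODE} compatible with the boundary condition, this downstream state has $u\equiv 0$. The relativistic Rankine--Hugoniot (Taub) conditions with shock speed $\xi_s$ between $(u_0,p_0,\eta_0)$ and $(0,p_s,\eta_s)$ give three algebraic relations for $(p_s,\eta_s,\xi_s)$; I would show this system has a unique solution with $p_s>p_0$ and $\eta_s>\eta_0$, using $e_{pp}<0$ to guarantee that the relativistic Hugoniot locus through $(u_0,p_0,\eta_0)$ is monotone and that the Lax entropy conditions hold, so the admissible shock is unique. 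Uniqueness of the constant states on either side then gives uniqueness of the whole solution.

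The main obstacle is the degeneracy of \eqref{systemODE} on the sonic set $\{g=0\}$: ruling out spurious non‑uniqueness there, proving that the physical trajectory actually connects the constant state to the prescribed far field (resp.\ dies into the vertical‑tangent barrier in the inflow case), and doing all of this uniformly in $\gamma$ without closed forms for the Bessel‑ratio equation of state. This is precisely where $e_{pp}<0$ and, through \eqref{pepp2} and \eqref{LLLL}, the monotonicity of $\tilde\lambda$ in $\gamma$ do the heavy lifting — fixing the signs of the monotonicities along trajectories and the orientation of the Hugoniot locus — so that the technically delicate input is the appendix estimate itself.
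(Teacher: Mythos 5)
Your proposal contains genuine gaps that go beyond a mere difference of method.

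First, the reduction of the boundary condition $(e\,u)(t,0)=0$ to $u(t,0)=0$ is not valid in general. The energy $e = p\,\gamma\,\Phi_i(\gamma)$ vanishes when $p=0$, and the paper's analysis (Lemma~\ref{lem:case_1}, ``Case I'') shows that for $u_0 > \overline{u}$, with $\overline{u}$ defined by \eqref{def_velo_bound}, the pressure reaches zero at a finite $s^\ast$ with $u(s^\ast) = 1/s^\ast > 0$, and the solution continues into vacuum. In that regime the boundary condition is satisfied by $e=0$, not by $u=0$. Your shooting construction --- starting from a constant state $(0,p_\ast,\eta_0)$ near $\xi = 0$ and peeling off at the sonic head --- cannot reproduce this vacuum solution, and the trichotomy (vacuum for large $u_0$, stationary for small $u_0$, intermediate in between) that drives the positive-velocity argument is missing from your plan.

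Second, for $u_0 \in (-c,0)$ you describe the solution as the constant state $(u_0,p_0,\eta_0)$ for $\xi > \xi_s$, a shock, and a constant state for $\xi < \xi_s$, and you propose solving the Rankine--Hugoniot relations ``between $(u_0,p_0,\eta_0)$ and $(0,p_s,\eta_s)$.'' That picture is wrong except in the planar case $d=1$. Here $d \geq 2$, the source terms in \eqref{systemODE} are nonzero, and the state ahead of the shock is the \emph{evolved} solution $(u(s),p(s),\eta(s))$ of the ODE issued from $(u_0,p_0,\eta_0)$, which varies with $s$. The jump conditions therefore produce a one-parameter family of admissible shocks, parametrized by the candidate location $s\in(1/c,\bar{s})$, with left state $(u(s),p(s),\eta(s))$ and speed $\sigma=1/s$. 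The heart of the proof --- Lemma~\ref{lem:unique_shock} --- is to show that along this family the perturbed velocity $u_\delta(s)$ is positive for $s$ near $1/c$, tends to $u(\bar{s})<0$ as $s\to\bar{s}$, and satisfies $u_\delta'(s^\ast)<0$ at any zero, which yields existence and uniqueness of the shock location $s^\ast$ with $u_\delta(s^\ast)=0$. This step, which uses $e_{pp}<0$ through the sign structure of the Jacobian of the jump conditions and the monotonicity of the Hugoniot data, is entirely absent from your proposal and cannot be replaced by ``the Hugoniot locus through $(u_0,p_0,\eta_0)$ is monotone,'' since that locus is not what is being intersected.

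You do correctly identify that $e_p>3$ and $e_{pp}<0$ are the structural inputs that make the analysis go through uniformly in $\gamma$, and your observation that $g=0$ corresponds to sonic surfaces with the acoustic speed bounded away from $c$ is accurate. But the concrete construction --- shooting from the origin for $u_0\geq 0$, and a two-constant-state shock for $u_0<0$ --- does not match the actual solution structure, and the two issues above would cause the argument to fail before the structural inequalities could be brought to bear.
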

%

The proof of Theorem \ref{thm:main} is presented in the sequel through the proofs of several lemmas.  
Subsection \ref{denom_prf} begins with an analysis of the denominator of \eqref{systemODE}.  
The argument is then divided into two cases: $s \in (0,1/c]$, treated in Subsection \ref{small_s_prf}, and $s > 1/c$.  
For $s > 1/c$, the structure of the solution depends crucially on the initial velocity $u_0$, and three possible cases for $u_0 \in (0,c)$ are distinguished in Subsection \ref{init_pos_u_prf}.  
Finally, for $u_0 \in (-c,0)$ shocks appear, and the proof is completed in Subsection \ref{init_neg_u_prf}.

   \subsection{Investigation of the denominator $g$}\label{denom_prf}
   For the proof of Thm. \ref{thm:main}, it is crucial to discuss the denominator $g$ given by $\eqref{systemODE}_4$ due to its occurrence in the ODEs of velocity and pressure in system \eqref{systemODE}.
   We first study the denominator $\eqref{systemODE}_4$ for $|u| < c$ and $s \in (0,1/c]$. We obtain
   \begin{align}
   	g &= c^2e_p(us - 1)^2 - (u - sc^2)^2 =  \frac{2}{3}c^2e_p(us - 1)^2 + \frac{1}{3}c^2e_p(us - 1)^2 - (u - sc^2)^2\notag\\
   	&\stackrel{\eqref{ep2}}{>} 
   	\frac{2}{3}c^2e_p(us - 1)^2 + c^2(us - 1)^2 - (u - sc^2)^2 = \frac{2}{3}c^2e_p(us - 1)^2 + \underbrace{(u^2 - c^2)}_{\stackrel{|u|< c}{<} 0}\underbrace{(c^2s^2 - 1)}_{\stackrel{cs \leq 1}{\leq} 0}\notag\\
   	&\geq \frac{2}{3}c^2e_p(us - 1)^2 \stackrel{\eqref{ep2}}{>} 2c^2(us - 1)^2 > 0.\label{ineq:g_pos_v1}
   \end{align}
   For the case $s > 1/c$, we rewrite $g$ as follows:
   \begin{align}\label{def:g_A}
   	\begin{split}
   	    g &= c^2e_p(us - 1)^2 - (u - sc^2)^2 = A\cdot B,\\
   	A &:= c\sqrt{e_p}(us - 1) - (u - sc^2) = (c\sqrt{e_p}s - 1)u - (c\sqrt{e_p} - sc^2),\\
   	B &:= c\sqrt{e_p}(us - 1) + (u - sc^2) = (c\sqrt{e_p}s + 1)u - (c\sqrt{e_p} + sc^2).
   	\end{split}
   \end{align}
   Further we have the relations
   \begin{align}\label{cond_g_1}
  \begin{split}
       	&A \leq 0\quad\Rightarrow\quad B < 0 \quad\Rightarrow\quad g \geq 0,\\
   	&B \geq 0\quad\Rightarrow\quad A > 0 \quad\Rightarrow\quad g \geq 0. 
  \end{split}
   \end{align}
   Thus, we can conclude that
   \begin{align*}
   	g < 0\quad\Leftrightarrow\quad A > 0\;\wedge\;B<0. 
   \end{align*}
   Additionally, we define functions
   \begin{align}\label{def:phi_A}
           	\varphi_A(s) = \frac{c\sqrt{e_p} - sc^2}{c\sqrt{e_p}s - 1}, \qquad 
   	\varphi_B(s) = \frac{c\sqrt{e_p} + sc^2}{c\sqrt{e_p}s + 1}. 
   \end{align}
   For $s > 1/c$, we have for both denominators
   \begin{align*}
   	c\sqrt{e_p}s + 1 > c\sqrt{e_p}s - 1 > c\sqrt{3}\frac{1}{c} - 1 > 0, 
   \end{align*}
   and therefore we can directly conclude
   \begin{align}
   	A \begin{cases} &< 0,\\ &= 0,\\ &> 0\end{cases}\quad\Leftrightarrow\quad u\begin{cases} &< \varphi_A,\\ &= \varphi_A,\\ &> \varphi_A\end{cases}
   	\quad\text{and}\quad
   	B \begin{cases} &< 0,\\ &= 0,\\ &> 0\end{cases}\quad\Leftrightarrow\quad u\begin{cases} &< \varphi_B,\\ &= \varphi_B,\\ &> \varphi_B\end{cases}.\label{AB_u_phi_relation}
   \end{align}
   A straightforward calculation exploits the relation between $\varphi_A$, $\varphi_B$, and $1/s$, that is,
   \begin{align}
   	\begin{cases}
   	\varphi_A - \dfrac{1}{s} &= \dfrac{1 - s^2c^2}{s(c\sqrt{e_p}s - 1)},\\[1em]
   	\varphi_B - \dfrac{1}{s} &= \dfrac{s^2c^2 - 1}{s(c\sqrt{e_p}s + 1)},
   	\end{cases}
   \Rightarrow\quad
   \begin{cases} 
   &\varphi_A(s) = \varphi_B(s) = c,\;s = 1/c,\\[0.5em] &\varphi_A(s) < \dfrac{1}{s} < \varphi_B(s),\;s\in(1/c,\infty)\end{cases}.
   	\label{phi_s_relation}
   \end{align}
   We hence can conclude that $g > 0$ for $s \in (0,1/c]$ and $g \leq 0$ may only occur for $s \in (1/c,\infty)$.
   The discussion of $g$ guides the way for the proof of the main theorem and we will split the proof into the cases for $s \in (0,1/c]$ and $s \in (1/c,\infty)$ due to \eqref{phi_s_relation}.
   \subsection{The case $s \in (0,1/c]$}\label{small_s_prf}
   \begin{lemma}\label{lem:no_zeros}
Let $(u_0, p_0, \eta_0)$ be an initial datum with $p_0 > 0$ and $0 < |u_0| < c$.  
Then \eqref{systemODE} admits a unique solution for $s \in (0,1/c]$, which furthermore satisfies
\[
p(s) > 0, \qquad |u(s)| < c \quad \text{for all } s \in (0,1/c].
\]
\end{lemma}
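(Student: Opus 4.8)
The plan is to freeze the entropy, reduce \eqref{systemODE} on $(0,1/c]$ to a planar Cauchy problem in $(u,p)$, solve it locally by Picard--Lindel\"of, and then propagate $p>0$ and $|u|<c$ by a continuation argument. On $(0,1/c]$ and while $|u|<c$ one has $us\le|u|/c<1$, so $s(us-1)\neq0$ and the third equation of \eqref{systemODE} forces $\eta(s)\equiv\eta_0$. With $\eta$ fixed, \eqref{syngeg} makes $\gamma$, $e$ and $e_p=3+\sigma_i$ smooth functions of $p>0$ (the relation $\gamma_p=\gamma/(p\,g_i)$ is admissible since $g_i<0$), and \eqref{systemODE} becomes $u'=F(s,u,p)$, $p'=G(s,u,p)$ with $F,G$ real-analytic on $\{\,s\in(-\varepsilon,1/c+\varepsilon),\ |u|<c,\ p>0\,\}$ --- this is precisely where $g>0$ for $s\le1/c$, \eqref{ineq:g_pos_v1}, is used. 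Picard--Lindel\"of gives a unique maximal solution on $[0,s^{*})$, and it remains to show $s^{*}>1/c$ with the stated properties. The sign structure is immediate: $F$ carries the factor $u$, so $\mathrm{sgn}\,u\equiv\mathrm{sgn}\,u_0$; $G$ carries $u(us-1)$ with $us-1<0$, so $p$ is monotone. For $d=1$ all right-hand sides vanish and the solution is the constant datum, so we may assume $d\in\{2,3\}$. If $u_0<0$: $p$ is non-decreasing, $\gamma$ non-increasing, $e_p\le e_p(\gamma_0)$ (using $e_{pp}<0\Leftrightarrow\sigma_i'>0$, cf.\ \eqref{pepp2}); since $g\ge2c^2(us-1)^2\ge2c^2$ (here $us<0$) and $(e+p)'=(e_p+1)p'$, a Gr\"onwall estimate bounds $e+p$, hence $p$, on $[0,1/c]$, while $c^2-u^2\ge c^2-u_0^2>0$; the case $u_0<0$ is then complete with $s^{*}>1/c$.

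\textbf{The case $u_0\in(0,c)$: a priori bounds.} Here $u>0$, $p\le p_0$, $e\le e_0$. I would prove two a priori bounds on $[0,\min(s^{*},1/c))$. First, from $g\ge c^2(e_p-1)(us-1)^2\ge2c^2(1-cs)^2$ and $w'=-(d-1)u\,w\,(c+u)(u-sc^2)/g$ with $w:=c-u$: on $\{u>sc^2\}$ one has $0<u-sc^2<c(1-cs)$, hence $|w'/w|\le(d-1)c/(1-cs)$ and so $c-u(s)\ge(c-u_0)(1-cs)^{d-1}$; on the complementary set $u<sc^2<c$ and $w$ is increasing; thus $|u|<c$ on $[0,1/c)$. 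Second, since $(e+p)/p=4+\gamma h_i$ by \eqref{syngeg}, $|p'|/p\le(d-1)u(4+\gamma h_i)/[(e_p-1)(1-us)]\le(d-1)c\,M/(1-us)$ with $M:=\sup_{\gamma>0}(4+\gamma h_i(\gamma))/(e_p(\gamma)-1)<\infty$ (a uniform-in-$\gamma$ Bessel estimate, established with the Appendix material); consequently, whenever $u\le c-\eta$ on a subinterval one has $1-us\ge\eta/c$ there and $p$ stays bounded below. Together these exclude every escape at $s^{*}\le1/c$ except the single degenerate possibility $s^{*}=1/c$ with $u(s)\uparrow c$ (and correspondingly $p\downarrow0$, $\gamma\uparrow\infty$).

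\textbf{The key monotone quantity.} Excluding that possibility is the crux, and it is here that $e_p>3$ --- equivalently the maximal characteristic speed $\tilde\lambda<1/\sqrt3<1$, cf.\ \eqref{LLLL} --- must be used. Put $\psi(s):=u(s)-sc^2$, so $\psi(0)=u_0>0$. One checks $u'(s)<c^2$ on $(0,1/c)$: it is $\le0$ where $u-sc^2\le0$; and where $u-sc^2>0$ it equals, in the variables $\mu=u/c$, $x=cs$, the quantity $c^2\,(d-1)\mu(1-\mu^2)(\mu-x)\big/\big[(e_p-1)(\mu-x)^2+e_p(1-\mu^2)(1-x^2)\big]$, where the denominator identity comes from $(1-\mu x)^2-(\mu-x)^2=(1-\mu^2)(1-x^2)$; bounding the numerator by $(d-1)(1-\mu^2)(1-x)$ and the denominator below by $e_p(1-\mu^2)(1-x^2)>3(1-\mu^2)(1-x)$ yields $u'/c^2<(d-1)/3\le2/3$. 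Hence $\psi$ is strictly decreasing from $u_0$. If $\psi(s_1)=0$ for some $s_1\in(0,1/c)$, then for $s\in[s_1,1/c)$ and a fixed $s_2\in(s_1,1/c)$ one has $u(s)\le sc^2+\psi(s_2)<c$, so $u$ is bounded away from $c$; by the second a priori bound $p$ is bounded below, $\lim_{s\to1/c}(u,p)$ exists with $u(1/c)<c$ and $p(1/c)>0$, and the lemma follows.

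\textbf{The main obstacle.} It remains to exclude the scenario in which $\psi>0$ on all of $[0,1/c)$: then $u>sc^2$ there, $u$ is increasing and $u\uparrow c$ as $s\to1/c$. In this scenario $w=c-u<c(1-cs)=:c\delta$, which upgrades the bound on $u'$ (using $c^2-u^2\le2c(c-u)$ and $g\ge2c^2\delta^2+c(c-u)\delta$) to $\psi'\le-\kappa_d\,c^2$ for a fixed $\kappa_d>0$; simultaneously, along such a branch the $p$-equation --- now purged of the unbounded $e_p$ through the ratio $M$ --- makes $|p'|$ non-integrable as $s\to1/c$, so $p$ cannot remain positive up to $s=1/c$. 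Carrying these two facts into a contradiction (a careful coupled estimate near $s=1/c$, again leaning on $e_p>3$) shows that $\psi$ must in fact vanish strictly before $s=1/c$, reducing the scenario to the preceding paragraph and completing the proof. I expect this last exclusion --- controlling the solution at the light-cone/sonic degeneracy $(s,u)\to(1/c,c)$, $g\to0$ --- to be the only genuinely delicate point; once $g>0$ on $(0,1/c]$ and $e_{pp}<0$ (equivalently $\sigma_i'>0$, $e_p>3$) are in hand, everything else is a routine continuation argument.
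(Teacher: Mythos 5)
Your proposal takes a genuinely different route from the paper's. The paper's proof consists of three short contradiction arguments: assume the first time one of $|u|=c$, $u=0$, $p=0$ occurs is at some $s^\ast\in(0,1/c]$, rewrite $\eqref{systemODE}_1$ or $\eqref{systemODE}_2$ in separable form, integrate on $(s^\ast-\varepsilon,s^\ast)$, and observe that one side diverges while the other stays bounded (for the pressure step, the divergence comes from $\gamma\Phi_i'\sim -\text{const}/\gamma$, cf.~\eqref{asymptotic_1}). You instead set up a continuation argument with explicit a priori Gr\"onwall-type bounds, and introduce the quantity $\psi(s)=u(s)-sc^2$ together with the uniform bound $u'/c^2<(d-1)/3$ obtained by re-expressing the denominator via $(\mu x-1)^2-(\mu-x)^2=(1-\mu^2)(1-x^2)$ and invoking $e_p>3$. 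That observation is correct and gives a clean structural picture of the positive-velocity case, but the overall argument is considerably longer than the paper's.

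There is, however, a genuine gap, which you flag yourself. The bound $\psi'<-c^2/3$ only forces $\psi$ to vanish before $s=1/c$ when $u_0<c/3$; for larger $u_0$ the inequality $\psi(1/c)\le u_0-c/3\ge 0$ is consistent with $u(1/c)=c$. Likewise the estimate $c-u(s)\ge(c-u_0)(1-cs)^{d-1}$ collapses to $0$ at $s=1/c$, and the bound $|p'|/p\lesssim 1/(1-us)$ does \emph{not} by itself make $p$ non-positive, since $\int ds/(1-us)$ may well be finite along the putative escape branch. The sketch "carrying these two facts into a contradiction'' is precisely the step you say you expect to be delicate, and as written it is not carried out; the proposal is therefore incomplete at exactly the point that matters.

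A clean way to close it --- close in spirit to what the paper does in Lemma~\ref{lem:case_1} --- is the Riemann-invariant comparison. On $\{u>0,\ u>sc^2,\ s<1/c\}$, dividing $\eqref{systemODE}_2$ by $\eqref{systemODE}_1$ gives
$\dfrac{-\dd p}{c(e+p)}=\dfrac{c(1-us)}{(c^2-u^2)(u-sc^2)}\,\dd u$,
and $(u-sc^2)^2-c^2(1-us)^2=(1-c^2s^2)(u^2-c^2)<0$ yields $c(1-us)/(u-sc^2)>1$, so
$\dfrac{-\dd p}{c(e+p)}>\dfrac{\dd u}{c^2-u^2}$.
Integrating and using that $\int_0^{p_0}\dfrac{\dd p}{c(e+p)}$ is finite (by $\dd p=p_\gamma\,\dd\gamma$ and the decay \eqref{asymptotic_2} of the integrand as $\gamma\to\infty$) shows $\int_{u_0}^{u(s)}\dfrac{\dd u}{c^2-u^2}$ is uniformly bounded, hence $u$ is bounded away from $c$ on $[0,1/c)$; combined with your second a priori bound this excludes the escape branch entirely. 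As a minor remark, your assertion $\operatorname{sgn}u\equiv\operatorname{sgn}u_0$ should also be justified (uniqueness against the constant-in-$s$ solution through $(s_1,0,p(s_1),\eta_0)$, or the paper's integral argument for $\int\dd u/u$), though this is routine.
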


   \begin{proof}
   	Due to the differentiability of the RHS and because of \eqref{ineq:g_pos_v1}, for the initial datum satisfying $0 < |u_0| < c$ and $s < 1/c$, we have a unique local solution of the ODE system \eqref{systemODE}.
   	We further want to show that the solution satisfies $p > 0$ and $0 < |u| < c$ for $s \in (0,1/c]$.
   	This will be done using a contradiction argument.
   	
    Assume that there exists a point $ s^\ast\in (0,1/c]$ such that $|u(s)| < c$ for all $s\in(0, s^\ast)$ and $|u(s^\ast)| = c$.
   	Then,  from $\eqref{systemODE}_1$, we  integrate on $(s^\ast - \varepsilon,s^\ast)$ for $\varepsilon > 0$ to obtain
   	\begin{align*}
   		\frac{\dd u}{\dd s} &= \frac{(d-1)u(c^2 - u^2)(u - sc^2)}{g}\\
   		\Leftrightarrow\quad\frac{1}{c^2 - u^2}\frac{\dd u}{\dd s} &= \frac{(d-1)u(u - sc^2)}{g}\\
   		\Leftrightarrow\quad\int_{u(s^\ast-\varepsilon)}^{u(s^\ast)}\frac{\dd u}{c^2 - u^2} &= \int_{s^\ast - \varepsilon}^{s^\ast}\frac{(d-1)u(u - sc^2)}{g}\,\dd s.
   	\end{align*}
   	The LHS becomes infinite, whereas the RHS stays bounded and thus we have a contradiction.\\
   	In an analogous way, we assume that there exists a $s^\ast\in (0, 1/c]$ such that the  local solution satisfies $|u(s)| > 0$ for all $s\in (0,s^\ast)$ and   $u(s^\ast) = 0$.
   	Then, by integration on $(s^\ast - \varepsilon,s^\ast)$ for $\varepsilon > 0$, we obtain  from $\eqref{systemODE}_1$ that
   	\begin{align*}
   		\frac{\dd u}{\dd s} &= \frac{(d-1)u(c^2 - u^2)(u - sc^2)}{g}\\
   		\Leftrightarrow\quad\frac{1}{u}\frac{\dd u}{\dd s} &= \frac{(d-1)(c^2 - u^2)(u - sc^2)}{g}\\
   		\Leftrightarrow\quad\int_{u(s^\ast-\varepsilon)}^{u(s^\ast)}\frac{\dd u}{u} &= \int_{s^\ast - \varepsilon}^{s^\ast}\frac{(d-1)(c^2 - u^2)(u - sc^2)}{g}\,\dd s.            %
   	\end{align*}
   	Again, the LHS becomes infinite, whereas the RHS stays bounded and thus we have a contradiction.\\
   	With similar arguments, we show the positivity of the pressure. Let us assume that we have a local solution on $(0, 1/c]$, and assume there exists a $s^\ast \in (0,1/c]$ such that $p(s^\ast) = 0$.
   	Then we obtain by integration on $(s^\ast - \varepsilon,s^\ast)$ for $\varepsilon > 0$ from $\eqref{systemODE}_2$
   	\begin{align*}
   		\frac{\dd p}{\dd s} &= \frac{(d-1)uc^2(us - 1)(e + p)}{g}\\
   		\Leftrightarrow\quad\frac{1}{e + p}\frac{\dd p}{\dd s} &= \frac{(d-1)uc^2(us - 1)}{g}\\
   		\Leftrightarrow\quad\int_{p(s^\ast-\varepsilon)}^{p(s^\ast)}\frac{g\dd p}{e+p} &= \int_{s^\ast - \varepsilon}^{s^\ast}(d-1)uc^2(us - 1)\,\dd s.
   	\end{align*}
   Obviously, the integration on the RHS is finite. Now we estimate the integration on the LHS.
   Note that  for $|u| < c$ and $s \in (0, s^\ast]$, 
   \[
   (cs^\ast - 1)^2 < (us-1)^2 < (cs^\ast + 1)^2,
   \] 
	which together with \eqref{ineq:g_pos_v1} implies that
	\begin{align*}
		\frac{2}{3}c^2(cs^\ast - 1)^2e_p < \frac{2}{3}c^2e_p(us - 1)^2 < g < c^2e_p(us - 1)^2 < c^2(cs^\ast + 1)^2e_p
	\end{align*}
   Thus $g$ is bounded from above and below by $e_p$ with appropriate constants and we hence study the integrand on the LHS with $e_p$ replacing $g$.
   According to \eqref{syngeg},  \eqref{dgamma1}, \eqref{ep_gamma_phi_rel}, and \eqref{17}, one has $\dd p=p_{\gamma}\dd \gamma$ and
   \begin{equation}\label{e_pddgammap}
		\frac{e_p}{e + p}p_{\gamma} = \frac{\gamma^2 \Phi'_i(1+\gamma\Phi_i)}{\gamma^2 \Phi'_i -1}	\frac{\gamma^2 \Phi'_i -1}{\gamma(1+\gamma\Phi_i)}
		= \gamma \Phi'_i = \gamma \left(h^2_i - 1\right) + (2i + 1) h_i - \frac{3}{\gamma}.
   \end{equation}
   	Since $\gamma \to \infty$ as $p \to 0$ the integrand on the left basically behaves as $-1/\gamma$ and we refer to \eqref{asymptotic_1} for a detailed asymptotic expansion.
   	This together with \eqref{e_pddgammap} implies that the integration on the LHS becomes minus infinity and thus leads to contradiction since the RHS stays bounded. This conclude the proof.
   \end{proof}
   Note that for $u_0 = 0$, the solution is constant in time and given by $(0,p_0,\eta_0)$. 
   \subsection{The case $s \in (1/c,\infty)$}\label{sec:inside_lightcone}
   The main outcome of the previous discussion for the case $s \in (0,1/c]$ is that $0 < |u| < c$ for $0 < |u_0| < c$.
   Thus we have
   \[
   u_0 \in \begin{cases} (0,c),\\ (-c,0)\end{cases}\quad\Leftrightarrow\quad u\left(\frac{1}{c}\right)\in\begin{cases} (0,c),\\ (-c,0)\end{cases}.
   \]
   The following lemma is important as it suggests to distinguish the cases of positive and negative initial velocity for the remainder of the main proof.
   \begin{lemma}\label{lem:A_neg}
	Assume $u > 0$ and $p > 0$ for $s \in (1/c,\infty)$.  
	Then $u < \varphi_A$, and hence $g > 0$ on $(1/c,\infty)$.
\end{lemma}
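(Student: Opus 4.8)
The plan is to prove, by contradiction, that the factor $A$ in the factorization $g=A\cdot B$ of \eqref{def:g_A} stays strictly negative on all of $(1/c,\infty)$. This suffices: $A<0$ is exactly $u<\varphi_A$ by \eqref{AB_u_phi_relation}, and once $A\le 0$ holds \eqref{cond_g_1} gives $B<0$, hence $g=AB>0$. I would begin at the light-cone vertex $s=1/c$. By Lemma~\ref{lem:no_zeros} the solution reaches $s=1/c$ with $|u(1/c)|<c$ and $p(1/c)>0$, so $e_p$ is well-defined there, with $e_p>3$ for every $\gamma>0$; evaluating \eqref{def:g_A} at $s=1/c$ (where $\varphi_A=\varphi_B=c$ by \eqref{phi_s_relation}) gives $A(1/c)=(\sqrt{e_p}-1)\,(u(1/c)-c)<0$ and $B(1/c)=(\sqrt{e_p}+1)\,(u(1/c)-c)<0$, so $g(1/c)>0$ and $u(1/c)<\varphi_A(1/c)$.

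Now suppose $A$ is not negative throughout $(1/c,\infty)$ and set $s^{*}:=\inf\{s\in(1/c,\infty):A(s)\ge0\}$. By continuity of the solution and of $A$, one has $s^{*}>1/c$, $A<0$ on $(1/c,s^{*})$, and $A(s^{*})=0$, i.e.\ $u(s^{*})=\varphi_A(s^{*})$. On $(1/c,s^{*})$, $A<0$ means $u<\varphi_A$, and then \eqref{phi_s_relation} gives the chain $u<\varphi_A<1/s<\varphi_B$, so $B<0$ and $g=AB>0$ there. At $s^{*}$ we have $u(s^{*})=\varphi_A(s^{*})<1/s^{*}<c$ by \eqref{phi_s_relation}, while $u(s^{*})>0$ by hypothesis; hence $0<u(s^{*})<c$. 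Therefore $c^{2}-u(s^{*})^{2}>0$, and since $s^{*}>1/c$ forces $s^{*}c^{2}>c>1/s^{*}>u(s^{*})$, also $u(s^{*})-s^{*}c^{2}<0$. Consequently, for $d\ge2$ the numerator $(d-1)\,u\,(c^{2}-u^{2})\,(u-sc^{2})$ of the $u$-equation in \eqref{systemODE} tends to a strictly negative limit as $s\to s^{*-}$, while $g>0$ on $(1/c,s^{*})$ with $g(s)\to g(s^{*})=0$. Hence $\dd u/\dd s\to-\infty$ as $s\to s^{*-}$, so $u$ cannot be continued as a $C^{1}$ solution of \eqref{systemODE} up to $s^{*}$, contradicting the assumed existence on all of $(1/c,\infty)$ of a solution with $u>0$ and $p>0$. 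Thus $A<0$, i.e.\ $u<\varphi_A$, on $(1/c,\infty)$, and $g=AB>0$ follows from \eqref{cond_g_1}.

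The crucial step, and the one needing care, is the non-vanishing of that velocity-numerator at the candidate singularity $s^{*}$: it is exactly what prevents $g(s^{*})=0$ from being a removable singularity through which the profile could pass. It relies on $0<u(s^{*})<c$, which follows from the ordering $\varphi_A<1/s<\varphi_B$ in \eqref{phi_s_relation} — this is what guarantees that a first contact of $u$ with the singular region $\{g\le0\}$ must be with $\varphi_A$ rather than $\varphi_B$, and that $\varphi_A(s^{*})<1/s^{*}<c$ — together with the hypothesis $u>0$ and with $d\ge2$. (If $d=1$ the source terms in \eqref{systemODE} vanish identically and the profile is a constant state; the radially symmetric setting of interest is $d\in\{2,3\}$.) Equivalently, and in the spirit of the proof of Lemma~\ref{lem:no_zeros}, one may integrate $\frac{1}{c^{2}-u^{2}}\frac{\dd u}{\dd s}=\frac{(d-1)u(u-sc^{2})}{g}$ over $[\sigma,s^{*}]$ and observe that the left-hand side converges as $\sigma\to s^{*-}$ while the right-hand side diverges to $-\infty$.
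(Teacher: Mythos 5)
Your proof is correct, and it reaches the conclusion by a genuinely simpler route than the paper's. Both arguments set up the same first-contact configuration: a point $s^{*}\in(1/c,\infty)$ with $u<\varphi_A$ to its left, $u(s^{*})=\varphi_A(s^{*})$, hence $A(s^{*})=0$ and $g(s^{*})=0$. From here the paper (Appendix~\ref{app:aux_res_u_pos}) differentiates $\varphi_A$ along the solution, forms the difference $u'-\varphi_A'$, simplifies it at $s^{*}$ using $A(s^{*})=0$, and invokes the constitutive inequality $2e_p(e_p-1)-e_{pp}(e+p)>0$ from \cite{Ruggeri-Xiao-Zhao-ARMA-2021} to show that $u'(s)-\varphi_A'(s)\to-\infty$, in contradiction with the first-contact requirement. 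You bypass the comparison with $\varphi_A'$ entirely: from $0<u(s^{*})=\varphi_A(s^{*})<1/s^{*}<c$ and $u(s^{*})<s^{*}c^{2}$ you observe that the numerator $(d-1)u(c^{2}-u^{2})(u-sc^{2})$ of $\eqref{systemODE}_1$ has a strictly negative limit while $g\to 0^{+}$, so that $u'\to-\infty$ directly, which already kills $C^{1}$ continuability on $(1/c,\infty)$. This is both shorter and less model-dependent, since it uses only $e_p>3$ (through \eqref{phi_s_relation}) and not the second-order inequality on $e_{pp}$; what the paper buys with the extra computation is a quantitative comparison of $u'$ with $\varphi_A'$ that is re-used in Lemma~\ref{lem:blowup}. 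One caveat on your closing ``equivalently'' remark: the asserted divergence of $\int_{\sigma}^{s^{*}}(d-1)u(u-sc^{2})/g\,\dd s$ is not automatic — a priori it depends on how fast $g$ vanishes at $s^{*}$, which you have not estimated, and the integrals on the two sides of your identity are in fact equal by the ODE so no contradiction can be extracted from that comparison alone. Your main argument does not rely on this integral estimate, so the proof as a whole is sound.
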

   For the sake of brevity, we provide the proof of Lem. \ref{lem:A_neg} in the appendix.
   Considering the previous result, we split the following part of the proof into two cases $u_0 \in (0,c)$ and $u_0 \in (-c,0)$.
   \subsection{The case $u_0 \in (0,c)$}\label{init_pos_u_prf}
   \begin{enumerate}[(a)]
   	\item Case I: Solution including vacuum for large $u_0$, i.e., close to $c$.
   	\item Case II: Solution becoming stationary for small $u_0$, i.e., close to zero.
   	\item Case III: Solution for intermediate region.
   \end{enumerate}
   This particular classification is due to the observation that for $u_0 = 0$, the solution is stationary. Moreover, it is shown in \cite{Ruggeri-Xiao-Zhao-ARMA-2021} that expansion into vacuum by rarefaction waves is possible if the difference of the initial velocities exceeds a certain threshold.\newline
   ~\\
   \underline{\textbf{Case I:}} Solution including vacuum for large $u_0$.
   \begin{lemma}\label{lem:case_1}
	Let $u_0 > \overline{u} > 0$, where $\overline{u}$ is given by \eqref{def_velo_bound}.  
	Then there exists $s^\ast > 1/c$ such that
	\[
	0 < u(s) < \varphi_A(s) \quad \text{for } s \in (1/c,s^\ast),
	\]
	and moreover
	\[
	u(s^\ast) = \varphi_A(s^\ast) = \frac{1}{s^\ast}, 
	\qquad p(s^\ast) = 0.
	\]
\end{lemma}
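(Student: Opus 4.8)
The plan is to run the solution of \eqref{systemODE} forward from $s=1/c$, starting from the value $(u(1/c),p(1/c),\eta_0)$ supplied by Lemma~\ref{lem:no_zeros}, for which $0<u(1/c)<c$, $0<p(1/c)<p_0$, and $\eta\equiv\eta_0$ for all $s\ge 1/c$ (since $us\neq 1$). Local existence follows from $g(1/c)>0$ by \eqref{ineq:g_pos_v1}, and on the set $\{0<u<c,\ p>0\}$ one has $u<\varphi_A$ by the argument of Lemma~\ref{lem:A_neg} (which keeps $u<\varphi_A$ as long as $u,p>0$), hence $g=AB>0$ with $A<0$, $B<0$. A sign inspection of \eqref{systemODE} then gives, for $d\ge 2$, $u-sc^2<0$ and — using $u<\varphi_A<1/s$ from \eqref{phi_s_relation} — $us-1<0$, so $\dd u/\dd s<0$ and $\dd p/\dd s<0$: both $u$ and $p$ are strictly decreasing. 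By the continuation criterion the solution extends to a maximal interval $(1/c,s^\ast)$, $s^\ast\le\infty$, leaving $\{0<u<c,\ p>0\}$ only through $u\to 0$, $p\to 0$, or $g\to 0$, and by monotonicity the limits $u^\ast,p^\ast$ exist.

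Next I would compare with the rarefaction (Riemann invariant) curve to keep $u$ uniformly positive. With $p$ as the variable (admissible since $p$ strictly decreases), one has $\dd u/\dd p=(c^2-u^2)(sc^2-u)/\big(c^2(1-us)(e+p)\big)>0$, and $A<0$ is exactly $(sc^2-u)/(1-us)<c\sqrt{e_p}$; hence, with $\Theta(u):=\tfrac12\ln\tfrac{c+u}{c-u}$ and $\psi(p):=\sqrt{e_p(p,\eta_0)}/(e(p,\eta_0)+p)$, one gets $\tfrac{\dd}{\dd p}\Theta(u)<\psi(p)$ along the solution (the same inequality holds on $(0,1/c]$ after checking the sign of $sc^2-u$, using $e_p>3$). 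The integral $\int_0^{p_0}\psi$ converges by the Synge/Bessel asymptotics of the Appendix, and $\overline u$ of \eqref{def_velo_bound} is precisely the threshold below which a complete rarefaction to vacuum would drive the velocity through zero; integrating the differential inequality downward from $p_0$ then produces a constant $\underline u>0$ with $u(s)\ge\underline u$ on $(1/c,s^\ast)$. This excludes termination by $u\to 0$ (and $u\to c$ is impossible since $u$ decreases), and it forces $s^\ast<\infty$, for otherwise $u(s)\ge\underline u>1/s>\varphi_A(s)$ for $s>1/\underline u$, contradicting $u<\varphi_A$. Thus the maximal interval must terminate through $p\to 0$ or through $g\to 0$ with $p^\ast>0$.

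The hard part is ruling out the second possibility — the solution stalling on the sonic set $g=0$ at positive pressure — and here the Appendix inequality $e_{pp}<0$ is decisive. If $g\to 0$ and $p^\ast>0$ as $s\to s^\ast$, then $u^\ast=\varphi_A(s^\ast)$ (not $\varphi_B$, since $u<\varphi_A<1/s<\varphi_B$), with $0<\underline u\le u^\ast<1/s^\ast$, so the numerators of $\dd u/\dd s$ and $\dd p/\dd s$ tend to nonzero negative limits while $g\to 0^+$, forcing $\dd u/\dd s\to-\infty$ and $\dd p/\dd s\to-\infty$. Differentiating $g=g(u,p,s)$ along the solution, $\dd g/\dd s=g_u\,\dd u/\dd s+g_p\,\dd p/\dd s+g_s$ with $g_u\to(c\sqrt{e_p}\,s-1)\,B<0$ and $g_p=c^2e_{pp}(us-1)^2<0$ at $s^\ast$, and $g_s$ bounded; hence both products tend to $+\infty$, so $\dd g/\dd s\to+\infty$ near $s^\ast$, which makes $g$ increasing on a left neighbourhood of $s^\ast$ — contradicting $g>0=g(s^\ast)$ there. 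Therefore $p^\ast=0$. Finally one identifies the endpoint: $p\to 0$ forces $e_p\to\infty$ (Appendix), so $\varphi_A(s)=\dfrac{1-sc/\sqrt{e_p}}{\,s-1/(c\sqrt{e_p})\,}\to 1/s^\ast$, i.e.\ $\varphi_A(s^\ast)=1/s^\ast$ and $u^\ast\le 1/s^\ast$; while $u^\ast<1/s^\ast$ is impossible, since then $|us-1|$ stays bounded below near $s^\ast$, so $g\ge c^2e_p(us-1)^2-(u-sc^2)^2\to+\infty$, and the adiabatic bound $e+p\le C p\,e_p$ near vacuum (Appendix) gives $|\dd p/\dd s|\le C' p$, whence Gr\"onwall keeps $p$ strictly positive on every finite interval. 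Hence $u(s^\ast)=\varphi_A(s^\ast)=1/s^\ast$, $p(s^\ast)=0$, and $0<\underline u\le u<\varphi_A$ on $(1/c,s^\ast)$, which is the assertion.
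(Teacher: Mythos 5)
Your proposal follows essentially the same strategy as the paper's proof: establish monotonicity of $u$ and $p$ together with $u<\varphi_A$, compare the rapidity $\Theta(u)$ against the acoustic integral to conclude that the pressure must vanish before the velocity, and then identify the terminal value $u(s^\ast)=1/s^\ast$. The main structural steps are correct. Two points are worth flagging.

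First, the step you call "the hard part" — ruling out $g\to 0$ at positive pressure by differentiating $g$ along the solution and using $e_{pp}<0$ — is in fact already covered by Lemma~\ref{lem:A_neg}, which you invoke at the start. Indeed $g=AB$, $A=(c\sqrt{e_p}\,s-1)(u-\varphi_A)$, so $g\to 0$ with $u,p>0$ forces $u\to\varphi_A$, and the proof of Lemma~\ref{lem:A_neg} in the appendix is precisely the derivative argument you redo (expressed in terms of $u-\varphi_A$ rather than $g$, and using the combination $2e_p(e_p-1)-e_{pp}(e+p)>0$ rather than $e_{pp}<0$ and $g_u<0$ separately). It is a correct argument, but not new work.

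Second, your route to $u(s^\ast)=1/s^\ast$ differs from the paper's. The paper integrates $\eqref{systemODE}_2$ near $s^\ast$, obtains $\int_{p(s^\ast-\varepsilon)}^{0}\tfrac{c^2e_p}{e+p}\,\dd p = c^2\int_{\gamma(s^\ast-\varepsilon)}^{\infty}\gamma\Phi_i'\,\dd\gamma=-\infty$, and compares with the finite lower bound $\int 2c^2(d-1)u/(us-1)\,\dd s$ obtained under the assumption $u(s^\ast)<1/s^\ast$. Your Gr\"onwall alternative (bound $|\dd p/\dd s|\le C'p$ using $g\gtrsim c^2e_p(us-1)^2$ and $e+p\lesssim p\,e_p$) is also valid, but it quietly uses the estimate $e+p\le C\,p\,e_p$ near vacuum. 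This does follow from $e+p=p\chi_i$ and $e_p=\tfrac{\gamma^2\Phi_i'}{\gamma^2\Phi_i'-1}\chi_i$ together with the asymptotics $\gamma^2\Phi_i'-1=g_i\to-\tfrac{5}{2}$ (mono) or $-\tfrac72$ (dia), giving $\tfrac{e+p}{p\,e_p}\to\tfrac{g_i}{g_i+1}$ bounded — but it is not recorded in the Appendix and should be stated explicitly. Finally, the remark that the comparison $\Theta'(p)<\psi(p)$ holds on $(0,1/c]$ is true, but on $(0,\bar{s})$ the reason is simply that $u'(s)>0$, $p'(s)<0$ give $\dd\Theta/\dd p<0<\psi$ trivially; the invocation of $e_p>3$ there is a red herring — the real sign analysis is the one the paper isolates in Lemma~\ref{lem:u_eq_s}.
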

   \begin{proof}
    From the proof of Lemma \ref{lem:A_neg}, we have $0 < g$ and $0 < u(s) < \varphi_A(s)$ for $s\in(1/c,s^\ast)$. Due to \eqref{phi_s_relation}, we can conclude that $us - 1 < 0$ for $s\in(1/c,s^\ast)$. On the other hand, for $s > 1/c$, we have
    \[
      u - sc^2 < u - c < 0.
    \]
    Thus we can directly verify that $u(s)$ and $p(s)$ are monotonically decreasing for ${s\in(1/c,s^\ast)}$.
    Moreover, we can conclude from $u < \varphi_A$ that $A < 0$ and hence we yield
    \begin{align}
        0 > A = c\sqrt{e_p}(us - 1) - (u - sc^2)\quad\Leftrightarrow\quad c\sqrt{e_p} > \frac{u - sc^2}{us - 1}.\label{ineq:ep_u_case1}
    \end{align}
    From $\eqref{systemODE}_1$ and $\eqref{systemODE}_2$, we obtain
    \[
      \frac{1}{c^2(e + p)}p'(s) = \frac{us - 1}{(c^2 - u^2)(u - sc^2)}u'(s)
    \]
    and thus together with \eqref{ineq:ep_u_case1} we yield
    \begin{align}
    	\frac{\sqrt{e_p}}{c(e + p)}p'(s) = \frac{c\sqrt{e_p}(us - 1)}{(c^2 - u^2)(u - sc^2)}u'(s) < \frac{1}{c^2 - u^2}u'(s)\label{ineq:dp_du_vac}
    \end{align}
   due to $u'(s) < 0$. Let $\overline{s} \in (0,1/c)$ be defined by Lemma \ref{lem:u_eq_s} and hence
    \[
      u(s) - sc^2
      \begin{cases}
      &\geq0,\quad s \in (0,\overline{s}],\\
      &< 0,\quad s \in (\overline{s},\infty)
      \end{cases}.
    \]
    Further let $\sigma \in (\overline{s},s^\ast)$ and therefore, due to the monotonicity of the functions, $p'(s) < 0$ on $(0,s^\ast)$, $u'(s) > 0$ on  $(0,\overline{s})$ and $u'(s) < 0$ on $(\overline{s},s^\ast)$ we get
    \begin{align*}
        %
            %
            &\int_0^{s^\ast} \frac{\sqrt{e_p}}{c(e + p)}p'(s)\dd s <
            \int_0^\sigma \frac{\sqrt{e_p}}{c(e + p)}p'(s)\,\dd s < \int^\sigma_{\overline{s}} \frac{\sqrt{e_p}}{c(e + p)}p'(s)\,\dd s\\
            \stackrel{\eqref{ineq:dp_du_vac}}{<} &\int^\sigma_{\overline{s}} \frac{u'(s)}{c^2 - u^2}\,\dd s <  \int^\sigma_{\overline{s}} \frac{u'(s)}{c^2 - u^2}\,\dd s + \underbrace{\int_0^{\overline{s}} \frac{u'(s)}{c^2 - u^2}\,\dd s}_{>0} = \int^\sigma_0 \frac{u'(s)}{c^2 - u^2}\,\dd s.
            %
        %
    \end{align*}
    In particular we have
    \begin{align}
    	\int_{p_0}^{p(\sigma)} \frac{\sqrt{e_p}}{c(e + p)}\,\dd p <\int^{u(\sigma)}_{u_0} \frac{1}{c^2 - u^2}\,\dd u. \label{ineq:p_vac_int}
    \end{align}
    We want to comment on the integral
    \begin{align*}
        \mathcal{I}_p := \int_{0}^{p_0} \frac{\sqrt{e_p}}{c(e + p)}\dd p
    \end{align*}
    and will show that $\mathcal{I}_p$ is also bounded from above.
   Firstly, by the aid of \eqref{syngeg}, \eqref{dgamma1}, and \eqref{ep_gamma_phi_rel} we get
    \begin{align*}
        \frac{\sqrt{e_p}}{e + p} &= \frac{1}{p(\gamma\Phi_i(\gamma) + 1)}\left[\frac{\gamma^2\Phi'_i(\gamma)(\gamma\Phi_i(\gamma) + 1)}{\gamma^2\Phi'_i(\gamma) - 1}\right]^{\frac{1}{2}}\notag\\
        &= \frac{\gamma_p(\gamma^2\Phi'_i(\gamma) - 1)}{\gamma \sqrt{\gamma\Phi_i(\gamma) + 1}}\left[\frac{\gamma^2\Phi'_i(\gamma)}{\gamma^2\Phi'_i(\gamma) - 1}\right]^{\frac{1}{2}} = -\gamma_p\left[\frac{(\gamma^2\Phi'_i(\gamma) - 1)\Phi_i'(\gamma)}{\gamma\Phi_i(\gamma) + 1}\right]^{\frac{1}{2}}
    \end{align*}
    since $\gamma^2\Phi'_i(\gamma) - 1<0$ by \eqref{pg} and \eqref{eppd-den}.
    Thus we obtain with $\gamma_p < 0$ and \eqref{asymptotic_3} that
    \begin{align*}
        0 &< \mathcal{I}_p = \int_{0}^{p_0} \frac{\sqrt{e_p}}{c(e + p)}\dd p
        = -\frac{1}{c}\int_{0}^{p_0} \left[\frac{(\gamma^2\Phi'_i(\gamma) - 1)\Phi_i'(\gamma)}{\gamma\Phi_i(\gamma) + 1}\right]^{\frac{1}{2}}\gamma_p\dd p\notag\\
        &= \frac{1}{c}\int_{\gamma_0}^{\infty} \left[\frac{(\gamma^2\Phi'_i(\gamma) - 1)\Phi_i'(\gamma)}{\gamma\Phi_i(\gamma) + 1}\right]^{\frac{1}{2}}\dd\gamma< +\infty.
    \end{align*}
    Obviously, the bounded integral $\mathcal{I}_p$ increases w.r.t. $p_0$. Let us define the value $\bar{u}$ as follows
    \begin{align}
        \int_{0}^{\bar{u}} \frac{1}{c^2 - u^2}\dd u = \mathcal{I}_p\quad\Leftrightarrow\quad \bar{u} = c\frac{\exp(2\mathcal{I}_p) - 1}{\exp(2\mathcal{I}_p) + 1} \in (0,c).\label{def_velo_bound}
    \end{align}
    Since $u_0 > \overline{u}$, we  have
    \begin{align*}
        \int_0^{u_0}\frac{1}{c^2 - u^2}\dd u > \int_0^{\overline{u}}\frac{1}{c^2 - u^2}\dd u \stackrel{\eqref{def_velo_bound}}{=} \mathcal{I}_p \geq \int_{p(\sigma)}^{p_0} \frac{\sqrt{e_p}}{c(e + p)}\dd p
        \stackrel{\eqref{ineq:p_vac_int}}{>} \int_{u(\sigma)}^{u_0} \frac{1}{c^2 - u^2}\dd u.
    \end{align*}
    Clearly, the pressure must be zero prior to the velocity (both are decreasing functions), since otherwise the inequality would be violated and thus have $p(s^\ast) = 0$ and $u(s^\ast) > 0$. From $\eqref{systemODE}_2$, we conclude that $p'(s) = 0$ for $s \geq s^\ast$, i.e., the pressure remains zero.
    Due to the fact $e_p \to \infty$ as $p \to 0$, we further obtain
    \[
      \lim_{s\to s^\ast}\varphi_A(s) = \lim_{s\to s^\ast}\frac{c\sqrt{e_p} - sc^2}{c\sqrt{e_p}s - 1} = \lim_{s\to s^\ast}\dfrac{1 - \dfrac{sc}{\sqrt{e_p}}}{s - \dfrac{1}{c\sqrt{e_p}}} = \frac{1}{s^\ast}.
    \]
     Therefore, it remains to show that $u(s^\ast) = 1/s^\ast$. Since $e_p \to \infty$ as $p \to 0$ for $s \to s^\ast$, there exists an $\varepsilon > 0$ such that for all $s \in (s^\ast - \varepsilon,s^\ast)$, we have
    \[
      \frac{g}{c^2e_p} = (us - 1)^2 - \frac{(u - sc^2)^2}{c^2e_p} > \frac{1}{2}(us - 1)^2.
    \]
    We integrate $\eqref{systemODE}_2$ for $s \in (s^\ast - \varepsilon,s^\ast)$ along the solution, assuming that $u(s^\ast) < 1/s^\ast$, and thus yield
    \begin{align}
        \int_{p(s^\ast - \varepsilon)}^{p(s^\ast)} \frac{c^2e_p}{e + p}\dd p = \int_{s^\ast - \varepsilon}^{s^\ast} \dfrac{c^2(d - 1)u(us - 1)}{(us - 1)^2 - \dfrac{(u - sc^2)^2}{c^2e_p}}\dd s > \int_{s^\ast - \varepsilon}^{s^\ast} \frac{2c^2(d - 1)u}{us - 1}\dd s.\label{ineq:u_vac_contr}
    \end{align}
    For the left integral, we have with $p(s^\ast) = 0$
    \begin{align*}
        \int_{p(s^\ast - \varepsilon)}^{p(s^\ast)} \frac{c^2e_p}{e + p}\dd p = c^2\int_{\gamma(s^\ast -\varepsilon)}^\infty\gamma\Phi_i'(\gamma)\dd\gamma = -\infty.
    \end{align*}
    However this contradicts with the lower bound in \eqref{ineq:u_vac_contr} which has a finite value for $u(s^\ast) < 1/s^\ast$. Thus we have $u(s^\ast) = 1/s^\ast$.
   \end{proof}
   ~\\
   \underline{\textbf{Case II:}} Solution becoming stationary for small $u_0$.
   \begin{lemma}\label{lem:case_2}
   	For $u_0 > 0$ sufficiently small, there exists an $s^\ast$ such that
   	\[
   	0 < u(s) < \varphi_A(s) < \frac{1}{s},\;\, s\in(1/c,s^\ast),\quad u(s^\ast) = \varphi_A(s^\ast) = 0.
   	\]
   \end{lemma}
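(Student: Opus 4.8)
The plan is to run the scheme of the proof of Lemma~\ref{lem:case_1} with the roles of $u$ and $p$ reversed: when $u_{0}$ is small the whole velocity profile stays small, so the pressure cannot be exhausted before the velocity is squeezed to $0$ by the sonic barrier $\varphi_{A}$. First I would use continuous dependence of the solution of \eqref{systemODE} on the initial datum on the compact interval $[0,1/c]$, where $g$ is bounded below by \eqref{ineq:g_pos_v1}, together with the fact that $(0,p_{0},\eta_{0})$ is the stationary solution for $u_{0}=0$. This gives: for every $\varepsilon>0$ there is $\delta>0$ such that $0<u_{0}<\delta$ implies $0<u(s)\le\varepsilon$ on $(0,1/c]$ (the lower bound from Lemma~\ref{lem:no_zeros}), $\eta\equiv\eta_{0}$, and $p(1/c)\ge p_{0}/2$. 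I then continue the solution for $s>1/c$ and let $s^{\ast}\in(1/c,\infty]$ be maximal such that the solution exists with $u>0$ and $p>0$ on $[1/c,s^{\ast})$. On that interval the argument of Lemma~\ref{lem:A_neg} gives $g>0$ and $0<u<\varphi_{A}$, and hence $\varphi_{A}<1/s$ by \eqref{phi_s_relation}; thus $us-1<0$ and $u-sc^{2}<u-c<0$, so by $\eqref{systemODE}_{1,2}$ both $u$ and $p$ are strictly decreasing, and in particular $u(s)\le u(1/c)\le\varepsilon$ throughout $[1/c,s^{\ast})$.

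The quantitative core is that the pressure cannot vanish. Dividing $\eqref{systemODE}_{2}$ by $\eqref{systemODE}_{1}$ the denominator $g$ cancels, and on $[1/c,s^{\ast})$
\[
\left|\frac{p'}{e+p}\right|=\frac{c^{2}(1-us)}{(c^{2}-u^{2})(sc^{2}-u)}\,|u'|\le\frac{2}{c}\,|u'|,
\]
once $\varepsilon$ is small (the coefficient tends to $1/c$ as $u\to0$, uniformly for $s\ge1/c$). Since $e+p$ is decreasing along the flow ($e_{p}>0$, $p'<0$), integrating over $[1/c,s]$ yields
\[
0<p(1/c)-p(s)\le\bigl(e(1/c)+p(1/c)\bigr)\frac{2}{c}\,u(1/c)\le C\varepsilon,
\]
so for $\varepsilon$ small $p(s)\ge p(1/c)/2>0$ on $[1/c,s^{\ast})$. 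Consequently $e_{p}\le E<\infty$ there, and since $u<\varphi_{A}$ forces $\varphi_{A}>0$ while $\varphi_{A}(s)<0$ whenever $e_{p}\le E<s^{2}c^{2}$, we obtain $s^{\ast}\le\sqrt{E}/c<\infty$.

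It remains to identify $s^{\ast}$. At $s^{\ast}$ the monotone bounded functions $u,p$ have limits $u^{\ast}\ge0$ and $p^{\ast}\ge p(1/c)/2>0$. If $g(s^{\ast})>0$, the right‑hand side of \eqref{systemODE} is regular and the solution extends past $s^{\ast}$ with $u,p>0$, contradicting maximality; hence $g(s^{\ast})=0$. Next, $u^{\ast}>0$ is impossible: with $g\to0^{+}$ one has $p'\to-\infty$, and writing $\varphi_{A}$ as a function of $s$ and of $e_{p}=e_{p}(p(s),\eta_{0})$ and differentiating, $\varphi_{A}'=\partial_{s}\varphi_{A}+\partial_{e_{p}}\varphi_{A}\cdot e_{pp}\cdot p'$, with $\partial_{e_{p}}\varphi_{A}>0$ for $s>1/c$ and $e_{pp}<0$ (the Appendix); therefore $\varphi_{A}'\to+\infty$, so $\varphi_{A}$ is increasing near $s^{\ast}$ while $u$ is strictly decreasing, giving $u(s)>u^{\ast}=\lim_{s\to s^{\ast}}\varphi_{A}(s)>\varphi_{A}(s)$ for $s$ close to $s^{\ast}$, contradicting $u<\varphi_{A}$. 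Hence $u^{\ast}=0$; and since $g(s^{\ast})=0$ with $|B|$ bounded below and $|A|=(c\sqrt{e_{p}}s-1)(\varphi_{A}-u)$ with $c\sqrt{e_{p}}s-1$ bounded below, this forces $\varphi_{A}(s^{\ast})=u^{\ast}=0$. Combining with the previous step gives $0<u(s)<\varphi_{A}(s)<1/s$ on $(1/c,s^{\ast})$ and $u(s^{\ast})=\varphi_{A}(s^{\ast})=0$; since the right‑hand sides of \eqref{systemODE} vanish at $u=0$, the solution is the stationary state $(0,p^{\ast},\eta_{0})$ for $s\ge s^{\ast}$ by uniqueness.

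I expect the main obstacle to be the last step, namely ruling out that the solution ``stalls'' on the sonic curve $u=\varphi_{A}$ at a positive velocity while the pressure is still positive; this is exactly where the inequality $e_{pp}<0$ from the Appendix enters essentially, through the blow‑up $\varphi_{A}'\to+\infty$. The remainder is bookkeeping, the only genuine estimate being the cancellation of $g$ between the two ODEs in the second step, which bounds the total variation of $p$ by $\tfrac{2}{c}$ times the (bounded) total variation of $u$, i.e.\ by $O(u(1/c))=O(\varepsilon)$.
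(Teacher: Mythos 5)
Your proposal is correct and follows essentially the same strategy as the paper: continuous dependence on the initial datum to make $u(1/c)$ small, division of $\eqref{systemODE}_{2}$ by $\eqref{systemODE}_{1}$ to cancel $g$ and bound the total variation of $p$ by $O(u(1/c))$, hence a positive lower bound on $p$ and an upper bound on $e_{p}$, and finally the conclusion that $\varphi_{A}$ reaches $0$ at a finite $s^{\ast}$ forcing $u(s^{\ast})=0$. The only notable difference is that you re-derive, via the blow-up of $\varphi_{A}'$ using $e_{pp}<0$, the impossibility of $u$ meeting $\varphi_{A}$ at a positive value, whereas the paper simply invokes Lemma~\ref{lem:A_neg}, which already encapsulates exactly that contradiction argument.
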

   \begin{proof} - 
  Since the right-hand side of the ODE system \eqref{systemODE} is smooth, the solution depends continuously on the initial datum for $s \in (0,1/c]$.  
We compare the two solutions at $s = 1/c$ corresponding to the initial data $(u_0,p_0,\eta_0)$ and $(0,p_0,\eta_0)$, respectively.  
Note that the latter yields a stationary solution, and that the entropy remains constant for smooth solutions.  
For this purpose, the velocity is scaled by $1/c$, while the remaining quantities are scaled by their corresponding initial values.  
We thus obtain for the scaled quantities
   \begin{align*}
       \left\|\begin{pmatrix} u(1/c)/c\\ p(1/c)/p_0\\ \eta(1/c)/\eta_0\end{pmatrix} - \begin{pmatrix} 0\\ p_0/p_0\\ \eta_0/\eta_0\end{pmatrix}\right\|
        \leq \tilde{C}\left\|\begin{pmatrix} u_0/c\\ 1\\ 1\end{pmatrix} - \begin{pmatrix} 0\\ 1\\ 1\end{pmatrix}\right\| = \tilde{C}\frac{u_0}{c}.
    \end{align*}
    Here $\tilde{C}$ is a constant depending on the Lipschitz constant of the RHS of the ODE system \eqref{systemODE}.
    Hence, in order for the solutions to be closer than any given $\varepsilon_c > 0$, we choose $u_0 < c\varepsilon_c/\tilde{C} =: \varepsilon_0$. Thus we further have $0 < u(1/c) < c\varepsilon_c$ and $0 < p_0 - p(1/c) < p_0\varepsilon_c$.\\
    For $s > 1/c$, we get
    \[
      \frac{us - 1}{u - c^2s} = \frac{1 - us}{c^2s - u} < \frac{1}{c}.
    \]
    Combining this with $\eqref{systemODE}_1$ and $\eqref{systemODE}_2$, we obtain that for $s > 1/c$,
    \begin{align*}
        \frac{\dd p}{\dd u} = \frac{c^2(e + p)(us - 1)}{(c^2 - u^2)(u - sc^2)} < c\frac{e + p}{c^2 - u^2}.
    \end{align*}
    Hence by integration we have
    \begin{align}
        &\int_{p(s)}^{p_0(1 - \varepsilon_c)}\frac{1}{c(e + p)}\dd p < \int_{p(s)}^{p(1/c)}\frac{1}{c(e + p)}\dd p\nonumber\\
        < &\int_{u(s)}^{u(1/c)}\frac{1}{c^2 - u^2}\dd u < \int_0^{u(1/c)}\frac{1}{c^2 - u^2}\dd u < \int_0^{c\varepsilon_c}\frac{1}{c^2 - u^2}\dd u.\label{ineq:int_p_u_case2}
    \end{align}
    Thus there exists a constant $\underline{p} > 0$ such that $0 < \underline{p} < p(s) < p(1/c)$ for all $s > 1/c$. In fact, assume no lower bound would exist, then the integral on the left will have an explicit lower bound.
    This contradicts with the smallness of $\varepsilon_c $. Therefore, we can conclude that there exists an $s^\ast$ such that
    \[
      \varphi_A(s^\ast) = \frac{c\sqrt{e_p} - s^\ast c^2}{c\sqrt{e_p}s^\ast - 1} = 0\quad\text{with}\quad s^\ast = \frac{1}{c}\sqrt{e_p(p(s^\ast),\eta_0)}.
    \]
    Since $0 < u(s) < \varphi_A(s)$ for $s \in (1/c,s^\ast)$, we have $u(s^\ast) = 0$. Note that the ODE becomes stationary then.
   \end{proof}
   ~\\
   \noindent
   \underline{\textbf{Case III:}} Solution for intermediate region.
   The result for \textbf{Case I} made a statement when the initial velocity is large enough, i.e. $u_0 > \bar{u}$. Whereas when the velocity is close to zero we yield \textbf{Case II}. By continuity we can conclude that for an initial datum with $u_0 \leq \overline{u}$, but large enough such that \textbf{Case II} is excluded, we obtain a smooth solution with
   \[
     0 < u(s) < \varphi_A(s) < \frac{1}{s}\quad\text{and}\quad 0 < p(s)\quad\text{for}\quad s\in \left(\frac{1}{c},\infty\right).
   \]
   Detailed arguments are given in the Appendix \ref{app:aux_res_u_pos}.
   \subsection{The case $u_0 \in (-c,0)$.}\label{init_neg_u_prf}
   The next lemma provides some information as $g$ becomes zero and thus the solution would blow-up. Therefore we investigate the function $\varphi_A(s)$ in order to make statements about $\varphi_A(\hat{s}) = 0$ and $\varphi_A(\bar{s}) = u(\bar{s})$.
   \begin{lemma}\label{lem:blowup}
	Let $u_0 \in (-c,0)$, and let $u(s)$ be the solution of $\eqref{systemODE}_1$. Then:
	\begin{enumerate}[(i)]
		\item There exists $\hat{s} > 1/c$ such that 
		\[
		u(\hat{s}) < \varphi_A(\hat{s}) = 0, 
		\qquad 
		u(s) < 0 < \varphi_A(s) \quad \text{for all } s \in (0,\hat{s}).
		\]
		\item There exists $\bar{s} \in (\hat{s}, \infty)$ such that 
		\[
		u(\bar{s}) = \varphi_A(\bar{s}) < 0, 
		\qquad 
		u_0 < u(s) < \varphi_A(s) \quad \text{for all } s \in (0,\bar{s}).
		\]
	\end{enumerate}
\end{lemma}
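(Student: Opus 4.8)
The plan is to push the smooth solution forward from $s=1/c$, where Lemma~\ref{lem:no_zeros} provides $u(1/c)\in(-c,0)$ and $p(1/c)>0$, and to track the sign of $g$ through the comparison of $u(s)$ with the curve $\varphi_A(s)$ of \eqref{def:phi_A}. We use from Section~\ref{denom_prf} that for $s>1/c$ one has $c\sqrt{e_p}s-1>\sqrt3-1>0$ (since $e_p>3$), that $g<0\iff A>0\wedge B<0$, and that \eqref{AB_u_phi_relation} reads $u<\varphi_A\iff A<0$ and $u<\varphi_B\iff B<0$. The structural engine is this monotonicity: as long as $u<0$ and $g>0$, system \eqref{systemODE} gives $u-sc^2<0$ and $us-1<0$, hence $u'(s)>0$ and $p'(s)>0$; then by \eqref{dgamma1} one has $\gamma_p<0$ (because $\gamma^2\Phi'_i-1=g_i<0$), so the coldness $\gamma$ decreases along the flow, and by the inequality $e_{pp}<0$ proved in the Appendix, $e_p$ and hence $\sqrt{e_p}$ decrease as well, staying in $[\sqrt3,M]$ for some finite $M$ depending only on the data. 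Consequently $\chi(s):=\tfrac1c\sqrt{e_p(p(s),\eta_0)}-s$ is strictly decreasing wherever the solution exists.

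For part (i): at $s=1/c$ we have $\varphi_A(1/c)=c>0$ by \eqref{phi_s_relation} and $u(1/c)<0$, so $A<0$, $B<0$, $g>0$. Since $\varphi_A(s)>0\iff\chi(s)>0$, and $\chi(1/c)=\tfrac1c(\sqrt{e_p(p(1/c))}-1)>0$ while $\chi$ is strictly decreasing with $\chi(s)\le\tfrac1c\sqrt{e_p(p(1/c))}-s\to-\infty$, there is a unique $\hat s>1/c$ with $\chi(\hat s)=0$, i.e.\ $\varphi_A(\hat s)=0$. On $(1/c,\hat s)$ we have $g>0$, so the solution exists; moreover $u$ cannot reach the equilibrium $u=0$ there, since $u\equiv0$ solves $\eqref{systemODE}_1$ and at any $s<\hat s$ with $u=0$ one has $g=c^2(e_p-s^2c^2)>0$ (because $\chi(s)>0$), so the ODE uniqueness theorem forbids the crossing. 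Hence $u(s)<0<\varphi_A(s)$ on $(1/c,\hat s)$ and $u(\hat s)<0=\varphi_A(\hat s)$; combined with Lemma~\ref{lem:no_zeros} on $(0,1/c]$ (where likewise $u<0$), this gives (i).

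For part (ii): since $p$ keeps increasing while $g>0$, $\chi$ keeps decreasing beyond $\hat s$, so $\varphi_A(s)<0$ for all $s>\hat s$ in the existence interval. At $\hat s$ we have $u(\hat s)<0=\varphi_A(\hat s)$, so $u<\varphi_A$ and thus $A<0$, $B<0$, $g>0$ just past $\hat s$; let $\bar s\in(\hat s,\infty]$ be the supremum of the interval on which $u<\varphi_A$, so that on $(\hat s,\bar s)$ we have $u<\varphi_A<0$, $g>0$, and $p$ increasing. Suppose for contradiction $\bar s=\infty$: then $u$ is bounded (above by $\varphi_A(\hat s+1)<0$, below by $u(\hat s)$) and $p$ satisfies a linear differential inequality (as $\gamma$ bounded makes $e+p$ controlled by $p$), so the solution is global and $u$ increases to a limit $u_\infty$ with $u_\infty\le\varphi_A(s)<0$ for every $s>\hat s$. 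But then for $s\ge\max(1,\hat s)$ one has $|u|\ge|u_\infty|>0$, $c^2-u^2\ge c^2-u(1/c)^2>0$, $|u-sc^2|\ge sc^2$ and $g\le c^2e_p(us-1)^2\le C_1 s^2$; since $d\ge2$, $\eqref{systemODE}_1$ yields $u'(s)\ge C_2/s$ with $C_2>0$, so $u(s)\to+\infty$, contradicting $u<0$. Hence $\bar s<\infty$; as $B<0$ there, $g(\bar s)=0$, so $u(\bar s)=\varphi_A(\bar s)$, and $\bar s>\hat s$ forces $\varphi_A(\bar s)<0$. Finally $u_0<u(s)$ on $(0,\bar s)$ by the monotonicity of $u$, which together with Lemma~\ref{lem:no_zeros} gives (ii).

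The main obstacle is the contradiction step in part (ii). The delicate point is that $g$ may genuinely vanish in the limit $s\to\infty$ — indeed $g\sim c^2s^2(e_pu^2-c^2)$ with $e_pu^2\downarrow c^2$ not excluded a priori — so one must resist bounding $g$ from below and instead use the crude upper bound $g\le c^2e_p(us-1)^2$, keeping $|u|$ and $c^2-u^2$ away from their degenerate values via the monotonicity of $u$. This is also where $d\ge2$ enters: for $d=1$ the ODEs \eqref{systemODE} are trivial and the shock position is fixed instead by the Rankine--Hugoniot conditions. The other indispensable input is $e_{pp}<0$ from the Appendix: it is precisely what makes $\sqrt{e_p}$ monotone along the solution, hence $\chi$ strictly decreasing, which singles out $\hat s$ as a single crossing and keeps $\varphi_A$ negative for $s>\hat s$.
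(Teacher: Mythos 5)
Your proposal follows the same overall route as the paper: use $e_{pp}<0$ (via $p'>0$, $\eta'=0$) to make $\varphi_A$ monotone, locate the zero $\hat s$ of $\varphi_A$, and in part~(ii) derive a contradiction from the assumption that $u$ never meets $\varphi_A$ by bounding the right-hand side of $\eqref{systemODE}_1$ from below by a non-integrable $O(1/s)$ quantity. Your device $\chi(s)=\tfrac1c\sqrt{e_p(p(s),\eta_0)}-s$ with $\chi'<-1$ is a clean equivalent of the paper's computation of $\varphi_A'<0$ plus the auxiliary bound $h(s)$; both hinge on exactly the same ingredients. Part~(ii) is essentially the paper's argument.

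The one genuine gap is the last step of part~(i): the claim $u(\hat s)<0$. Your ODE-uniqueness argument (comparing against the stationary branch $u\equiv0$) correctly rules out $u(s_1)=0$ at any $s_1<\hat s$, because there $g=c^2(e_p(p(s_1),\eta_0)-s_1^2c^2)>0$ and the right-hand side of $\eqref{systemODE}$ is Lipschitz. But precisely at $s=\hat s$, if one had $u(\hat s)=0$ then also $g(\hat s)=c^2\bigl(e_p(p(\hat s),\eta_0)-\hat s^2c^2\bigr)=0$ (by the very definition of $\hat s$), so the right-hand side is singular at the point you need to exclude and Picard--Lindel\"of uniqueness does not apply there. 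The passage from ``$u<0$ on $(1/c,\hat s)$'' to ``$u(\hat s)<0$'' is therefore not justified, and the integration trick of Lemma~\ref{lem:no_zeros} cannot rescue it either, since it relies on $g$ being bounded away from zero. The paper closes this gap with a comparison principle (following Lai): replace $e_p(p(s),\eta_0)$ in the denominator by its value $c^2\hat s^2=e_p(p(\hat s),\eta_0)$, which is a \emph{lower} bound for $s<\hat s$ thanks to $p'>0$ and $e_{pp}<0$, compare $u$ with the solution $\hat u$ of the resulting modified ODE, and show $\hat u(\hat s)<0$ directly; then $u(\hat s)<\hat u(\hat s)<0$. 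This is exactly the step where the strict inequality $e_{pp}<0$ does work that mere monotonicity of $e_p$ cannot, and it is the piece you need to add.

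One more small remark on part~(ii): the divergent-integral estimate requires the solution to exist on $[\hat s,\infty)$. You wave at this (``linear differential inequality for $p$''), but the coefficient of that inequality involves $1/g$, and $g>0$ does not by itself give a positive lower bound; you should say explicitly that on any compact $[\hat s,T]$ with $u<\varphi_A$ one has $g$ bounded below (e.g.\ via $|A|\ge(\sqrt3-1)(\varphi_A-u)$ and $|B|\ge(\sqrt3+1)/s$), and then argue that the only obstruction to continuation is $g\to0$, which is precisely the intersection you seek. The paper leaves this equally implicit, so this is not a defect relative to it, but worth tightening.
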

   \begin{proof}
   	For $s \in (0,1/c]$, we have due to the previous results that
   	\[
   	u_0 < u(s) < 0 < \varphi_A(s).
   	\]
   	Now let us consider $\varphi_A(s)$ given by \eqref{def:phi_A} for $s > 1/c$.    
    Its first derivative is given by 
   	\begin{align*}
   		\frac{\dd \varphi_A}{\dd s} = \dfrac{(d-1)c^3(s^2c^2 - 1)u(us - 1)e_{pp}(e + p) + 2c^2g\sqrt{e_p}(1 - e_p)}{2\sqrt{e_p}g(c\sqrt{e_p}s - 1)^2}.
   	\end{align*}
   	As $\varphi_A > u$, we have $g > 0$. Due to $e_{pp} < 0$, we further get for $s > 1/c$ and $u < 0$
   	\begin{align*}
   		\frac{\dd \varphi_A}{\dd s} &= \dfrac{(d-1)c^3(s^2c^2 - 1)u(us - 1)e_{pp}(e + p) + 2c^2g\sqrt{e_p}(1 - e_p)}{2\sqrt{e_p}g(c\sqrt{e_p}s - 1)^2}\\
   		&< \dfrac{2c^2g\sqrt{e_p}(1 - e_p)}{2\sqrt{e_p}g(c\sqrt{e_p}s - 1)^2} < 0.
   		%
   	\end{align*}
   	Thus $\varphi_A$ is monotonically decreasing. Now assume that there is no $\hat{s}$ such that $\varphi_A(\hat{s}) = 0$, i.e., $\varphi_A(s) > 0$ for all $s > 1/c$.
    We have, as long as $c\sqrt{e_p} - sc^2 > 0$,
    \begin{align*}
        \varphi_A(s) = \frac{c\sqrt{e_p} - sc^2}{c\sqrt{e_p}s - 1} < \frac{c\sqrt{e_p} - sc^2}{\sqrt{3} - 1} =: h(s).
    \end{align*}
    As we have $u(s) < 0 < \varphi_A(s)$, we conclude that $p'(s) > 0$ and hence
    \begin{align*}
        h'(s) = \frac{1}{\sqrt{3} - 1}\left(c\frac{e_{pp}}{2\sqrt{e_p}}p'(s) - c^2\right) < -\frac{c^2}{\sqrt{3} - 1}
    \end{align*}
    Thus there exists an point $ \hat{s}>1/c $ such that
    \begin{align*}
        0 < \varphi_A(\hat{s}) < h(\hat{s}) = 0 
    \end{align*}
    which contradicts the assumption.
    Thus there exist an $\hat{s} > 1/c$ such that $\varphi_A(\hat{s})=0$ and we need to show that $u(\hat{s}) < 0$.
    To proof this, we follow \cite{Lai2019} and use a comparison principle.
    Consider the initial value problem
    \begin{align*}
        \begin{cases}
            \dfrac{\dd \hat{u}}{\dd s} &= \dfrac{(d - 1)(\hat{u} - sc^2)\hat{u}(c^2 - \hat{u}^2)}{c^4\hat{s}^2(\hat{u}s - 1)^2 - (\hat{u} - sc^2)^2},\\
            \hat{u} &= u_0.
        \end{cases}
    \end{align*}
    This problem admits a solution for $s\in [0, \hat{s}]$ and in particular $\hat{u}(\hat{s}) < 0$ holds.
    Further we have for $s < \hat{s}$ that $u(s) < 0 < \varphi_A(s)$ and hence $p'(s) > 0$. With $e_{pp} < 0$ it thus follows
   	\[
   	e_p(p(\hat{s}),\eta_0) < e_p(p(s),\eta_0),\quad s < \hat{s}.
   	\]
   	Together with $\varphi_A(\hat{s}) = 0$ we therefore yield
   	\[
   	c^2\hat{s}^2 = e_p(p(\hat{s}),\eta_0) < e_p(p(s),\eta_0),\;s < \hat{s}.
   	\]
    This gives for the RHS of the ODEs
    \[
    \frac{(d-1)u(c^2 - u^2)(u - sc^2)}{c^2e_p(us - 1)^2 - (u - sc^2)^2} < \frac{(d-1)u(c^2 - u^2)(u - sc^2)}{c^4\hat{s}^2(us - 1)^2 - (u - sc^2)^2}
    \]
   	and therefore we conclude that $u(\hat{s}) < \hat{u}(\hat{s}) < 0$. The first statement is proven.\\
   	As long as $u < \varphi_A < 0$ for $s > \hat{s}$, we have $g > 0$ and thus $u'(s) > 0$.
    Now assume $\varphi_A$ and $u$ do not intersect, then there must exist a value $u_0 < \bar{u} < 0$ such that
    \[
    \lim_{s\to\infty}u(s) = \bar{u}.
    \]
    Due to $u'(s) >0$, $p'(s) > 0$ and $e_{pp} < 0$, we have for $s > 0$
    \[
    \frac{\dd u}{\dd s} = \frac{(d-1)u(c^2 - u^2)(u - sc^2)}{c^2e_p(p,\eta_0)(us - 1)^2 - (u - sc^2)^2} >-\frac{(d-1)s\bar{u}(c^2 - u_0^2)}{e_p(p_0,\eta_0)(u_0s - 1)^2}.
    \]
    Hence we obtain
    \begin{align*}
        \bar{u} - u_0 &= \int_0^{+\infty}\frac{(d-1)u(c^2 - u^2)(u - sc^2)}{c^2e_p(p,\eta_0)(us - 1)^2 - (u - sc^2)^2}\dd s\\
        &> -\int_0^{+\infty}\frac{(d-1)s\bar{u}(c^2 - u_0^2)}{e_p(p_0,\eta_0)(u_0s - 1)^2}\dd s = +\infty.
    \end{align*}
    Therefore we have a contradiction and can conclude that there exists an $\bar{s}$ with $u(\bar{s}) = \varphi_A(\bar{s}) < 0$.
   \end{proof}
  
In view of the previous result, it is necessary to include the shock wave as part of the solution for $s \in (1/c,\bar{s})$.  
For results on the one-dimensional relativistic Euler equations, we refer to \cite{Ruggeri-Xiao-Zhao-ARMA-2021,Chen1997}.  

The eigenvalues of system \eqref{Euler31} are
\begin{align*}
	\lambda_1 &= \frac{c\sqrt{e_p}\,u - c^2}{c\sqrt{e_p} - u}, 
	&\quad \lambda_2 &= u, 
	&\quad \lambda_3 &= \frac{c\sqrt{e_p}\,u + c^2}{c\sqrt{e_p} + u}.
\end{align*}
Note that $-\lambda_1(-u) = \lambda_3(u)$, and therefore only 3-shocks (or forward shocks) associated with $\lambda := \lambda_3$ will be studied.  
Further, since $r/t = \xi = 1/s$, the unperturbed state in front of (i.e., to the right of) the shock wave is given by the solution $(u(s),p(s),\eta(s))$ of \eqref{systemODE} with initial data $(u_0,p_0,\eta_0)$, where $u_0 \in (-c,0)$.  
This solution will be denoted by $(u(s),p(s),\eta(s))$.  
Typically, the unperturbed state (here on the right of the shock) is assumed to be given, and the Hugoniot curve is then obtained.  
The perturbed (left) state can subsequently be determined by solving the jump conditions.  
Quantities corresponding to the perturbed state will be denoted with a subscript $\delta$, i.e., $(u_\delta(s),p_\delta(s),\eta_\delta(s))$ and $\gamma_\delta(s)$.\footnote{Following Lai \cite{Lai2019}, we omit the subscript $1$ for the unperturbed state and replace the subscript $2$ with $\delta$ for the perturbed state.}  
Since the energy, and in particular $e_p$, are given functions of the state variables, the arguments will be written explicitly to emphasize whether a perturbed or unperturbed state is considered, e.g., $e_p(\gamma)$ versus $e_p(\gamma_\delta)$.  

The jump conditions for system \eqref{Euler31} are given by
  %
   	%
   	\begin{align}\label{jc:radsym_relEuler_sys}
   	\begin{split}
   	    	\sigma\dbl \frac{c\rho}{\sqrt{c^2 - u^2}}\dbr &=  \dbl\frac{c\rho u}{\sqrt{c^2 - u^2}}\dbr, \\
   		\sigma\dbl \frac{e + p}{c^2 - u^2}u\dbr &= \dbl \frac{e + p}{c^2 - u^2}u^2 + p\dbr, \\
   		\sigma\dbl e + \frac{u^2}{c^2 - u^2}(e + p)\dbr &= \dbl \frac{c^2(e + p)u}{c^2 - u^2}\dbr, 
   	\end{split}
   	\end{align}
   	%
   %
   also compare the results in \cite{Steinhardt1982,Chen1997,Lai2019,Ruggeri-Xiao-Zhao-ARMA-2021}. In order to keep the notation simple we use $u$ inside the jump brackets for the velocity on both sides of the shock. Once we are concerned with the detailed calculations later on, we will carefully distinguish between the perturbed and unperturbed state.
   For later use it is in particular helpful to rewrite the jump conditions in the rest frame with respect to the shock as follows.
   In the shock rest frame the shock velocity is zero and the transformed velocity $\tilde{u}(s)$ is given by
\begin{align}\label{eq:velo_trafo_m}
    \begin{split}
        \tilde{u} &= \mathcal{T}_\sigma^-(u,s)= \frac{c^2(u - \sigma)}{c^2 - \sigma u} = \frac{c^2(us - 1)}{c^2s - u},\\
    u &= \mathcal{T}_\sigma^+(\tilde{u},s) = \frac{c^2(\tilde{u} + \sigma)}{c^2 + \sigma \tilde{u}} = \frac{c^2(\tilde{u}s + 1)}{c^2s + \tilde{u}}. 
    \end{split}
\end{align}
We introduce the following quantities, following the work by Steinhardt \cite{Steinhardt1982},
	\begin{align}
	    H = e + p,\quad v = \frac{c\tilde{u}}{\sqrt{c^2 - \tilde{u}^2}},\quad\text{and}\quad\tilde\Gamma = \frac{c}{\sqrt{c^2 - \tilde{u}^2}}.\label{def:new_quantities}
	\end{align}
	We obtain
	%
		%
		\begin{align}\label{jc:radsym_relEuler_sys_v2}
		       \dbl\rho v\dbr =0, \quad 
		    \dbl \frac{1}{c^2}Hv^2 + p\dbr =0, \quad 
		    \dbl \tilde\Gamma Hv\dbr =0. 
		\end{align}
		%
	%
	In the following we validate that the unperturbed state, given by the solution of \eqref{Euler31}, satisfies the Lax condition and thus we have the prerequisites to apply \cite[Prop.\ 3 \& 4]{Ruggeri-Xiao-Zhao-ARMA-2021}. This implies the existence of a unique perturbed state solving the jump conditions \eqref{jc:radsym_relEuler_sys}.
  \begin{lemma}\label{lem:shock_wave}
	Let $(u(s),p(s),\eta(s))$ be a solution of \eqref{systemODE} with $u_0 \in (-c,0)$ and $s \in (1/c,\bar{s})$.  
	Then there exists a unique shock solution $(u_\delta(s),p_\delta(s),\eta_\delta(s))$ to \eqref{jc:radsym_relEuler_sys} with admissible speed $\sigma = 1/s$.
\end{lemma}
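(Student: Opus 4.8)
The plan is to identify the shock as the straight ray $\xi = 1/s = \sigma$ in self-similar coordinates, to verify that the prescribed (unperturbed, right) state $(u(s),p(s),\eta(s))$ satisfies the Lax admissibility inequality appropriate to a $3$-shock of speed $\sigma$, and then to invoke \cite[Prop.\ 3 \& 4]{Ruggeri-Xiao-Zhao-ARMA-2021} to obtain the unique perturbed (left) state. First I would record that for $s\in(1/c,\bar s)$ one has $\sigma = 1/s\in(0,c)$, so the candidate shock speed is strictly subluminal and hence admissible as a physical shock speed. Passing to the shock rest frame via the Lorentz boost \eqref{eq:velo_trafo_m}, the jump relations \eqref{jc:radsym_relEuler_sys} take the Steinhardt form \eqref{jc:radsym_relEuler_sys_v2}; since the Rankine--Hugoniot relations are unaffected by the lower-order geometric source terms on the right-hand side of \eqref{Euler31}, this is precisely the one-dimensional relativistic shock system treated in \cite{Steinhardt1982,Ruggeri-Xiao-Zhao-ARMA-2021}.

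The crucial step is the Lax check on the unperturbed state. Using $\lambda_3 = (c\sqrt{e_p}\,u + c^2)/(c\sqrt{e_p}+u)$ one computes
\[
\lambda_3(u(s)) - \frac{1}{s} = \frac{c\sqrt{e_p}\,(us-1) - (u - sc^2)}{s\,(c\sqrt{e_p}+u)} = \frac{A}{s\,(c\sqrt{e_p}+u)},
\]
with $A$ as in \eqref{def:g_A}. The denominator is positive since $e_p>3$ gives $c\sqrt{e_p}>c\sqrt{3}>c>|u|$, so $\lambda_3(u(s)) - 1/s$ has the sign of $A$. By Lemma \ref{lem:blowup}, $u(s)<\varphi_A(s)$ for all $s\in(0,\bar s)$, which by \eqref{AB_u_phi_relation} is equivalent to $A<0$; hence $\lambda_3(u(s))<1/s=\sigma<c$ on $(1/c,\bar s)$. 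Thus the prescribed state lies strictly in the range $(\lambda_3(u(s)),c)$ of admissible $3$-shock speeds, which is exactly the hypothesis required by the shock-curve construction of \cite{Ruggeri-Xiao-Zhao-ARMA-2021}.

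With this in hand I would invoke \cite[Prop.\ 3 \& 4]{Ruggeri-Xiao-Zhao-ARMA-2021}: fixing the right state $(u(s),p(s),\eta(s))$ and the speed $\sigma = 1/s\in(\lambda_3(u(s)),c)$, these results yield a unique perturbed state $(u_\delta(s),p_\delta(s),\eta_\delta(s))$ solving \eqref{jc:radsym_relEuler_sys_v2}, hence \eqref{jc:radsym_relEuler_sys}, with $p_\delta(s)>p(s)$, $\eta_\delta(s)>\eta(s)$, and the remaining Lax inequalities $\lambda_2(u_\delta(s)) < \sigma < \lambda_3(u_\delta(s))$. Here the strict concavity $e_{pp}<0$ proved in Appendix~\ref{Appendice1} is used in an essential way: it guarantees genuine nonlinearity of the third characteristic family, which makes the Hugoniot locus a monotone curve and thereby forces uniqueness of the perturbed state. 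Uniqueness as a function of $s$ then follows because at each $s\in(1/c,\bar s)$ the unperturbed state is uniquely determined by Lemma \ref{lem:blowup} and the perturbed state is uniquely determined from it.

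The main obstacle I anticipate is bookkeeping rather than a new estimate: one must check that the inequality $u(s)<\varphi_A(s)$ supplied by Lemma \ref{lem:blowup} is transcribed into exactly the Lax inequality needed by \cite[Prop.\ 3 \& 4]{Ruggeri-Xiao-Zhao-ARMA-2021} (respecting sign conventions, the symmetry $-\lambda_1(-u)=\lambda_3(u)$, and which of the two states is prescribed), and that the hypotheses of those propositions — stated for the homogeneous one-dimensional system — genuinely apply once the frame change \eqref{eq:velo_trafo_m} has been performed and the source terms discarded. A secondary point is to confirm that $\sigma=1/s$ lies in the \emph{interior} of the admissible $3$-shock speed interval for all $s\in(1/c,\bar s)$, so that the shock is genuine throughout; this is immediate from $\lambda_3(u(s))<1/s<c$ above together with $s<\bar s$, which keeps $A<0$ strict.
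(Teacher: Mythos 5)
Your proposal is correct and takes essentially the same route as the paper: use Lemma~\ref{lem:blowup} to get $u(s)<\varphi_A(s)$, translate this into $A<0$ and hence $\lambda_3(u(s))<1/s=\sigma$ (your explicit identity $\lambda_3-\sigma = A/[s(c\sqrt{e_p}+u)]$ is a clean way to see this), and then invoke \cite[Prop.\ 3 \& 4]{Ruggeri-Xiao-Zhao-ARMA-2021} to obtain the unique perturbed state. Your additional remarks (subluminality of $\sigma$, irrelevance of the lower-order source terms for the jump relations, and the role of $e_{pp}<0$ behind genuine nonlinearity) are accurate context not spelled out in the paper's short proof but are exactly what makes the cited propositions applicable.
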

   \begin{proof}
   	Due to Lemma \ref{lem:blowup}, we have $u(s) < \varphi_A(s)$ for $s \in (1/c,\bar{s})$.
   	With \eqref{AB_u_phi_relation} and \eqref{cond_g_1}, this in particular gives $A < 0$ and $0 < g$.
   	From $A < 0$, we obtain
   	\begin{align*}
   		0 > A &= c\sqrt{e_p(\gamma(s))}(u(s)s - 1) - (u(s) - sc^2)\\
   		 &= s[\underbrace{c\sqrt{e_p(\gamma(s))}u(s) + c^2}_{> 0}) - (\underbrace{c\sqrt{e_p(\gamma(s))} + u(s)}_{>0}]\\
   		\Leftrightarrow\quad \lambda &=  \frac{c\sqrt{e_p(\gamma(s))}u(s) + c^2}{c\sqrt{e_p(\gamma(s))} + u(s)} < \frac{1}{s} = \sigma.
   	\end{align*}
   	Thus by applying \cite[Prop.\ 3 \& 4]{Ruggeri-Xiao-Zhao-ARMA-2021} we can conclude that there exists a unique perturbed state $(u_\delta(s),p_\delta(s),\eta_\delta(s))$ solving \eqref{jc:radsym_relEuler_sys}.
   \end{proof}
   Moreover, we can conclude from \cite[Prop.\ 4]{Ruggeri-Xiao-Zhao-ARMA-2021}, that  the Lax -- condition holds along the shock curve, i.e.
   \begin{align}
   	\lambda < \sigma < \lambda_\delta\label{Lax_cond}
   \end{align}
   for a shock wave with speed $\sigma = 1/s$.
   Further we have entropy-growth across the shock according to \cite[Cor.\ 1]{Ruggeri-Xiao-Zhao-ARMA-2021}, i.e.,
   \begin{align*}
   	\eta < \eta_\delta.
   \end{align*}
   The following lemma will provide more precise information about the shock solution.
  \begin{lemma}\label{lem:unique_shock}
	Let $(u(s),p(s),\eta(s))$ be the solution of \eqref{systemODE} with $u_0 \in (-c,0)$.  
	Further, let $(u_\delta(s),p_\delta(s),\eta_\delta(s))$ denote the uniquely defined perturbed state behind the shock, determined by \eqref{jc:radsym_relEuler_sys} and \eqref{Lax_cond} for $\sigma = 1/s$ and $s \in (1/c,\bar{s})$.  
	Then there exists a unique $s^\ast \in (1/c,\bar{s})$ such that $u(s^\ast) = 0$.
\end{lemma}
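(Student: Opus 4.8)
The plan is to realize $s^\ast$ as the unique zero of the auxiliary function $s\mapsto u_\delta(s)$ on $(1/c,\bar{s})$, where for each such $s$ the triple $(u_\delta(s),p_\delta(s),\eta_\delta(s))$ is the perturbed (constant) state behind a shock of speed $\sigma=1/s$ supplied by Lemma~\ref{lem:shock_wave} from the upstream state $(u(s),p(s),\eta(s))$; note that $u_\delta(s^\ast)=0$ says precisely that this constant state is at rest, so that the centre condition $(e\,u)(t,0)=0$ holds. First I would record continuity of $s\mapsto u_\delta(s)$ on $(1/c,\bar{s})$: the upstream solution is a genuine classical solution there (since $u(s)<\varphi_A(s)$ forces $g>0$ via \eqref{cond_g_1} and Lemma~\ref{lem:blowup}) and depends smoothly on $s$, while the perturbed state depends continuously on the upstream data and on the shock speed by \cite[Prop.\ 3 \& 4]{Ruggeri-Xiao-Zhao-ARMA-2021}.

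Next I would pin down the two endpoints. As $s\to\bar{s}^-$, Lemma~\ref{lem:blowup} gives $u(\bar{s})=\varphi_A(\bar{s})<0$, hence $A=0$ and $\lambda=1/s=\sigma$, so the shock degenerates into a characteristic curve of vanishing strength and $u_\delta(s)\to u(\bar{s})<0$. As $s\to(1/c)^+$ the shock speed $\sigma=1/s\to c$; writing the jump conditions in the shock rest frame~\eqref{jc:radsym_relEuler_sys_v2} and using the transformations~\eqref{eq:velo_trafo_m}, the upstream velocity $\tilde u=\mathcal{T}^-_\sigma(u(1/c),s)$ tends to $-c$, while by the Lax condition~\eqref{Lax_cond} the downstream velocity $\tilde u_\delta$ remains subsonic and thus stays in a compact subset of $(-c,0)$; transforming back through $\mathcal{T}^+_\sigma$ then forces $u_\delta(s)\to c>0$. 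By the intermediate value theorem there is therefore at least one $s^\ast\in(1/c,\bar{s})$ with $u_\delta(s^\ast)=0$.

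For uniqueness I would show that $u_\delta$ is strictly decreasing on $(1/c,\bar{s})$, or at least that $\dd u_\delta/\dd s<0$ at every zero. The route is to differentiate the jump conditions~\eqref{jc:radsym_relEuler_sys} with respect to $s$, substitute the ODEs~\eqref{systemODE} for $\dd u/\dd s$, $\dd p/\dd s$, $\dd\eta/\dd s$ of the upstream state together with $\dd\sigma/\dd s=-1/s^2$, and evaluate at a point where $u_\delta=0$; there the system collapses considerably (the downstream state is at rest, $\eta'=0$, and $p'(s)>0$ on $(1/c,\bar{s})$), and the sign of $\dd u_\delta/\dd s$ can be extracted from the structural inequalities $e_p>3$, $g_i<0$ and, decisively, $e_{pp}<0$ (Appendix~\ref{Appendice1}), applied to both the upstream and the downstream equations of state, in the spirit of the monotonicity argument already used for $\varphi_A$ in the proof of Lemma~\ref{lem:blowup}. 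A shorter alternative, in the style of \cite{Lai2019}, is a contradiction argument: two zeros $s_1<s_2$ would yield two shocks joining a state at rest to upstream states with the same entropy but $p(s_1)<p(s_2)$, which is incompatible with the monotonicity of the relativistic Hugoniot locus established in \cite{Ruggeri-Xiao-Zhao-ARMA-2021}.

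I expect this last step to be the main obstacle: it is the only place genuinely requiring quantitative control of the relativistic Rankine--Hugoniot relations under the Synge equation of state, and it is precisely where $e_{pp}<0$ and the monotone dependence of the characteristic speed~\eqref{LLLL} on $\gamma$ are indispensable. Existence, by contrast, reduces to the intermediate value theorem once the two endpoint limits are settled.
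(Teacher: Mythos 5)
Your plan matches the paper's proof in structure: continuity of $s\mapsto u_\delta(s)$ on $(1/c,\bar s)$, positivity of $u_\delta$ near $s=1/c$ from the Lax condition (the paper's Step~1 uses the equivalent inequality $\sigma c\sqrt{e_p(\gamma_\delta)}-c^2>\sigma c\sqrt{3}-c^2>0$ for $s<\sqrt{3}/c$; your shock-rest-frame version is a clean rephrasing), the degeneration $u_\delta(\bar s)=u(\bar s)<0$ at the other endpoint (the paper's Step~2, which in fact needs a separate contradiction argument via $\eqref{eq:p_qout_mass}$ to pin down the limit), the intermediate value theorem for existence, and $\dd u_\delta/\dd s<0$ at every zero for uniqueness, with $e_{pp}<0$ as the decisive structural input.

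Be aware, though, that the uniqueness step is far more involved than the sketch suggests. The paper writes the Rankine--Hugoniot system in the shock rest frame as $G(\tilde u,\gamma,\tilde u_\delta,\gamma_\delta)=0$, applies the implicit function theorem to obtain $(\tilde u_\delta,\gamma_\delta)=f(\tilde u,\gamma)$, establishes $\det(\mathbf{D}_\delta G)<0$ at $u_\delta(s^\ast)=0$ together with the sign of every entry of $\mathbf{D}f$ (using $e_{pp}<0$ to get $\gamma q_i-\chi_i<0$ and $\eqref{duG2_ineq_help}$), and then shows $\gamma_\delta'(s^\ast)>0$ by decomposing the target expression into $\Psi_I+\Psi_{II}$ and proving $\psi_{I,1}+\psi_{II,1}<0$, $\psi_{I,2}+\psi_{II,2}=0$ after lengthy manipulations in Appendix~\ref{app:shock_ineq}. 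Your ``shorter alternative'' via two hypothetical zeros and Hugoniot monotonicity is unlikely to close: at $s_1<s_2$ the upstream states differ in pressure \emph{and} velocity, the downstream entropies differ, and the shock speeds differ, so there is no single Hugoniot locus on which to compare them; the route through the sign of $u_\delta'(s^\ast)$ is the one that actually works here.
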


   \begin{proof}
The proof proceeds as follows:
\begin{enumerate}[Step 1:]
	\item Show that $u_\delta(s) > 0$ for some $s \in (1/c,\sqrt{3}/c)$.
	\item Verify that $u_\delta(s) \to u(\bar{s})$ as $s \to \bar{s}$, and hence $u_\delta(\bar{s}) < 0$.  
	Consequently, there exists $s^\ast \in (1/c,\bar{s})$ such that $u_\delta(s^\ast) = 0$.
	\item The uniqueness of $s^\ast$ is established by showing that for any $s^\ast$ with $u_\delta(s^\ast) = 0$, one has $u_\delta'(s^\ast) < 0$.
\end{enumerate}
\noindent\textbf{Step 1:}\quad For the perturbed state behind the shock, the Lax condition \eqref{Lax_cond} implies that
   	\begin{align*}
   		&\sigma < \lambda_\delta = \frac{c\sqrt{e_p(\gamma_\delta(s))}u_\delta(s) + c^2}{\underbrace{c\sqrt{e_p(\gamma_\delta(s))} + u_\delta(s)}_{>0}}\\
   		\Leftrightarrow\quad&\sigma(c\sqrt{e_p(\gamma_\delta(s))} + u_\delta(s)) < c\sqrt{e_p(\gamma_\delta(s))}u_\delta(s) + c^2\\
   		\Leftrightarrow\quad&\sigma c\sqrt{e_p(\gamma_\delta(s))} - c^2 < u_\delta(s)(\underbrace{c\sqrt{e_p(\gamma_\delta(s))} - \sigma}_{>0})\\
   		\Leftrightarrow\quad &\frac{\sigma c\sqrt{e_p(\gamma_\delta(s))} - c^2}{c\sqrt{e_p(\gamma_\delta(s))} - \sigma} < u_\delta(s).
   	\end{align*}
   	This in particular implies $A_\delta > 0$.
   	For $\sigma > c/\sqrt{3}\; \Leftrightarrow\; s < \sqrt{3}/c$, we obtain
   	\[
   	\sigma c\sqrt{e_p(\gamma_\delta(s))} - c^2 > \sigma c\sqrt{3} - c^2 > 0
   	\]
   	and thus $u_\delta(s) > 0$ for some $s \in (1/c,\sqrt{3}/c)$.\\
   	\newline
\noindent\textbf{Step 2:}\quad For $\bar{s}$, we have $u(\bar{s}) = \varphi_A(\bar{s})$ and by a simple manipulation this implies $\bar{\sigma} = \lambda(\bar{s})$.
      Due to the Lax condition we then have $\bar{\sigma} = \lambda_\delta(\bar{s})$, since otherwise we would yield
   	\[
   	\bar{\sigma} < \lambda_\delta(\bar{s})\quad\Rightarrow\quad \lambda(\bar{s}) < \bar{\sigma}.
   	\]
   	Let us define
   	\[
   	\lim_{s\to \bar{s}}u_\delta(s) = \bar{u}_\delta.
   	\]
   	From $\lambda_\delta(\bar{s}) =  \lambda(\bar{s})$ we compute
   	\begin{align*}
   		&~& \frac{c\sqrt{e_p(\gamma_\delta)}\bar{u}_\delta + c^2}{c\sqrt{e_p(\gamma_\delta)} + \bar{u}_\delta} &= \frac{c\sqrt{e_p(\gamma)}u + c^2}{c\sqrt{e_p(\gamma)} + u}\\
   		&\Leftrightarrow&\quad\left(c\sqrt{e_p(\gamma_\delta)}\bar{u} + c^2\right)\left(c\sqrt{e_p(\gamma)} + u\right) &= \left(c\sqrt{e_p(\gamma)}u + c^2\right)\left(c\sqrt{e_p(\gamma_\delta)} + \bar{u}_\delta\right)\\
   		&\Leftrightarrow&\quad c\left(\sqrt{e_p(\gamma)} - \sqrt{e_p(\gamma_\delta)}\right)\left(c^2 - u\bar{u}_\delta\right) &= c^2\left(\sqrt{e_p(\gamma)}\sqrt{e_p(\gamma_\delta)}(u - \bar{u}_\delta) - 1\right)(u - \bar{u}_\delta)\\
   		&\Leftrightarrow&\quad\frac{\sqrt{e_p(\gamma)} - \sqrt{e_p(\gamma_\delta)}}{\sqrt{e_p(\gamma)}\sqrt{e_p(\gamma_\delta)} - 1} &= c\frac{u - \bar{u}_\delta}{c^2 - u\bar{u}_\delta}.
   	\end{align*}
   	We clearly have $c^2 - u\bar{u}_\delta > 0$ and $\sqrt{e_p(\gamma)}\sqrt{e_p(\gamma_\delta)} - 1 > 0$. Now assume we have $\bar{u}_\delta > u$ as it holds for shocks, see \cite{Ruggeri-Xiao-Zhao-ARMA-2021}. This implies $\sqrt{e_p(\gamma)} - \sqrt{e_p(\gamma_\delta)} < 0$. With $e_{pp} < 0$ and $\dd p/\dd\gamma < 0$ we have
   	\[
   	  \frac{\dd}{\dd\gamma}e_p(\gamma) = e_{pp}(\gamma)\frac{\dd p}{\dd\gamma} > 0.
   	\]
   	Thus we yield the implication
   	\[
   	  \sqrt{e_p(\gamma)} - \sqrt{e_p(\gamma_\delta)} < 0\quad\Rightarrow\quad \gamma < \gamma_\delta.
   	\]
   	Considering \eqref{jc:radsym_relEuler_sys_v2}$_1$ together with $\rho = p\gamma/c^2$ we obtain
   	\begin{align}
   		0 = \dbl\rho v\dbr\quad\Leftrightarrow\quad\rho v = \rho_\delta v_\delta\quad\Leftrightarrow\quad\frac{p}{p_\delta} = \frac{\gamma_\delta}{\gamma}\frac{v_\delta}{v}.\label{eq:p_qout_mass}
   	\end{align}
   	Since we have $p_\delta > p > 0$ and $0 > v_\delta > v$ we conclude $\gamma > \gamma_\delta$. Thus we have a contradiction and hence it follows that
   	\begin{align*}
   		\lim_{s\to \bar{s}}u_\delta(s) = \bar{u}_\delta = u(\bar{s})
   	\end{align*}
   	Since $u_\delta(\bar{s}) = u(\bar{s})< 0$ and due to the result of the first step, there exists an $s^\ast$ with $s^\ast\in (1/c, \bar{s})$ such that $u_\delta(s^\ast) = 0$.\\
   	\newline
\noindent\textbf{Step 3:}\quad In the remaining step of the proof we will show the uniqueness of such a point $s^\ast$ with $u(s^\ast) = 0$. Therefore we will prove that $u'(s^\ast) < 0$ whenever $u(s^\ast) = 0$. To this end it is beneficial to study the jump conditions in the rest frame with respect to the shock given by \eqref{jc:radsym_relEuler_sys_v2}. Using \eqref{syngeg} we have the relations $\rho = p\gamma/c^2$ and $e = p\gamma\Phi_i(\gamma)$. This gives in particular
   	\begin{align}
   		H = p\chi_i(\gamma)\quad\text{with}\quad \chi_i(\gamma) := \gamma\Phi_i(\gamma) + 1.\label{def:chi}
   	\end{align}
With this we can obtain analogues to \eqref{eq:p_qout_mass}
\begin{align*}
    0 &= \dbl \frac{1}{c^2}Hv^2 + p\dbr
    \quad\Leftrightarrow&\quad\frac{p}{p_\delta} &= \frac{\chi_i(\gamma_\delta)v_\delta^2 + c^2}{\chi_i(\gamma)v^2 + c^2},\\
    0 &= \dbl \tilde\Gamma Hv\dbr\quad    
    \Leftrightarrow&\quad \frac{p}{p_\delta} &= \frac{\chi_i(\gamma_\delta)\tilde\Gamma_\delta v_\delta}{\chi_i(\gamma)\tilde\Gamma v}. 
\end{align*}
From these relations we can obtain the following non-linear system for the unknowns $\tilde{u}_\delta$ and $\gamma_\delta$, i.e.
\begin{align*}
    0 &= \frac{\chi_i(\gamma_\delta)}{\gamma_\delta}\tilde\Gamma_\delta - \frac{\chi_i(\gamma)}{\gamma}\tilde\Gamma =: G^{(1)}(\tilde{u},\gamma,\tilde{u}_\delta,\gamma_\delta), \\
    0 &= \frac{1}{\gamma_\delta v_\delta}(\chi_i(\gamma_\delta)v_\delta^2 + c^2) - \frac{1}{\gamma v}(\chi_i(\gamma)v^2 + c^2) =: G^{(2)}(\tilde{u},\gamma,\tilde{u}_\delta,\gamma_\delta). 
\end{align*}
We have the following derivatives
%
	%
	\begin{align}
	    \partial_\gamma\chi_i(\gamma) &= \Phi_i(\gamma) + \gamma\Phi_i'(\gamma) = h_i(\gamma)(\gamma h_i(\gamma) + 2(i + 1)) - \gamma = q_i(\gamma),\label{deriv:gamma_chi}\\
	    \partial_{\tilde{u}} v(\tilde{u}) &= \tilde\Gamma(\tilde{u})^3,\label{deriv:u_v}\\
	    \partial_{\tilde{u}} \tilde\Gamma(\tilde{u}) &= \frac{1}{c^2}v(\tilde{u})\tilde\Gamma(\tilde{u})^2,\nonumber\\
	    \partial_{\tilde{u}_\delta}G^{(1)}(\tilde{u},\gamma,\tilde{u}_\delta,\gamma_\delta) &= \frac{1}{c^2}\frac{\chi_i(\gamma_\delta)}{\gamma_\delta}v_\delta\tilde\Gamma_\delta^2,\label{deriv:u_G1}\\
	    \partial_{\tilde{u}_\delta}G^{(2)}(\tilde{u},\gamma,\tilde{u}_\delta,\gamma_\delta) &= \frac{\tilde\Gamma_\delta^3}{\gamma_\delta v_\delta^2}\left(\chi_i(\gamma_\delta)v^2_\delta - c^2\right),\nonumber\\
	    \partial_{\gamma_\delta}G^{(1)}(\tilde{u},\gamma,\tilde{u}_\delta,\gamma_\delta) &= \frac{\gamma_\delta q_i(\gamma_\delta) - \chi_i(\gamma_\delta)}{\gamma_\delta^2}\tilde\Gamma_\delta,\nonumber\\
	    \partial_{\gamma_\delta}G^{(2)}(\tilde{u},\gamma,\tilde{u}_\delta,\gamma_\delta) &= \frac{\gamma_\delta q_i(\gamma_\delta) - \chi_i(\gamma_\delta)}{\gamma_\delta^2}v_\delta - \frac{c^2}{\gamma_\delta^2 v_\delta}.\label{deriv:gamma_G2}   
	\end{align}
	%
%
Note that we have
\begin{align*}
	u(s) &< 0\quad\Rightarrow\quad\tilde{u}(s) < 0\quad\Rightarrow\quad v < 0, \\
	u_\delta(s) &< \sigma\quad\Rightarrow\quad\tilde{u}_\delta(s) < 0\quad\Rightarrow\quad v_\delta < 0 
\end{align*}
and hence we conclude $\partial_{\tilde{u}} v(\tilde{u}) > 0, \partial_{\tilde{u}} \tilde\Gamma(\tilde{u}) < 0$ for both  unperturbed and perturbed states, respectively. Additionally we directly have $\partial_{\tilde{u}_\delta}G^{(1)}(\tilde{u},\gamma,\tilde{u}_\delta,\gamma_\delta) < 0$  for $\sigma = 1/s^\ast$. 

For $i=0$ we have according to the results in \ref{app:eppd}
\begin{align*}
    \gamma q_0(\gamma) - \chi_0(\gamma) &= \gamma h_0(\gamma h_0 + 2) - \gamma^2 - \gamma \Phi_0 - 1\\
    &= \gamma^2h_0^2 + \gamma h_0 - \gamma^2 - 4 \stackrel{\eqref{eppd-den}}{<} 0.
\end{align*}
We can directly verify for $i=1$ that
\begin{align*}
    \gamma q_1(\gamma) - \chi_1(\gamma) &= \gamma h_1(\gamma h_1 + 4) - \gamma^2 - \gamma \Phi_1 - 1\\
    &= \gamma^2h_1^2 + 3\gamma h_1 - \gamma^2 - 4\\
    &= \gamma^2h_1^2 + 3\gamma h_1 - \gamma^2 - 3 - 1\stackrel{\text{\cite[Prop. 10, (21)]{Ruggeri-Xiao-Zhao-ARMA-2021}}}{<} -1.
\end{align*}
This leads to $\partial_{\gamma_\delta}G^{(1)}(\tilde{u},\gamma,\tilde{u}_\delta,\gamma_\delta) < 0$ and $\partial_{\gamma_\delta}G^{(2)}(\tilde{u},\gamma,\tilde{u}_\delta,\gamma_\delta) > 0$.
We have for the Jacobian
\begin{align*}
    \mathbf{D}_\delta G =
    \begin{pmatrix}
        \dfrac{1}{c^2}\dfrac{\chi_i(\gamma_\delta)}{\gamma_\delta}v_\delta\tilde\Gamma_\delta^2 & \dfrac{\gamma_\delta q_i(\gamma_\delta) - \chi_i(\gamma_\delta)}{\gamma_\delta^2}\tilde\Gamma_\delta\\
        \dfrac{\tilde\Gamma_\delta^3}{\gamma_\delta v_\delta^2}\left(\chi_i(\gamma_\delta)v^2_\delta - c^2\right) & \dfrac{\gamma_\delta q_i(\gamma_\delta) - \chi_i(\gamma_\delta)}{\gamma_\delta^2}v_\delta - \dfrac{c^2}{\gamma_\delta^2 v_\delta}
    \end{pmatrix}.
\end{align*}
A calculation given in Appendix \ref{app:shock_ineq} shows that
\begin{align}
    \det(\mathbf{D}_\delta G)(\tilde{u}_\delta,\gamma_\delta) &= \partial_{\tilde{u}_\delta}G^{(1)}\partial_{\gamma_\delta}G^{(2)} - \partial_{\gamma_\delta}G^{(1)}\partial_{\tilde{u}_\delta}G^{(2)} < 0.\label{jac_det}
\end{align}

Due to the symmetric structure of the jump conditions \eqref{jc:radsym_relEuler_sys_v2} we yield for the Jacobian with respect to the unperturbed state
\begin{align*}
    \mathbf{D}G = -
    \begin{pmatrix}
        \dfrac{1}{c^2}\dfrac{\chi_i(\gamma)}{\gamma}v\tilde\Gamma^2 & \dfrac{\gamma q_i(\gamma) - \chi_i(\gamma)}{\gamma^2}\tilde\Gamma\\
        \dfrac{\tilde\Gamma^3}{\gamma v^2}\left(\chi_i(\gamma)v^2 - c^2\right) & \dfrac{\gamma q_i(\gamma) - \chi_i(\gamma)}{\gamma^2}v - \dfrac{c^2}{\gamma^2 v}
    \end{pmatrix}.
\end{align*}
Here we conclude analogous to the perturbed Jacobian that $\partial_{\tilde{u}}G^{(1)}(\tilde{u},\gamma,\tilde{u}_\delta,\gamma_\delta) > 0$, $\partial_{\gamma}G^{(1)}(\tilde{u},\gamma,\tilde{u}_\delta,\gamma_\delta) > 0$ and $\partial_{\gamma}G^{(2)}(\tilde{u},\gamma,\tilde{u}_\delta,\gamma_\delta) < 0$. Due to \eqref{duG2_ineq_help} we have 
\[
  \chi_i(\gamma)v^2 - c^2 > 0
\]
and hence $\partial_{\tilde{u}}G^{(2)}(\tilde{u},\gamma,\tilde{u}_\delta,\gamma_\delta) < 0$.
Since the jump conditions have a unique solution for a given unperturbed state and shock velocity, we find a function $f \equiv (f^{(1)},f^{(2)})^T:\R^2\to\R^2$ such that
\[
  \begin{pmatrix}
      \tilde{u}_\delta\\ \gamma_\delta
  \end{pmatrix}
  = f(\tilde{u},\gamma)\quad\text{with}\quad G\left(\tilde{u},\gamma,f^{(1)}(\tilde{u},\gamma),f^{(2)}(\tilde{u},\gamma)\right) = 0.
\]
For the Jacobian of $f$ we have
\begin{align*}
    \mathbf{D}f = -\mathbf{D}_\delta G^{-1}\mathbf{D}G.
\end{align*}
In order to show the desired result $u'_\delta(s^\ast) < 0$, we want to remark that the ODE $\eqref{systemODE}_2$ for the pressure can be replaced with an ODE for $\gamma$ by the aid of \eqref{dgamma1}, i.e.,
\begin{align*}
	\frac{\dd\gamma}{\dd s} &= \frac{\dd\gamma}{\dd p}\frac{\dd p}{\dd s} = \gamma_p\frac{(d-1)uc^2(us - 1)(e + p)}{g}\notag\\
	&= \frac{\gamma}{\gamma^2\Phi_i'(\gamma) - 1}\frac{(d-1)uc^2(us - 1)\chi_i(\gamma)}{c^2e_p(\gamma)(us - 1)^2 - (u - sc^2)^2}. 
\end{align*}
From \cite[Prop.\ 5]{Ruggeri-Xiao-Zhao-ARMA-2021} we have
\[
  \frac{\dd u_\delta}{\dd p_\delta} > 0
\]
and hence the following relation holds
\begin{align*}
	\frac{\dd u_\delta}{\dd s} = \underbrace{\frac{\dd u_\delta}{\dd p_\delta}}_{> 0}\underbrace{\frac{\dd p_\delta}{\dd \gamma_\delta}}_{< 0}\frac{\dd\gamma_\delta}{\dd s}. 
\end{align*}
Thus, in order to have $ u'_\delta ( s^{\ast}) < 0$, we need to show that

\begin{align}
	\frac{\dd\gamma_\delta}{\dd s} = \frac{\partial f^{(2)}}{\partial \tilde{u}}\frac{\dd\tilde{u}}{\dd s} + \frac{\partial f^{(2)}}{\partial \gamma}\frac{\dd\gamma}{\dd s}>0\label{deriv:gamma_delta_s}
\end{align}
 for $\sigma = 1/s^\ast$. Due to $u(s) < 0$ and $g > 0$ for $s < \bar{s}$ it is directly verified, that $p'(s) > 0$ and therefore $\gamma'(s) < 0$.
The unperturbed velocity $u$ and its transform $\tilde{u}$ are related by the Lorentz transformation with respect to the shock rest frame, i.e.,
\begin{align}
	\frac{\dd \tilde{u}}{\dd s} &= \frac{\dd \mathcal{T}_\sigma^-(u(s),s)}{\dd s} = \frac{\partial \mathcal{T}_\sigma^-}{\partial u}\frac{\dd u}{\dd s} + \frac{\partial \mathcal{T}_\sigma^-}{\partial s}.\label{u_transform}
\end{align}
We have
\begin{align}
    \frac{\partial \mathcal{T}_\sigma^-}{\partial u} &= \frac{c^2(c^2 - \sigma^2)}{(c^2 - u\sigma)^2} > 0,\quad
    \frac{\partial \mathcal{T}_\sigma^-}{\partial s} = \frac{c^2(c^2 - u^2)}{(c^2 - u\sigma)^2} > 0.\label{deriv:trafo_m}
\end{align}
Thus we have due to $u'(s) > 0$ that $\tilde{u}'(s) > 0$.
For the partial derivatives we get
\begin{align}\label{deriv:df_du}
   \begin{split}
        \frac{\partial f^{(2)}}{\partial \tilde{u}} &= -\frac{1}{\det(\mathbf{D}_\delta G)}\left[\partial_{\tilde{u}_\delta}G^{(1)}\partial_{\tilde{u}}G^{(2)} - \partial_{\tilde{u}_\delta}G^{(2)}\partial_{\tilde{u}}G^{(1)}\right] > 0,\\
    \frac{\partial f^{(2)}}{\partial \gamma} &= -\frac{1}{\det(\mathbf{D}_\delta G)}\left[\partial_{\tilde{u}_\delta}G^{(1)}\partial_{\gamma}G^{(2)} - \partial_{\tilde{u}_\delta}G^{(2)}\partial_{\gamma}G^{(1)}\right] > 0. 
   \end{split}
\end{align}
Since we need to show that $\gamma_\delta'(s) > 0$ we start with \eqref{deriv:gamma_delta_s}, insert \eqref{u_transform} and then use \eqref{deriv:df_du} together with \eqref{deriv:trafo_m} to obtain
\begin{align*}
	\frac{\dd\gamma_\delta}{\dd s} &= \frac{\partial f^{(2)}}{\partial \tilde{u}}\frac{\dd\tilde{u}}{\dd s} + \frac{\partial f^{(2)}}{\partial \gamma}\frac{\dd\gamma}{\dd s}\\
	&= \frac{\partial f^{(2)}}{\partial \tilde{u}}\left(\frac{\partial \mathcal{T}_\sigma^-}{\partial u}\frac{\dd u}{\dd s} + \frac{\partial \mathcal{T}_\sigma^-}{\partial s}\right) + \frac{\partial f^{(2)}}{\partial \gamma}\frac{\dd\gamma}{\dd s}\\
	&> \frac{\partial f^{(2)}}{\partial \tilde{u}}\frac{\partial \mathcal{T}_\sigma^-}{\partial u}\frac{\dd u}{\dd s} + \frac{\partial f^{(2)}}{\partial \gamma}\frac{\dd\gamma}{\dd s}\\
	&= -\frac{c^2}{\det(\mathbf{D}_\delta G)}\frac{(d-1)u}{g}\left\{\left[\partial_{\tilde{u}_\delta}G^{(1)}\partial_{\tilde{u}}G^{(2)} - \partial_{\tilde{u}_\delta}G^{(2)}\partial_{\tilde{u}}G^{(1)}\right]\frac{(c^2s^2 - 1)}{(c^2s - u)^2}\left(c^2 - u^2\right)\left(u - sc^2\right)\right. \\
	&+ \left.\left[\partial_{\tilde{u}_\delta}G^{(1)}\partial_{\gamma}G^{(2)} - \partial_{\tilde{u}_\delta}G^{(2)}\partial_{\gamma}G^{(1)}\right]\frac{\gamma}{\gamma^2\Phi_i'(\gamma) - 1}(us - 1)\chi_i(\gamma)\right\}\\
	&= -\frac{c^2}{\det(\mathbf{D}_\delta G)}\frac{(d-1)u}{g}\left\{-\left[\partial_{\tilde{u}_\delta}G^{(1)}\partial_{\tilde{u}}G^{(2)} - \partial_{\tilde{u}_\delta}G^{(2)}\partial_{\tilde{u}}G^{(1)}\right]\frac{(c^2s^2 - 1)\left(c^2 - u^2\right)}{c^2s - u}\right. \\
	&+ \left.\left[\partial_{\tilde{u}_\delta}G^{(1)}\partial_{\gamma}G^{(2)} - \partial_{\tilde{u}_\delta}G^{(2)}\partial_{\gamma}G^{(1)}\right]\frac{\gamma}{\gamma^2\Phi_i'(\gamma) - 1}(us - 1)\chi_i(\gamma)\right\}.
\end{align*}
Since the pre--factor is negative we will further show, that the expression inside the brackets is also negative. For the following we want to remark the following observation
\begin{align}
	\gamma^2\Phi_i'(\gamma) - 1 = \gamma q_i(\gamma) - \chi_i(\gamma).\label{eq:deriv_chi_phi_rel}
\end{align}
For the treatment of the above expression it is beneficial to set
%
%
\begin{align}
	\Psi_I &:= -\left[\partial_{\tilde{u}_\delta}G^{(1)}\partial_{\tilde{u}}G^{(2)} - \partial_{\tilde{u}_\delta}G^{(2)}\partial_{\tilde{u}}G^{(1)}\right]\frac{(c^2s^2 - 1)\left(c^2 - u^2\right)}{c^2s - u}\notag\\
	&\hphantom{:}= \psi_{I,1} + \psi_{I,2},\label{def:psi_I}\\
	\psi_{I,1} &:= \frac{(c^2s^2 - 1)\left(c^2 - u^2\right)}{c^2s - u}\dfrac{1}{c^2}\dfrac{\chi_i(\gamma_\delta)}{\gamma_\delta}v_\delta\tilde\Gamma_\delta^2\dfrac{\tilde\Gamma^3}{\gamma v^2}\left(\chi_i(\gamma)v^2 - c^2\right)\notag,\\
	\psi_{I,2} &:= -\frac{(c^2s^2 - 1)\left(c^2 - u^2\right)}{c^2s - u}\dfrac{\tilde\Gamma_\delta^3}{\gamma_\delta v_\delta^2}\left(\chi_i(\gamma_\delta)v^2_\delta - c^2\right)\dfrac{1}{c^2}\dfrac{\chi_i(\gamma)}{\gamma}v\tilde\Gamma^2,\notag\\
	\Psi_{II} &:= \left[\partial_{\tilde{u}_\delta}G^{(1)}\partial_{\gamma}G^{(2)} - \partial_{\tilde{u}_\delta}G^{(2)}\partial_{\gamma}G^{(1)}\right]\frac{\gamma}{\gamma^2\Phi_i'(\gamma) - 1}(us - 1)\chi_i(\gamma)\notag\\
	&\hphantom{:}= \psi_{II,1} + \psi_{II,2},\notag\\
	\psi_{II,1} &:= -\frac{\gamma(us - 1)\chi_i(\gamma)}{\gamma q_i(\gamma) - \chi_i(\gamma)}\dfrac{1}{c^2}\dfrac{\chi_i(\gamma_\delta)}{\gamma_\delta}v_\delta\tilde\Gamma_\delta^2\left(\dfrac{\gamma q_i(\gamma) - \chi_i(\gamma)}{\gamma^2}v - \dfrac{c^2}{\gamma^2 v}\right),\notag\\
	\psi_{II,2} &:= \frac{(us - 1)\chi_i(\gamma)\tilde\Gamma}{\gamma}\dfrac{\tilde\Gamma_\delta^3}{\gamma_\delta v_\delta^2}\left(\chi_i(\gamma_\delta)v^2_\delta - c^2\right).\label{def:psi_II_2}
\end{align}
%
%
The detailed calculations in Appendix \ref{app:shock_ineq} give \eqref{psi_rel_fin}, i.e.,
\[
  \Psi_I + \Psi_{II} = \underbrace{\psi_{I,1} + \psi_{II,1}}_{< 0} + \underbrace{\psi_{I,2} + \psi_{II,2}}_{= 0} < 0.
\]
Thus collecting the results finally gives
\begin{align*}
	\frac{\dd\gamma_\delta}{\dd s} = \underbrace{-\frac{c^2}{\det(\mathbf{D}_\delta G)}\frac{(d-1)u}{g}}_{< 0}\underbrace{(\Psi_I + \Psi_{II})}_{< 0} > 0.
\end{align*}
Therefore we have $u_\delta'(s^\ast) < 0$ for $u_\delta(s^\ast)$ and by continuity the root is unique.
  \end{proof}
With the previous result on uniqueness for $u_0 \in (-c,0)$, the proof of Theorem \ref{thm:main} is complete, following the structure outlined above.  
Before discussing the piston problem, some further remarks and results concerning the shock solution are presented.  

\begin{remark}
	The following remarks are due:
	\begin{enumerate}[(i)]
		\item All previous results impose no particular restriction on $\gamma$, and are therefore valid for $\gamma \in (0,\infty)$.  
		Consequently, the present results also cover the limiting cases of the ultra-relativistic Euler equations for $\gamma \to 0$ and the classical  polytropic gas for $\gamma \to \infty$.
		\item Uniqueness of the shock solution has been established.  
		In \cite[Rem.~3.1]{Lai2019}, it is noted that this question remains open for the relativistic case.  
		In view of the previous remark, uniqueness is therefore also established for the polytropic case.
	\end{enumerate}
\end{remark}
The previous results concerning initial data with negative velocity are illustrated in Fig.~\ref{fig:shock_sol} and can be summarized as follows.  
For negative initial velocity, the ODE solution blows up at a point $\bar{s}$, see Lem.~\ref{lem:blowup}.  
By Lem.~\ref{lem:unique_shock}, this point corresponds to a characteristic shock, which is excluded here by the Lax condition.  
Hence, the shock occurs at some $s^\ast \in (\sqrt{3}/c, \bar{s})$.  

Furthermore, by Lem.~\ref{lem:shock_wave} and \cite[Prop.~4]{Ruggeri-Xiao-Zhao-ARMA-2021}, there exists a unique state that satisfies the jump conditions for a given $\sigma$ and the corresponding solution of \eqref{systemODE}.  
Thus, for a given $\sigma$, there is a unique $s^\ast$, a unique unperturbed state from \eqref{systemODE}, and a corresponding unique perturbed state behind the shock.  
Finally, Lem.~\ref{lem:unique_shock} guarantees the existence of a unique $s^\ast$ such that the velocity of the perturbed state vanishes, i.e., $u_\delta(s^\ast) = 0$.  
Consequently, for $u_0 \in (-c,0)$, the solution follows the ODE up to $s^\ast$ and is then continued by the constant state $(0,p_\delta(s^\ast),\eta_\delta(s^\ast))$ and $\gamma_\delta(s^\ast)$.
\begin{figure}
	\centering
    \includegraphics[width=\textwidth]{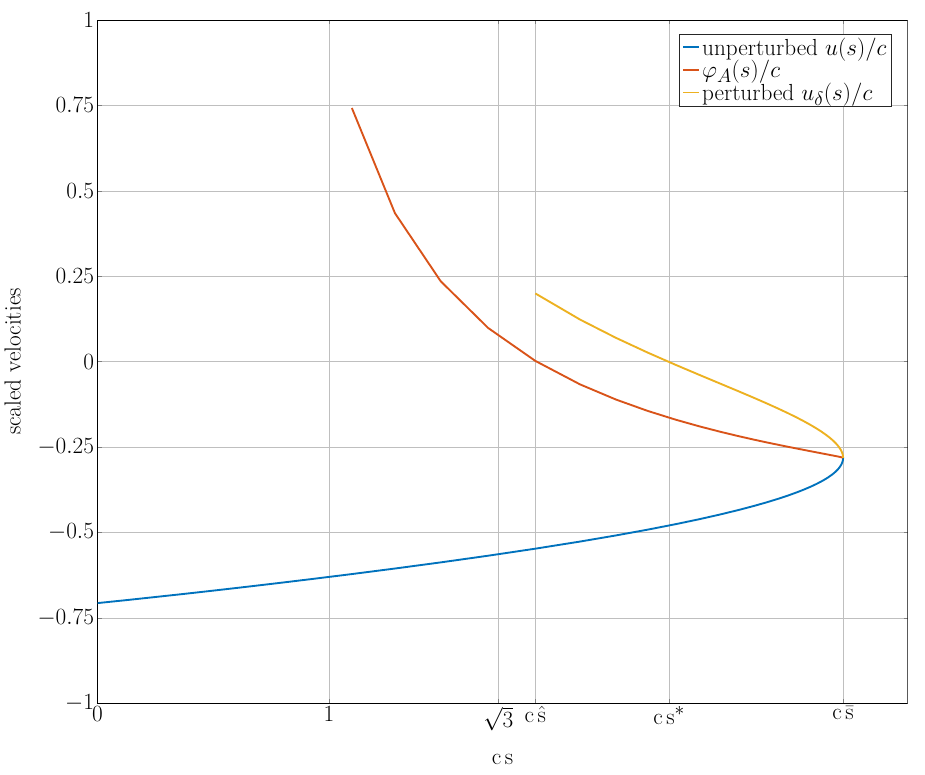}
    \caption{Visualization of the situation described in Lem.~\ref{lem:blowup} and Lem.~\ref{lem:unique_shock}.
    For an improved presentation we present the dimensionless quantities which are obtained with a proper scaling by the speed of light $c$. 
The blue curve represents the solution $u(s)/c$ of the ODE system up to $c\,\bar{s}$.
The red curve shows the function $\varphi_A(s)/c$ defined in \eqref{def:phi_A}.
Starting from $c\,\hat{s}$, we compute the perturbed state corresponding to each given unperturbed state defined by the ODE solution at given value of $c\,s$. The quantity $u_\delta(s)/c$ is depicted by the yellow curve.
One verifies that all curves coincide at $c\,\bar{s}$ and that $u_\delta(s)/c$ is monotonically decreasing with a unique zero at $c\,s^\ast$ where the shock occurs.}
\end{figure}
\FloatBarrier
We further present numerical results for the velocity of monoatomic and diatomic gases, as well as for different values of $\gamma$, prescribing an initial velocity $u_0/c = -1/\sqrt{2}$.  
The numerical solution is obtained as follows: First, the ODE is solved up to $cs = \sqrt{3}$, since no shock occurs before this point.  
From there, the ODE is continued, and for every triple $(s,\gamma(s),u(s))$, the perturbed state is computed using the jump conditions.  
The correct solution $(s^\ast,\gamma(s^\ast),u(s^\ast)), \gamma_\delta(s^\ast)$, with $u_\delta(s^\ast) = 0$, is identified when the perturbed velocity vanishes.  
By the preceding theoretical results, $s^\ast$ is unique.  
Numerically, this is implemented using a bisection method, checking that the absolute value of the perturbed velocity is below a prescribed tolerance; here $\varepsilon = 10^{-6}$.  
Once $s^\ast$ is found, the solution becomes stationary due to the zero velocity in the perturbed state.
\begin{figure}
    \centering
    \includegraphics[width=\textwidth]{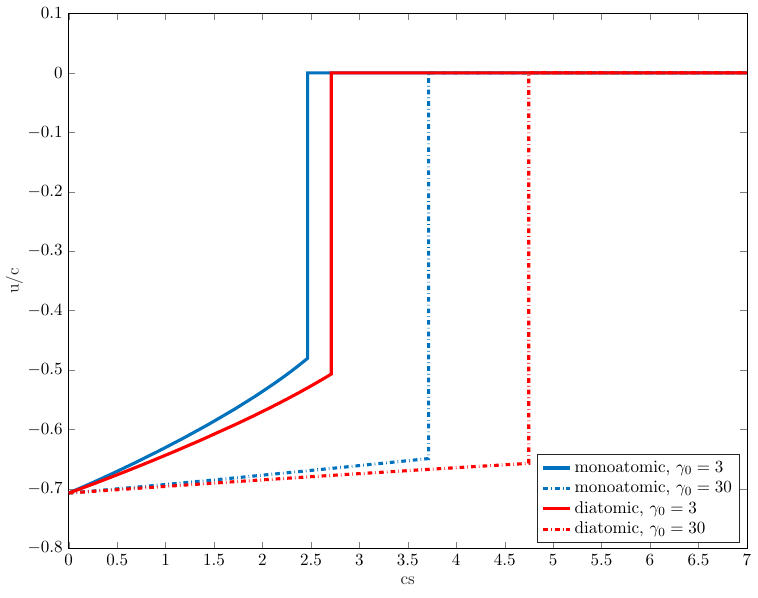}
    \caption{Shock solutions for the monoatomic (blue) and diatomic (red) gases with $u_0/c = -1/\sqrt{2}$, $\gamma_0 = 3$ (solid) and $\gamma_0 = 30$ (dashed). In the figure, the $x$–axis is expressed in terms of the dimensionless variable $cs$.}
  	\label{fig:shock_sol}
\end{figure}
%
%
\FloatBarrier
\section{The spherical piston problem}\label{sec:piston_res_prf}

In this section we consider the classical \emph{spherical piston problem} for the  
relativistic Euler system \eqref{Euler31}.  
We prescribe a uniform rest state
\[
(u,p,\eta)(0,r) = (0,p_0,\eta_0), \qquad p_0>0,\; \eta_0>0,
\]
and impose a boundary condition at a moving spherical surface of radius  
$r = \alpha t$, whose prescribed radial velocity is  
\[
u(t,\alpha t)=\alpha,\qquad \alpha\in(0,c).
\]

Physically, this setup describes a rigid spherical piston expanding outward with constant speed~$\alpha$, driving a compression wave into the surrounding relativistic fluid.  
The induced flow is radially symmetric and may develop shocks due to wave steepening, a phenomenon of particular relevance in high-energy astrophysical events, such as supernova explosions or relativistic jets.  
Relativistic effects, including Lorentz contraction and the coupling of energy and momentum, play a crucial role in shaping the dynamics, leading to non-linear behavior absent in classical fluids.  

Because both the initial data and the piston trajectory are scale-invariant, the induced motion cannot select any characteristic spatial or temporal scale.  
Consequently, the flow organizes itself into a self-similar pattern depending only on the similarity variable 
$
\xi =  {r}/{t}.
$

The main goal of this section is to establish existence and uniqueness of such a self-similar solution, which captures the entire piston-driven compression dynamics in a single reduced system of ordinary differential equations.  
Self-similar solutions also reveal universal features of relativistic flows, independent of specific initial conditions, making this problem a useful testbed for both theoretical analysis and numerical simulations.

   \begin{theorem}[Spherical Piston]\label{thm:piston}
   	For any given initial datum $(u_0,p_0,\eta_0)$ satisfying $p_0 > 0, \eta_0 > 0$, and
   	\[
   	(u,p,\eta)(0,r) = (0,p_0,\eta_0),\; u(t,\alpha t) = \alpha,\quad \alpha \in (0,c),
   	\]
   the spherical piston problem for system \eqref{Euler31} has a unique self-similar solution.
   \end{theorem}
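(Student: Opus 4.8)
The plan is to build the solution as a self-similar configuration with four pieces and then pin down its single free parameter by a shooting argument on the shock speed. In the similarity variable $\xi=r/t$ (equivalently $s=1/\xi$) the solution will consist of: the undisturbed rest state $(0,p_0,\eta_0)$ for $\xi>\sigma$; one outward $3$-shock at $\xi=\sigma$ with speed $\sigma$; a smooth region $\alpha<\xi<\sigma$ governed by the ODE system \eqref{systemODE}; and the rigid piston at $\xi=\alpha$. Here $\sigma$ --- equivalently the shock location $s_s=1/\sigma$ --- is the only quantity to be determined, everything else being forced. Since smooth solutions keep $\eta$ constant along particle paths (cf.\ \eqref{relEuler_sys:entropy} and $\eqref{systemODE}_3$), the entropy in the smooth region equals its post-shock value $\eta_1$, so \eqref{systemODE} reduces there to a planar system for $(u,p)$ alone.

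First I would fix the shock relations. For every admissible speed $\sigma\in\bigl(\lambda_3(0,p_0,\eta_0),\,c\bigr)$ the jump conditions \eqref{jc:radsym_relEuler_sys} together with the Lax condition single out a \emph{unique} post-shock state $(u_1,p_1,\eta_1)(\sigma)$ with $0<u_1<\sigma$, $p_1>p_0$, $\eta_1>\eta_0$; this is exactly the one-dimensional relativistic shock theory of \cite{Ruggeri-Xiao-Zhao-ARMA-2021} already used in Lemmas \ref{lem:shock_wave} and \ref{lem:unique_shock}, applied in the shock rest frame, the inequality $u_1<\sigma$ expressing that the fluid crosses the shock. Moreover $(u_1,p_1,\eta_1)\to(0,p_0,\eta_0)$ as $\sigma\downarrow\lambda_3(0,p_0,\eta_0)$ (weak-shock limit), whereas $u_1\to c$, $p_1\to\infty$, $\gamma_1\to0$ and, crucially, $\sigma-u_1\to0$ as $\sigma\uparrow c$ (ultra-relativistic strong-shock limit), the last asymptotics coming from the Bessel-function behaviour of the Synge energy recorded in the Appendix.

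Next I would run \eqref{systemODE} forward from $s=s_s$ with data $(u_1,p_1,\eta_1)$. Reading the computation in the proof of Lemma \ref{lem:shock_wave} in reverse, the Lax inequality $\sigma<\lambda_3$ at the post-shock state is equivalent to $A>0$ there (with $A$ as in \eqref{def:g_A}), while $u_1<\sigma<c$ gives $B<0$ there; hence $g=A\,B<0$ just behind the shock, and \eqref{systemODE} then forces $u'(s)>0$ and $p'(s)>0$ in the smooth region. Exactly as in the computation of $\dd\varphi_A/\dd s$ in the proof of Lemma \ref{lem:blowup}, the sign pattern $g<0$, $e_{pp}<0$, $e_p>3$, $s>1/c$, $us-1<0$ makes $\varphi_A$ strictly decreasing; since $u(s_s)=u_1>\varphi_A(s_s)$ and $u$ increases while $\varphi_A$ decreases, $u>\varphi_A$ (i.e.\ $A>0$) persists, while $u<1/s$ (maintained in the continuation) together with $1/s<\varphi_B$ from \eqref{phi_s_relation} keeps $B<0$; a continuation argument thus yields $g<0$ throughout the smooth region, excludes blow-up, and shows $u<1/s<c$ (subluminal flow) and $p>p_1>0$. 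Since $u$ is increasing and bounded above by the decreasing function $1/s$, there is a first $s^\ast=s^\ast(\sigma)<\infty$ with $u(s^\ast)=1/s^\ast$, and this crossing is transversal ($u$ increasing, $1/s$ decreasing); one terminates the profile by the piston at $\xi=\alpha(\sigma):=1/s^\ast(\sigma)$, and necessarily $\alpha(\sigma)<\sigma$.

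It remains to close the shooting. By continuous dependence of \eqref{systemODE} on $\sigma$ (through $s_s$ and the post-shock data) together with the implicit function theorem at the transversal crossing, $\sigma\mapsto\alpha(\sigma)$ is continuous on $\bigl(\lambda_3(0,p_0,\eta_0),\,c\bigr)$, and by the limits above $\alpha(\sigma)\to 0^+$ as $\sigma\downarrow\lambda_3(0,p_0,\eta_0)$ and $\alpha(\sigma)\to c^-$ as $\sigma\uparrow c$; hence every $\alpha\in(0,c)$ is attained, giving existence. For uniqueness one shows $\sigma\mapsto\alpha(\sigma)$ is strictly increasing: for $\sigma_1<\sigma_2$ the $\sigma_2$-trajectory starts at the smaller $s_s^{(2)}<s_s^{(1)}$ with strictly larger $(u_1,p_1,\eta_1)$, and a comparison argument for the $(u,p)$-system --- using the signs of the factors in \eqref{systemODE}, the barrier $\varphi_A$, and the confinement $\varphi_A<u<1/s<\varphi_B$ --- shows the two solution curves never cross, so the $\sigma_2$-curve meets $u=1/s$ at a strictly smaller $s^\ast$, i.e.\ $\alpha(\sigma_2)>\alpha(\sigma_1)$; inverting, the prescribed $\alpha$ corresponds to a unique $\sigma$ and hence to a unique self-similar solution. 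I expect this last comparison to be the main obstacle: the system is two-dimensional, non-autonomous, and carries \emph{different} constant entropies on the two trajectories, so naive componentwise monotonicity fails --- since $e_p$ is \emph{decreasing} in $p$ (as $e_{pp}<0$) --- and one must exploit the detailed structure (the factorisation $g=A\,B$, the monotonicity of $\varphi_A$, and the confinement $\varphi_A<u<1/s<\varphi_B$) to rule out crossings; a secondary technical point is the ultra-relativistic shock asymptotics $\sigma-u_1\to0$ as $\sigma\to c$ needed to obtain $\alpha(\sigma)\to c$.
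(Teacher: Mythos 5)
Your proposal follows the same shooting strategy as the paper: parameterise by the shock speed $\sigma$ (the paper uses $s_P=1/\sigma$), obtain the unique post-shock state from the jump conditions and Lax inequality via \cite[Prop.~3~\&~4]{Ruggeri-Xiao-Zhao-ARMA-2021}, continue by \eqref{systemODE} behind the shock --- where $g<0$, $u$ increases, $\varphi_A$ decreases, and $\varphi_A<u<1/s<\varphi_B$ persists --- up to the first crossing $u(s^\ast)=1/s^\ast$, which supplies the piston location $\alpha(\sigma)=1/s^\ast$, and close by the intermediate value theorem between the two endpoint limits. Your reading of the smooth region matches the paper's.

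Two parts of your sketch are, however, not carried out to the depth the argument actually requires. The weak-shock endpoint $\alpha(\sigma)\to 0^+$ as $\sigma\downarrow\lambda_3(0,p_0,\eta_0)$ --- the paper's claim $\sup_{s_P\in(1/c,\overline{s})}\tilde s(s_P)=\infty$ in \eqref{sup-ssp} --- is not a byproduct of shock asymptotics. The paper proves it by contradiction, comparing the true profile with the auxiliary ODE \eqref{vs-RR0} in which $c^2e_p$ is frozen to $c^4\overline{s}^2$, invoking \cite[Lemma~2.6]{Lai2019} for positivity of the auxiliary solution, and using both $e_{pp}<0$ and $e_{p\eta}<0$ from \eqref{epeta} to order the two vector fields at the first touching point; your proposal leaves this as an implicit consequence of Bessel-function behaviour, which is not enough. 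On uniqueness, you correctly name strict monotonicity of $\sigma\mapsto\alpha(\sigma)$ as what is needed and correctly diagnose why a naive componentwise comparison cannot work (distinct post-shock entropies on the two branches, together with $e_{pp}<0$). It is worth knowing that the paper's proof of Theorem~\ref{thm:piston}, as written, only establishes the surjectivity/IVT half from \eqref{inf-ssp} and \eqref{sup-ssp} and continuity of $\tilde s(\cdot)$; the non-crossing/monotonicity step you flag as the ``main obstacle'' is not supplied there, so it genuinely remains the open point of your outline and would need to be argued in full, most likely with a comparison principle in the spirit of the one used for \eqref{sup-ssp}.
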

   The proof of Theorem \ref{thm:piston} is presented subsequently. Note that we have a slightly different situation here. Due to the initial datum $u_0 = 0$, the solution up to the shock is constant. Then the state on the other side of the shock is governed by the jump conditions and from there the solution evolves according to \eqref{systemODE}, with the perturbed state being the initial datum. Thus we have an unperturbed state ahead of the shock given by $(0,p_0,\eta_0)$. Since a singularity occurs in \eqref{systemODE} when $\varphi_A(\overline{s}) = 0$ for
   \[
   \overline{s} = \frac{1}{c}\sqrt{e_p(p_0,\eta_0)},
   \]
   a shock wave will appear for some $s_P \in (1/c,\overline{s})$ with the speed $\sigma_P = 1/s_P$.
    Therefore,
   we need to look for a shock wave solution of the ODE system \eqref{systemODE}, which satisfies
   \[
   (u,p,\eta)(s_P) = (u_\delta,p_\delta,\eta_\delta)(s_P),\quad u(1/\alpha) = \alpha
   \]
    for $\alpha \in (0,c)$.\\
   Since we are considering a forward Lax shock, we have
   \[
   \sigma_P < \lambda_\delta\quad\text{and}\quad 0 = u_0 < u(s_P) = u_\delta < \sigma_P.
   \]
   A direct calculation shows (omitting the arguments)
   \begin{align*}
        &0 < \sigma_P < \lambda_\delta = \frac{c\sqrt{e_p(\gamma_\delta)}u_\delta + c^2}{c\sqrt{e_p(\gamma_\delta)} + u_\delta}
        \quad\stackrel{\sigma_P = 1/s_P}{\Leftrightarrow}\quad
        \frac{c\sqrt{e_p(\gamma_\delta)} - s_Pc^2}{cs_P\sqrt{e_p(\gamma_\delta)} - 1} < u_\delta\quad\Rightarrow\quad A > 0,\\
        &B = \left(c\sqrt{e_p(\gamma_\delta)}s_P + 1\right)u_\delta - \left(c\sqrt{e_p(\gamma_\delta)} + s_Pc^2\right) \stackrel{u_\delta < \sigma_P}{<} \frac{1}{s_P}(1 - s_P^2c^2) \stackrel{1/c < s}{<} 0
   \end{align*}
   and thus we have
   \[
   g = A\cdot B= c^2e_p(\gamma_\delta(s_P))\left(u_\delta(s_P)s_P - 1\right)^2 - \left(u_\delta(s_P) - s_Pc^2\right)^2 < 0.
   \]
   Therefore the solution of \eqref{systemODE} for the considered datum $(u,p,\eta)(s_P) = (u_\delta,p_\delta,\eta_\delta)(s_P)$ is locally well-defined.\\
   We can directly verify for the solution of the \eqref{systemODE} behind the shock that
   \[
   u'(s) > 0\quad\text{and}\quad p'(s)\,\begin{cases} &> 0,\;us < 1,\\ &< 0,\;us > 1.\end{cases}
   \]
   Collecting the previous results we have $\varphi_B(s) > 1/s$ for $s > 1/c$ and $\varphi_A(s) < u(s)$ as well as $\varphi_A'(s) < 0$ and $u'(s) > 0$ for $s \geq s_P > 1/c$. Thus we find a $\tilde{s}(s_P) > s_P$ such that
   \[
   u(\tilde{s}(s_P)) = \frac{1}{\tilde{s}(s_P)}.
   \]
   Note that $\tilde{s}(s_P)$ will play the role of $1/\alpha$.
   Since the solution of \eqref{systemODE} is continuous for $s > s_P$ and the given datum, $\tilde{s}$ is a continuous function of $s_P$.
   Due to  $u'(s) > 0$ in the interval $(s_P, \tilde{s}(s_P))$, we have
   \[
   \frac{1}{\tilde{s}(s_P)} = u(\tilde{s}(s_P)) > u(s_P) = u_\delta(s_P).
   \]
   Now, due to the Lax condition, we have
   \[
    u(s_P) = u_\delta > \frac{\sigma_Pc\sqrt{e_p(\gamma_\delta)} - c^2}{c\sqrt{e_p(\gamma_\delta)} - \sigma_P}.
   \]
   Together with $e_p > 3$, we hence conclude that for $s_P > 1/c$,
   \begin{align}\label{inf-ssp}
      \lim_{s_P \to 1/c}u(s_P) = c,\quad \lim_{s_P \to 1/c}\tilde{s}(s_P) = \frac{1}{c}.
   \end{align}
  Now we claim that
  \begin{align}\label{sup-ssp}
      \lim_{s_P \to \overline{s}}u(s_P) = 0,\quad \sup_{s_P \in (1/c,\overline{s})}\tilde{s}(s_P) = \infty.
   \end{align}
%
   Suppose the claim is false, then there exists a constant $R_0>1/c$ such that
\begin{align}\label{assu-ssR}
\sup_{s_P \in (1/c,\overline{s})}\tilde{s}(s_P) =R_0
\end{align}
Then, for $\overline{R}=R_0+\overline{s}$,  we consider the following equation
\begin{align}\label{vs-RR0}
    \frac{dv}{ds}=\frac{(d-1)v(c^2 - v^2)(v - sc^2)}{c^4\overline{s}^2(vs-1)^2-\left(v-sc^2\right)^2}, \quad \frac{1}{c}< s< \overline{R}
\end{align}
with $v(\overline{R})=1/\overline{R}$. By similar proof as in \cite[Lemma 2.6]{Lai2019}, we can obtain that the solution to \eqref{vs-RR0} satisfies $v(s)>0$ for  $s\in (1/c, \overline{R})$. This implies the solution $u=v(s)$ of \eqref{vs-RR0} should intersect the curve 
\[
u=\frac{c^2(s-\overline{s})}{1-c^2s\overline{s}}, \quad s>\frac1c
\]   
at some point $(\hat{s}, \hat{u})$ with $\hat{s} < \overline{s}$ and $\hat{u} > 0$.

Due to $\lim\limits_{s_P \to \overline{s}}u_\delta(s_P) = 0$ for the solution of \eqref{systemODE}, there exists some $\hat{s}_P$, which is sufficiently close to  $\overline{s}$, such that
\[
  u_\delta(\hat{s}_P) < \hat{u},\quad \hat{s} < \hat{s}_P < \overline{s}.
\]
 Let $\left(\widetilde{u},\widetilde{p},\widetilde{\eta}\right)(s)$ be the solution of \eqref{systemODE} with the initial condition
 \[
   (u,p,\eta)(\hat{s}_P) = (u_\delta,p_\delta,\eta_\delta)(\hat{s}_P).
 \]  
   Due to the assumption \eqref{assu-ssR}, there exists some $\check{s}\in (\hat{s}_P, \overline{R})$ such that
   \[
     v(\check{s})=\widetilde{u}(\check{s}), \qquad v(s)>\widetilde{u}(s),\,\, s\in\left(\hat{s}_P,\check{s}\right).
   \]
   Then we have
   \begin{align}\label{vus00}
        v'(\check{s})-\widetilde{u}'(\check{s})\leq 0.
   \end{align}
   Since $e_{pp}<0$ and $e_{p\eta}<0$ from \eqref{epeta}, we can obtain
   \[
   \overline{s}_0^2 \equiv e_p\left(\widetilde{p}(\check{s}), \eta_\delta(\hat{s}_P)\right) < e_p\left(p_0, \eta_\delta(\hat{s}_P)\right) < e_p\left(p_0, \eta_0\right) = \overline{s}_1^2
   \]
   since $\widetilde{p}(\check{s})>p_\delta(\hat{s}_P) > p_0$ and $\eta_\delta(\hat{s}_P) > \eta_0$. Therefore, we further get 
   \[
   v'(\check{s})=\frac{(d-1)v(c^2 - v^2)(v - \check{s}c^2)}{c^4\overline{s}_1^2(v\check{s}-1)^2-\left(v-\check{s}c^2\right)^2}>\frac{(d-1)\widetilde{u}(c^2 - \widetilde{u}^2)(\widetilde{u} - \check{s}c^2)}{c^4\overline{s}^2_0(\widetilde{u}\check{s}-1)^2-\left(\widetilde{u}-\check{s}c^2\right)^2}=\widetilde{u}'(\check{s}).
   \]
   This contradicts with \eqref{vus00} and thus the claim \eqref{sup-ssp} is valid. Hence Thm.\ \ref{thm:piston} follows from \eqref{inf-ssp} and \eqref{sup-ssp} and thus we completed the proof.
\section{Conclusion}
In this work we have studied the relativistic Euler equations in two and three dimensions with the physically realistic Synge equation of state for monoatomic gases, as well as its generalized version for diatomic gases.
We have established the existence and uniqueness of radially symmetric self-similar solutions to these systems, including the spherical piston problem.
A remarkable aspect of our analysis is that, thanks to the structure of the Synge energy, our results remain valid for the entire range of the relativistic temperature parameter~$\gamma$ \eqref{coldness}: from the ultra-relativistic regime where $\gamma \to 0$, through the relativistic regime where $\gamma \ll 1$, up to the classical limit corresponding to polytropic gases when $\gamma \to \infty$.
This stands in clear contrast with previous studies in the literature, which typically require restrictive assumptions on the constitutive equations.
In addition, we have rigorously proved the inequality $e_{pp}<0$ for the Synge energy both for monatomic than diatomic gases.
To the best of our knowledge, this result is new.
It plays a crucial role in the proof of our theorems and, at the same time, provides a rigorous justification of the monotone decrease of the characteristic velocity with respect to~$\gamma$.
The results obtained here contribute to the theoretical understanding of relativistic gases and may serve as benchmark solutions for the validation of numerical methods.
%



%
\appendix
\section{Modified Bessel functions and useful estimates}
In this part, we recall expressions and properties of the modified Bessel functions from \cite{Groot-Leeuwen-Weert-1980,Oliver-1974,Wa} and some useful estimates from \cite{Ruggeri-Xiao-Zhao-ARMA-2021}.
\begin{proposition}\cite{Groot-Leeuwen-Weert-1980,Oliver-1974,Wa}\label{def-pro}
	Let
	$K_j(\gamma)$ be the Bessel function defined by
	\begin{equation}\label{defini}
	 K_j(\gamma)=\frac{(2^j)j!}{(2j)!}\frac{1}{\gamma^j}\int_{\lambda=\gamma}^{\lambda=\infty}e^{-\lambda}(\lambda^2-\gamma^2)^{j-1/2}d\lambda,\quad(j\geq0).
	\end{equation}
	Then the following identities hold:
	\begin{align}	&K_j(\gamma)=\frac{2^{j-1}(j-1)!}{(2j-2)!}\frac{1}{\gamma^j}\int_{\lambda=\gamma}^{\lambda=\infty}e^{-\lambda}\lambda(\lambda^2-\gamma^2)^{j-3/2}d\lambda,\quad(j>0),\nonumber\\
	&K_{j+1}(\gamma)=2j\frac{K_j(\gamma)}{\gamma}+K_{j-1}(\gamma),\quad(j\geq1), \label{transform}\\
	&K_j(\gamma)<K_{j+1}(\gamma),\quad (j\geq0), \nonumber
	\end{align}
	and
	\begin{eqnarray}
	&\frac{d}{d\gamma}\left(\frac{K_j(\gamma)}{\gamma^j}\right)=-\frac{K_{j+1}(\gamma)}{\gamma^j},
\quad(j\geq0),\label{deriva}\\
	&\displaystyle K_{j}(\gamma)=\sqrt{\frac{\pi}{2\gamma}}e^{-\gamma}\Big(\gamma_{j,n}(\gamma)\gamma^{-n}
+\sum_{m=0}^{n-1}A_{j,m}\gamma^{-m}\Big),\quad(j\geq0,~n\geq1),\label{remainder}
	\end{eqnarray}
	where expressions of the coefficients in (\ref{remainder}) are
		\begin{align*}
	\begin{aligned}
	&A_{j,0}=1\\
	&A_{j,m}=\frac{(4j^2-1)(4j^2-3^2)\cdots(4j^2-(2m-1)^2)}{m!8^m},\quad(j\geq0,~m\geq1), \\
	&|\gamma_{j,n}(\gamma)|\leq2e^{[j^2-1/4]\gamma^{-1}}|A_{j,n}|,\quad(j\geq0,~n\geq1).
	\end{aligned}
	\end{align*}
		On the other hand, according to \cite[Page 80]{Wa}, the Bessel functions defined in (\ref{defini}) can also be written in the following form:
	\begin{align} 	K_0(\gamma)=&-\sum^{\infty}_{m=0}\frac{(\frac{1}{2}\gamma)^{2m}}{m!m!}
\Big[\ln\Big(\frac{\gamma}{2}\Big)-\psi(m+1)\Big],\label{k0}\\
	K_n(\gamma)=&\frac{1}{2}\sum^{n-1}_{m=0}(-1)^m\frac{(n-m-1)!}{m!}
\Big(\frac{1}{2}\gamma\Big)^{-n+2m}\nonumber\\
	&+(-1)^{n+1}\sum^{\infty}_{m=0}\frac{(\frac{1}{2}\gamma)^{n+2m}}{m!(m+n)!}\Big[\ln\Big(\frac{\gamma}{2}\Big)
	-\frac{1}{2}\psi(n+m)-\frac{1}{2}\psi(n+m+1)\Big],\nonumber\\
	\psi(1)=&-C_E,\quad \psi(m+1)=-C_E+\sum^{m}_{k=1}\frac{1}{k},\quad m\geq1,\nonumber\\
	 K_1(\gamma)=&\frac{1}{\gamma}+\sum^{\infty}_{m=0}\frac{(\frac{1}{2}\gamma)^{2m+1}}{m!(m+1)!}
\Big[\ln\Big(\frac{\gamma}{2}\Big)-\frac{1}{2}\psi(m+1)-\frac{1}{2}\psi(m+2)\Big],\label{k1}
		\end{align}
	where $C_E=0.5772157\ldots$ is the Euler's constant.
\end{proposition}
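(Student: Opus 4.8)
The statement assembles classical properties of the modified Bessel functions of the second kind, so the plan is to derive all of them from the integral definition~\eqref{defini} and to invoke \cite{Wa,Oliver-1974,Groot-Leeuwen-Weert-1980} for the parts whose verification is routine but lengthy. The starting point is the substitution $\lambda = \gamma t$ in~\eqref{defini}, which yields $K_j(\gamma) = \frac{2^j j!}{(2j)!}\,\gamma^j\int_1^\infty e^{-\gamma t}(t^2-1)^{j-1/2}\,dt$; using $\Gamma(j+\frac{1}{2}) = \frac{(2j)!}{4^j j!}\sqrt{\pi}$ (Legendre duplication), this is precisely the familiar representation $K_\nu(\gamma) = \frac{\sqrt{\pi}\,(\gamma/2)^\nu}{\Gamma(\nu+1/2)}\int_1^\infty e^{-\gamma t}(t^2-1)^{\nu-1/2}\,dt$ evaluated at $\nu = j$, so every standard property of $K_\nu$ becomes available. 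The second integral representation then follows from one integration by parts in~\eqref{defini}, integrating $e^{-\lambda}$ and differentiating $(\lambda^2-\gamma^2)^{j-1/2}$; the boundary terms vanish for $j\ge 1$, and collecting the normalising constants (which differ by a factor $1/(2j-1)$) gives the claimed formula.

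Next I would prove~\eqref{deriva} by differentiating $\gamma^{-j}K_j(\gamma) = \frac{2^j j!}{(2j)!}\,\gamma^{-2j}\int_\gamma^\infty e^{-\lambda}(\lambda^2-\gamma^2)^{j-1/2}\,d\lambda$ under the integral sign: the Leibniz boundary term at $\lambda=\gamma$ vanishes, and $\partial_\gamma(\lambda^2-\gamma^2)^{j-1/2} = -(2j-1)\gamma(\lambda^2-\gamma^2)^{j-3/2}$ recombines, via the second integral representation, into $-\gamma^{-j}K_{j+1}(\gamma)$. The identical computation applied to $\gamma^{j}K_j(\gamma)$ gives the companion relation $\frac{d}{d\gamma}\big(\gamma^{j}K_j(\gamma)\big) = -\gamma^{j}K_{j-1}(\gamma)$. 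Expanding both derivative identities and subtracting eliminates $K_j'$ and leaves precisely the three-term recurrence~\eqref{transform}. For $K_j < K_{j+1}$ I would pass to the equivalent representation $K_\nu(\gamma) = \int_0^\infty e^{-\gamma\cosh t}\cosh(\nu t)\,dt$ (see \cite{Wa}) and observe that $\cosh((j+1)t) > \cosh(jt)$ for $t>0$ and $j\ge 0$, so that $K_{j+1}(\gamma) - K_j(\gamma)$ is the integral of a strictly positive function.

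The asymptotic expansion~\eqref{remainder} I would obtain from Watson's lemma applied to $K_j(\gamma) = \frac{2^j j!}{(2j)!}\,\gamma^j e^{-\gamma}\int_0^\infty e^{-\gamma s}(2s)^{j-1/2}(1+s/2)^{j-1/2}\,ds$ (the substitution $t = 1+s$, using $t^2-1 = 2s(1+s/2)$): expanding $(1+s/2)^{j-1/2}$ by the binomial series, integrating term by term via $\int_0^\infty e^{-\gamma s}s^{j-1/2+m}\,ds = \Gamma(j+\frac{1}{2}+m)\,\gamma^{-j-1/2-m}$, and simplifying the resulting Gamma ratios with the duplication formula reproduces the coefficients $A_{j,m}$; the explicit remainder bound $|\gamma_{j,n}(\gamma)| \le 2e^{(j^2-1/4)/\gamma}|A_{j,n}|$ is then obtained by bounding the Taylor remainder of $(1+s/2)^{j-1/2}$ together with the corresponding tail of the Laplace integral, exactly as in \cite{Oliver-1974}. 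Finally, the convergent series~\eqref{k0}, \eqref{k1} and the general expansion of $K_n$ follow from $K_\nu(\gamma) = \frac{\pi}{2}\,\frac{I_{-\nu}(\gamma) - I_\nu(\gamma)}{\sin(\nu\pi)}$ and the power series $I_\nu(\gamma) = \sum_{m\ge 0}\frac{(\gamma/2)^{\nu+2m}}{m!\,\Gamma(\nu+m+1)}$ by letting $\nu\to n$ via l'Hôpital's rule; the logarithmic terms and the digamma values $\psi(1) = -C_E$, $\psi(m+1) = -C_E + \sum_{k=1}^m 1/k$ arise from $\partial_\nu I_{\pm\nu}$, and the full computation is carried out in \cite{Wa}.

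The only step that is more than bookkeeping is the asymptotic expansion: producing the \emph{uniform} remainder estimate with the explicit constant, rather than merely the order statement $K_j(\gamma)\sim\sqrt{\pi/(2\gamma)}\,e^{-\gamma}\sum_m A_{j,m}\gamma^{-m}$, requires careful quantitative control of the binomial Taylor remainder and of the integral tail; all the other identities reduce to integration by parts and elementary manipulations of the integral representations.
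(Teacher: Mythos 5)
The paper does not actually prove this proposition: it is stated verbatim as a compilation of classical facts about the modified Bessel functions $K_\nu$, with the proofs deferred entirely to the cited references (Watson, Olver, de Groot--van Leeuwen--van Weert). There is therefore no internal argument to compare your sketch against.

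That said, your outline is a correct and standard route to all of the listed identities, and it is essentially the one followed in Watson and Olver. The change of variables $\lambda = \gamma t$ to reach $K_\nu(\gamma) = \frac{\sqrt{\pi}(\gamma/2)^\nu}{\Gamma(\nu+1/2)}\int_1^\infty e^{-\gamma t}(t^2-1)^{\nu-1/2}\,dt$, the integration by parts for the second integral representation, the differentiation under the integral sign for \eqref{deriva}, the combination of the two derivative identities to get the three-term recurrence \eqref{transform}, the $\cosh$-representation for the monotonicity $K_j<K_{j+1}$, Watson's lemma with an explicit remainder bound for \eqref{remainder}, and the $I_{\pm\nu}$/l'H\^opital route to the convergent series \eqref{k0}--\eqref{k1} are all the textbook arguments. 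One small caveat worth flagging: your derivation of \eqref{deriva} by differentiating under the integral sign needs extra care at $j=0$, since $\partial_\gamma\big[(\lambda^2-\gamma^2)^{-1/2}\big] = \gamma(\lambda^2-\gamma^2)^{-3/2}$ is not integrable near $\lambda=\gamma$; the $j=0$ case has to be treated by a limiting argument, by first proving the recurrence for $j\ge 1$ and then inferring the case $j=0$, or by using a different representation (e.g.\ the $\cosh$ integral). This is a minor technical point that the standard references handle, but your sketch glosses over it.
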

For later use, we also need two different estimates:
\begin{proposition}\label{K01p}\cite{Ruggeri-Xiao-Zhao-ARMA-2021} Let $\gamma\in(0, \infty)$. $\frac{K_0(\gamma)}{K_1(\gamma)}$ satisfies:
\item  for $\gamma\in(0, \gamma_0]$,
	\begin{align}	&\frac{\gamma}{\sqrt{\gamma^2+1}+1}\leq\frac{K_0(\gamma)}{K_1(\gamma)}\leq \gamma\left[\frac{11}{16}-\left(\ln(\frac{\gamma}{2})+C_E\right)\right],\label{uplow-b1}\\
	&\hspace{2.5cm}\left(\frac{K_0(\gamma)}{K_1(\gamma)}\right)^2+\frac{2}{\gamma}\frac{K_0(\gamma)}{K_1(\gamma)}-1>0;\label{low1}
\end{align}
\item for $\gamma\in(\gamma_0, \sqrt{2}]$,
	\begin{equation}\label{new1}
	\frac{K_0(\gamma)}{K_1(\gamma)}\leq 1-\frac{\gamma_0-1}{\gamma};
	\end{equation}
\item for $\gamma\in(\sqrt{2}, \infty)$,
	\begin{equation}\label{rough}
	1-\frac{1}{2\gamma}\leq\frac{K_0(\gamma)}{K_1(\gamma)}\leq 1-\frac{1}{2\gamma}+\frac{3}{8\gamma^2}+\frac{3}{16\gamma^3}.
	\end{equation}
Moreover, for $\gamma\in(2, \infty)$, it holds that
	\begin{align} \label{acurate}
	\begin{aligned}
	&\frac{K_0(\gamma)}{K_1(\gamma)}\geq 1-\frac{1}{2\gamma}+\frac{3}{8\gamma^2}-\frac{3}{8\gamma^3}+\frac{63}{128\gamma^4}-\frac{31}{20\gamma^5}, \\
	&\frac{K_0(\gamma)}{K_1(\gamma)}\leq 1-\frac{1}{2\gamma}+\frac{3}{8\gamma^2}-\frac{3}{8\gamma^3}+\frac{63}{128\gamma^4}+\frac{7}{8\gamma^5}.
	\end{aligned}
	\end{align}
Here $\gamma_0=1.1229189\ldots$ is a constant satisfying
$$\ln\Big(\frac{\gamma_0}{2}\Big)+C_\mathrm{E}=0.$$
		\end{proposition}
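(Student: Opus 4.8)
The plan is to prove Proposition~\ref{K01p} range by range for $w(\gamma):=K_0(\gamma)/K_1(\gamma)$, leaning on two structural facts: $0<w<1$ for all $\gamma>0$ (from $K_0<K_1$ in Proposition~\ref{def-pro} and positivity via \eqref{k0}, \eqref{k1}); and, by \eqref{17} with $i=0$, the Riccati identity $w'=w^2+w/\gamma-1$, together with $K_1/K_2=\gamma/(2+\gamma w)$ from the recurrence \eqref{transform}. It is worth recording at the outset that solving the quadratic gives $w^2+\tfrac{2}{\gamma}w-1>0\iff w>\gamma/(\sqrt{\gamma^2+1}+1)$, so \eqref{low1} and the lower bound in \eqref{uplow-b1} are exactly the same statement.

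I would first settle that common lower bound, which in fact holds on all of $(0,\infty)$. By the standard log-convexity of $\nu\mapsto K_\nu(\gamma)$ — equivalently the inequality $K_1^2\le K_0K_2$, obtainable from an integral representation via Cauchy--Schwarz — and the substitution $K_2=2K_1/\gamma+K_0$ from \eqref{transform}, division by $K_1^2$ gives $1\le\tfrac{2}{\gamma}w+w^2$, which is \eqref{low1}. For the small-$\gamma$ upper bound in \eqref{uplow-b1} the logarithm is intrinsic, so I would use the convergent series \eqref{k0}, \eqref{k1} directly: write $K_0(\gamma)=-(\ln(\gamma/2)+C_E)+\gamma^2R_0(\gamma)$ and $\gamma K_1(\gamma)=1+\gamma^2R_1(\gamma)$, and on $(0,\gamma_0]$, where $\gamma^2/4\le\gamma_0^2/4<1$, bound the tails $R_0,R_1$ by geometric series with explicit constants times $|\ln(\gamma/2)|+1$. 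Since $-(\ln(\gamma/2)+C_E)\ge0$ there by the definition of $\gamma_0$, the quotient $w=\gamma\bigl(-(\ln(\gamma/2)+C_E)+\gamma^2R_0\bigr)/(1+\gamma^2R_1)$ is bounded above by $\gamma[\tfrac{11}{16}-(\ln(\gamma/2)+C_E)]$ once one checks that $\tfrac{11}{16}$ is large enough to absorb the first-order corrections from $R_0$ and from the denominator — a finite list of elementary estimates on $(0,\gamma_0]$.

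For the large-$\gamma$ bounds \eqref{rough} (on $\gamma>\sqrt2$) and \eqref{acurate} (on $\gamma>2$) I would use the asymptotic expansion \eqref{remainder} with its explicit remainder control $|\gamma_{j,n}(\gamma)|\le 2e^{(j^2-1/4)/\gamma}|A_{j,n}|$. For \eqref{rough} take $n=3$: the prefactor $\sqrt{\pi/(2\gamma)}\,e^{-\gamma}$ cancels in $w=K_0/K_1$, the denominator $1+\tfrac{3}{8\gamma}-\tfrac{15}{128\gamma^2}+\gamma_{1,3}\gamma^{-3}$ is positive for $\gamma>\sqrt2$, and after clearing denominators each of the two inequalities becomes the positivity of a short polynomial in $1/\gamma$ (leading coefficient $\tfrac{3}{8}$ for the lower bound) up to a term controlled by the bounds on $\gamma_{0,3},\gamma_{1,3}$, which is dominated once $\gamma>\sqrt2$. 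For \eqref{acurate} one repeats with $n=5$, where the additional coefficients and the smaller remainders make the sharper bounds hold for $\gamma>2$.

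The remaining — and most delicate — interval is $(\gamma_0,\sqrt2]$ of \eqref{new1}, where neither expansion is comfortably sharp and where a Riccati barrier starting from $1-(\gamma_0-1)/\gamma$ fails (that function is a supersolution of $w'=w^2+w/\gamma-1$ only on a range well below $\gamma_0$). Here I would rewrite $w\le1-(\gamma_0-1)/\gamma$ as $g(\gamma):=\gamma(1-w(\gamma))\ge\gamma_0-1$, compute $g'(\gamma)=1+\gamma-2w-\gamma w^2$, and reduce the claim to: (i) a two-sided bound for $w$ on the compact interval $[\gamma_0,\sqrt2]$, obtained from the series \eqref{k0}, \eqref{k1} with enough terms and rigorous tail control, strong enough to give $g'>0$ there; and (ii) the anchor $g(\gamma_0)\ge\gamma_0-1$, i.e. $w(\gamma_0)\le1/\gamma_0$, which follows from the $(0,\gamma_0]$ upper bound (at $\gamma_0$ the logarithm vanishes, so the bound reads $\tfrac{11}{16}\gamma_0<1/\gamma_0$). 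The main obstacle is quantitative: all these bounds pin $w$ down to the next order in $1/\gamma$, so the tail and remainder bookkeeping has almost no slack, the closed endpoints $\gamma_0,\sqrt2,2$ must each be cleared with a margin, and the junction near $\gamma=\sqrt2$ — where $n=3$ is only barely adequate — is where the bulk of the careful estimation concentrates.
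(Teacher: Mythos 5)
The paper does \emph{not} prove Proposition~\ref{K01p}; it is imported verbatim with the citation \cite{Ruggeri-Xiao-Zhao-ARMA-2021}, and the only new Bessel-ratio estimate actually proved in this appendix is the subsequent standalone proposition giving $K_0/K_1 \ge \gamma/2$ on $(\gamma_0,\sqrt{2}]$. There is therefore no in-paper argument to compare against; what follows is an assessment of your sketch on its own terms.

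Your structural observations are sound and, in a couple of places, nicer than one might expect. Solving the quadratic does identify the lower bound of \eqref{uplow-b1} with \eqref{low1}, and the Tur\'an-type inequality $K_1^2\le K_0K_2$ (from the integral representation $K_\nu(\gamma)=\int_0^\infty e^{-\gamma\cosh t}\cosh(\nu t)\,dt$, Cauchy--Schwarz, and $\cosh^2(\nu t)\le\cosh((\nu-1)t)\cosh((\nu+1)t)$) combined with the recurrence $K_2=\tfrac{2}{\gamma}K_1+K_0$ gives \eqref{low1} cleanly and in fact on all of $(0,\infty)$, which is more than the statement claims. The Riccati identity $w'=w^2+w/\gamma-1$ from \eqref{17}, the derivative $g'(\gamma)=1+\gamma-2w-\gamma w^2$ for $g=\gamma(1-w)$, and the anchor $w(\gamma_0)\le 1/\gamma_0$ via $\tfrac{11}{16}\gamma_0^2<1$ are all correct. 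The side remark that $1-(\gamma_0-1)/\gamma$ is not a Riccati supersolution on the range of interest also checks out.

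Where the proposal falls short of a proof is precisely where you flag it, and the shortfall is not cosmetic. The upper bound of \eqref{uplow-b1} needs the specific constant $\tfrac{11}{16}$ to dominate the second-order series corrections uniformly on $(0,\gamma_0]$, which is a genuine monotonicity argument, not ``a finite list of elementary estimates.'' For \eqref{rough} the remainder control in \eqref{remainder} with $n=3$ is marginal at $\gamma=\sqrt2$: $|A_{1,3}|=105/1024$ and the prefactor $2e^{(j^2-1/4)/\gamma}$ is about $3.4$ for $j=1$ there, so the remainder in the $K_1$ denominator is of the same order as the retained $\gamma^{-2}$ term, and one must actually compute to see whether the inequality survives. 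And the $(\gamma_0,\sqrt2]$ interval needs a certified two-sided numerical bound on $w$ tight to roughly two percent (the $g'>0$ threshold at $\gamma=\sqrt2$ is about $0.778$ against $w(\sqrt2)\approx 0.761$), which your sketch invokes but does not supply. The plan is viable, but the quantitative estimates it defers are essentially the entire content of this proposition.
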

\begin{proposition} 
\cite{Ruggeri-Xiao-Zhao-ARMA-2021} Let $\gamma\in(0,\infty)$ and $K_j(\gamma) (j\geq0)$ be the functions defined in Proposition \ref{def-pro}. Then it holds that
	\begin{equation}\label{imp-ine1}
	\gamma^2\left(\frac{K_1(\gamma)}{K_2(\gamma)}\right)^2+3\gamma\frac{K_1(\gamma)}{K_2(\gamma)}- \gamma^2-3<0,
	\end{equation}
	\begin{equation*}
\gamma\left(\frac{K_1(\gamma)}{K_2(\gamma)}\right)^3+4\left(\frac{K_1(\gamma)}{K_2(\gamma)}\right)^2-\gamma\frac{K_1(\gamma)}{K_2(\gamma)}-1<0.
	\end{equation*}
\end{proposition}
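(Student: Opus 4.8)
The two inequalities of the proposition involve only the ratio $h_1=K_1/K_2$; explicitly, they read $\gamma^2h_1^2+3\gamma h_1-\gamma^2-3<0$ and $\gamma h_1^3+4h_1^2-\gamma h_1-1<0$. The plan is to reduce them to elementary inequalities for $h_0=K_0/K_1$, which is controlled by the sharp bounds of Proposition~\ref{K01p}, and then to verify the resulting polynomial inequalities in $\gamma$. First I would eliminate $K_2$ via the recurrence \eqref{transform}, $K_2=\tfrac{2}{\gamma}K_1+K_0$, which gives $h_1=\dfrac{\gamma}{2+\gamma h_0}$. Setting $b:=\gamma h_0$, so that $0<b<\gamma$ (the bound $b<\gamma$ being $h_0<1$ from Proposition~\ref{def-pro}), and clearing the positive factor $(2+b)^2$ --- together with an additional positive factor $\gamma h_1$ in the second case --- both claims become equivalent to
\begin{align*}
E_1(\gamma,b)&:=\gamma^4+2\gamma^2-12-(\gamma^2+12)\,b-(\gamma^2+3)\,b^2<0,\\
E_2(\gamma,b)&:=\gamma^4+4\gamma^2-8-12\,b-(\gamma^2+6)\,b^2-b^3<0.
\end{align*}
Since $E_1(\gamma,\cdot)$ and $E_2(\gamma,\cdot)$ are strictly decreasing in $b$ on $(0,\infty)$, it suffices to supply, for each $\gamma>0$, a sufficiently strong \emph{lower} bound for $b=\gamma h_0$.

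The lower bound I would use is $h_0^2+\tfrac{2}{\gamma}h_0-1>0$, equivalently $b>\sqrt{\gamma^2+1}-1$ (i.e.\ $(b+1)^2>\gamma^2+1$). On $(0,\gamma_0]$ this is precisely \eqref{low1}; on $(\sqrt{2},\infty)$ it follows from \eqref{rough}, since $h_0\ge1-\tfrac{1}{2\gamma}$ gives $h_0^2+\tfrac{2}{\gamma}h_0-1\ge\tfrac{1}{\gamma}-\tfrac{3}{4\gamma^2}>0$ for $\gamma>\tfrac{3}{4}$; and on the remaining short window $(\gamma_0,\sqrt{2}]$ one obtains it from the strict monotonicity of $h_0$. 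That monotonicity follows from the Riccati identity $h_0'=h_0^2+\tfrac{h_0}{\gamma}-1$ (equation \eqref{17} with $i=0$): at any critical point of $h_0$ one computes $h_0''=-h_0/\gamma^2<0$, so a critical point would be a strict local maximum, which is impossible for a function increasing monotonically from $0$ to $1$; hence $h_0'>0$ on $(0,\infty)$, and therefore $h_0^2+\tfrac{2}{\gamma}h_0-1=h_0'+\tfrac{h_0}{\gamma}>0$ everywhere. (Alternatively, the narrow interval $(\gamma_0,\sqrt{2}]$ may be dispatched by hand from the series \eqref{k0}--\eqref{k1}.)

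Putting $t:=\sqrt{\gamma^2+1}$ and substituting the lower bound $b=\sqrt{\gamma^2+1}-1$ (the true value of $b$ being strictly larger, and $E_1,E_2$ decreasing in $b$) produces the clean factorizations
\[
E_1<t^3-2t^2-7t-4=(t-4)(t+1)^2,\qquad E_2<t^3-t^2-5t-3=(t-3)(t+1)^2,
\]
which are negative as long as $t<4$, resp.\ $t<3$, that is, for $\gamma<\sqrt{15}$, resp.\ $\gamma<2\sqrt{2}$. For the two remaining half-lines $\gamma\ge\sqrt{15}$ and $\gamma\ge2\sqrt{2}$, both contained in $(\sqrt{2},\infty)$, I would instead use the sharper bound $b\ge\gamma-\tfrac{1}{2}$ from \eqref{rough}: inserting $b=\gamma-\tfrac{1}{2}$ gives $E_1\le-\tfrac{3}{4}\gamma^2-9\gamma-\tfrac{27}{4}$ and $E_2\le-\tfrac{3}{4}\gamma^2-\tfrac{27}{4}\gamma-\tfrac{27}{8}$, which are manifestly negative. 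This completes the argument.

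The only genuinely delicate point is securing $h_0^2+\tfrac{2}{\gamma}h_0-1>0$ uniformly on $(0,\infty)$, since Proposition~\ref{K01p} supplies it only on $(0,\gamma_0]$; everything past that is routine algebra. It is worth noting that no slack is lost as $\gamma\to\infty$: there $b=\gamma h_0$ stays within $O(1)$ of $\gamma$, so the apparently dominant piece $\gamma^2(\gamma^2-b^2)$ in $E_1,E_2$ is only of order $\gamma^3$ and is annihilated by exact cancellation against $-\gamma^2b$, $-b^3$, and the like; the bound $b\ge\gamma-\tfrac{1}{2}$ is exactly sharp enough to reproduce those cancellations.
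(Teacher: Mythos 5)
The paper does not prove this proposition: it is a citation to \cite{Ruggeri-Xiao-Zhao-ARMA-2021}, with no argument reproduced in the text. So there is no ``paper's own proof'' to compare against, and your argument has to stand on its own. On the whole it does. The reduction via the recurrence $K_2 = \tfrac{2}{\gamma}K_1 + K_0$ to $h_1 = \gamma/(2+b)$ with $b = \gamma h_0$, the resulting polynomials $E_1$, $E_2$, their strict monotonicity in $b$, the factorizations $t^3 - 2t^2 - 7t - 4 = (t-4)(t+1)^2$ and $t^3 - t^2 - 5t - 3 = (t-3)(t+1)^2$ after substituting $b = t - 1$ with $t = \sqrt{\gamma^2+1}$, and the cleanup for $\gamma \geq \sqrt{15}$ (resp.\ $\gamma \geq 2\sqrt{2}$) using $b \geq \gamma - \tfrac12$, all check out; I verified each polynomial expansion. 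This is a genuinely different and more streamlined route than the case-by-case interval analysis the paper uses for cognate inequalities (e.g.\ in Appendix~\ref{Appendice1}): you isolate the single structural input, the Tur\'an-type inequality $K_1^2 < K_0 K_2$, i.e.\ $h_0^2 + \tfrac{2}{\gamma}h_0 - 1 > 0$, and then the rest is algebra.

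There are two blemishes worth fixing. First, the normalization factor for the second inequality: multiplying by $(2+b)^2 \cdot \gamma h_1 = \gamma^2(2+b)$ does not clear denominators (it leaves a $\gamma^6/(2+b)^2$ term); the correct factor is $(2+b)^3$. The displayed $E_2$ is, however, exactly what multiplication by $(2+b)^3$ produces, so this is a slip in exposition, not in the mathematics. Second, and more substantively, your argument that $h_0' > 0$ on $(\gamma_0, \sqrt{2}]$ is circular as written: you deduce from the Riccati identity that any critical point of $h_0$ is a strict local maximum, and then assert this ``is impossible for a function increasing monotonically from $0$ to $1$'' --- but that monotonicity is precisely what you are trying to establish. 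The fix is short: since every critical point is a strict local maximum, $h_0$ has at most one critical point $\gamma_*$; if one existed, $h_0$ would decrease on $(\gamma_*, \infty)$ while $h_0(\gamma_*) < 1$ and $h_0 \to 1$ as $\gamma \to \infty$ (from \eqref{remainder}), forcing a local minimum somewhere beyond $\gamma_*$, a contradiction; so $h_0'$ has a constant sign, and since $h_0 \to 0^+$ as $\gamma \to 0$ (from \eqref{k0}--\eqref{k1}) with $h_0 > 0$, that sign is positive. (Alternatively, on the compact window $(\gamma_0,\sqrt{2}]$ one can just verify the Tur\'an inequality directly from the series, as you already suggest.) With that patch in place, the proof is complete and correct for all $\gamma > 0$.
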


\noindent\underline{\textit{ Lower bound of $\frac{K_0(\gamma)}{K_1(\gamma)}$ for $\gamma\in(\gamma_0, \sqrt{2}]$.}} To prove $e_{pp}<0$, besides inequality estimates in Proposition \ref{K01p}, it is essential to obtain a lower bound of $\frac{K_0(\gamma)}{K_1(\gamma)}$ for $\gamma\in(\gamma_0, \sqrt{2}]$.
\begin{proposition}
  For $\gamma\in(\gamma_0, \sqrt{2}]$, $\frac{K_0(\gamma)}{K_1(\gamma)}$ satisfies
  \begin{align}\label{k01-02}
    \frac{K_0(\gamma)}{K_1(\gamma)}\geq \frac{\gamma}{2}.
  \end{align}
\end{proposition}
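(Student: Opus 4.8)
The plan is to combine the first-order (Riccati-type) ODE satisfied by $h_0:=K_0/K_1$ with a single quantitative estimate at the right endpoint $\gamma=\sqrt2$. Recall from \eqref{17} (with $i=0$) that $h_0'(\gamma)=h_0(\gamma)^2+h_0(\gamma)/\gamma-1$ on $(0,\infty)$, and set $\delta(\gamma):=h_0(\gamma)-\gamma/2$, so that the assertion \eqref{k01-02} is $\delta\ge 0$ on $(\gamma_0,\sqrt2]$. The key observation is that at any $\gamma^\ast\in(0,2)$ with $\delta(\gamma^\ast)=0$, i.e. $h_0(\gamma^\ast)=\gamma^\ast/2$, one has
\[
\delta'(\gamma^\ast)=h_0'(\gamma^\ast)-\tfrac12=\Big(\tfrac{\gamma^\ast}{2}\Big)^2+\tfrac12-1-\tfrac12=\frac{(\gamma^\ast)^2}{4}-1<0,
\]
so $\delta$ can cross the level $0$ only downward on $(0,2)$.

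I would first use this to reduce everything to a single inequality at $\gamma=\sqrt2$. Suppose $\delta(\sqrt2)\ge 0$; then $\delta\ge 0$ on all of $(\gamma_0,\sqrt2]$: if $\delta(\gamma_1)<0$ for some $\gamma_1\in(\gamma_0,\sqrt2)$, continuity and $\delta(\sqrt2)\ge 0$ give a smallest zero $\gamma^\ast\in(\gamma_1,\sqrt2]$ of $\delta$ with $\delta<0$ on $[\gamma_1,\gamma^\ast)$, whence $\delta'(\gamma^\ast)\ge 0$ — contradicting the display, since $\gamma^\ast\le\sqrt2<2$. Hence it suffices to prove
\[
h_0(\sqrt2)\ \ge\ \frac1{\sqrt2}\qquad\Longleftrightarrow\qquad \sqrt2\,K_0(\sqrt2)\ \ge\ K_1(\sqrt2).
\]
One may note in passing that $\sqrt2$ is precisely the point where $\gamma/2$ equals the positive root $\tfrac1{2\gamma}(\sqrt{1+4\gamma^2}-1)$ of $h^2+h/\gamma-1=0$, which is why the interval ends there.

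For this remaining inequality I would evaluate the explicit series \eqref{k0}–\eqref{k1} at $\gamma=\sqrt2$, where $(\gamma/2)^2=\tfrac12$ and $\ln(\sqrt2/2)=-\tfrac12\ln2$; writing $\psi(m+1)=-C_E+H_m$ with $H_m$ the $m$-th harmonic number, the two series converge very fast (successive $m$-terms of $K_0$ have ratio $\tfrac{1}{2(m+1)^2}$). I would keep the first three or four terms of $\sqrt2\,K_0(\sqrt2)-K_1(\sqrt2)$, bound the remaining geometric-type tails, and use rational enclosures of $\ln2$ and $C_E$. Since numerically $\sqrt2\,K_0(\sqrt2)-K_1(\sqrt2)\approx0.024>0$, a handful of terms makes the inequality rigorous with room to spare.

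The hard part is exactly this endpoint estimate: for $\gamma\le\gamma_0$ and for $\gamma>\sqrt2$ the Bessel bounds already recalled suffice, but at $\gamma=\sqrt2$ they do not — e.g. \eqref{rough} only yields $h_0(\sqrt2)\ge1-\tfrac1{2\sqrt2}\approx0.65<1/\sqrt2$ — so one genuinely has to estimate the two logarithmic Bessel series at that single point. Everything else (the ODE identity, the sign of $\delta'$ at crossings, and the reduction above) is elementary.
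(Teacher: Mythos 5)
Your reduction via the Riccati identity $h_0' = h_0^2 + h_0/\gamma - 1$ (the paper's \eqref{17} with $i=0$) is correct and genuinely different from the paper's argument. The paper works directly with the logarithmic series \eqref{k0}--\eqref{k1}: it truncates both to produce an explicit polynomial--logarithmic minorant of $K_0/K_1$, reduces \eqref{k01-02} to the scalar inequality $f(\gamma)\geq 0$, and checks this by verifying $f(\sqrt2)>0$ together with $f'<0$ on $(\gamma_0,\sqrt2]$. Your barrier computation --- that at any crossing $h_0(\gamma^\ast)=\gamma^\ast/2$ with $\gamma^\ast<2$ one has $(h_0-\gamma/2)'(\gamma^\ast)=(\gamma^\ast)^2/4-1<0$, so $h_0-\gamma/2$ can cross zero only downward on $(0,2)$ --- dispenses with the monotonicity step entirely and also explains conceptually why $\sqrt2$ is the natural right endpoint (it is exactly where $\gamma/2$ meets the equilibrium of the Riccati flow). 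Both routes ultimately collapse onto a single quantitative endpoint check: you must verify $\sqrt2\,K_0(\sqrt2)\geq K_1(\sqrt2)$ (margin $\approx 0.024$), the paper must verify $f(\sqrt2)>0$ (margin $\approx 0.017$), so the finishing computation is of comparable difficulty. You sketch but do not carry out this endpoint estimate; since the margin is modest, a rigorous proof still requires several terms of each series plus rational enclosures of $\ln 2$ and $C_E$, precisely the kind of bookkeeping the paper performs. The ODE reduction is the real gain: it localizes the explicit work at one point and removes the derivative bound $f'<0$, but the step you flag as hard is indeed the one that remains to be done.
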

\begin{proof} Note that from the definition of $\gamma_0$, for $\gamma\in(\gamma_0, \sqrt{2}]$,
$$\frac{1}{2}-\ln\Big(\frac{\gamma}{2}\Big)-C_\mathrm{E}=\frac{1}{2}+\ln\Big(\frac{\gamma_0}{\gamma}\Big)>0.$$
Then for $\gamma\in(\gamma_0, \sqrt{2}]$, we get from \eqref{k0} and \eqref{k1} that
\begin{equation*}
	\frac{K_0(\gamma)}{K_1(\gamma)}\geq \frac{\ln\big(\frac{\gamma_0}{\gamma}\big)+\frac{\gamma^2}{4}\big[1+\ln\big(\frac{\gamma_0}{\gamma}\big)\big]
+\frac{\gamma^4}{64}\big[\frac{3}{2}+\ln\big(\frac{\gamma_0}{\gamma}\big)\big]
	} {\frac{1}{\gamma}-\frac{\gamma}{2}\big[\frac{1}{2}+\ln\big(\frac{\gamma_0}{\gamma}\big)\big]-\frac{\gamma^3}{16}
\big[\frac{5}{4}+\ln\big(\frac{\gamma_0}{\gamma}\big)\big]-\frac{\gamma^5}{384}\big[\frac{5}{3}+\ln\big(\frac{\gamma_0}{\gamma}\big)\big]
	}.
	\end{equation*}
Via direct computations, we can obtain that to prove \eqref{k01-02}, it suffices to show
\begin{align*}
f(\gamma):=&-\frac{1}{2}+\frac{3}{8}\gamma^2+\frac{1}{16}\gamma^4
+\frac{41}{384\times36}\gamma^6\nonumber\\
&+\Big[1+\frac{1}{2}\gamma^2+\frac{3}{64}\gamma^4+\frac{19}{384\times36}\gamma^6\Big]\ln\Big(\frac{\gamma_0}{\gamma}\Big)\geq0
\end{align*}
for $\gamma\in(\gamma_0, \sqrt{2}]$. We have
\begin{align*}
f'(\gamma)=&-\frac{1}{\gamma}+\frac{\gamma}{4}+\frac{13}{64}\gamma^3+\frac{41}{2304}\gamma^5
+\Big[\gamma+\frac{3}{16}\gamma^3+\frac{19}{2304}\gamma^5\Big]
\ln\Big(\frac{\gamma_0}{\gamma}\Big).
\end{align*}
It is straightforward to verify that $f(\sqrt{2})>0$ and $f'(\gamma)<0$ for $\gamma\in(\gamma_0, \sqrt{2}]$. Then \eqref{k01-02} holds.
\end{proof}

\subsection{Asymptotic Results}
We further want to provide detailed calculations concerning the asymptotic behavior of expressions involving the Bessel functions.
In \eqref{e_pddgammap} we need to discuss the expression
\begin{align*}
	\frac{e_p}{(e + p)}\frac{\dd p}{\dd \gamma} = \gamma \left(h^2_i - 1\right) + (2i + 1) h_i - \frac{3}{\gamma}.
\end{align*}
Since the situation for $p = 0$ is studied at this point in the proof of Lem.\ \ref{lem:no_zeros} we study the expression for large $\gamma$ we yield the following asymptotic behavior at the aid of \eqref{acurate}
\begin{align}
	\begin{split}\label{asymptotic_1}
		\gamma \left(h^2_0 -1\right)+ h_0-\frac{3}{\gamma}
		&= \gamma \left(-\frac{1}{\gamma} + \frac{1}{\gamma^2}\right) + \left(1-\frac{1}{2\gamma}\right) - \frac{3}{\gamma} +\ldots
		= -\frac{5}{2\gamma}+\ldots,\\
   		\gamma \left(h^2_1 - 1\right) + 3 h_1 - \frac{3}{\gamma}
   		&= \gamma \left(-\frac{3}{\gamma} + \frac{6}{\gamma^2}\right) + 3 \left(1 - \frac{3}{2\gamma}\right) - \frac{3}{\gamma}+\ldots
   		= -\frac{3}{2\gamma}+\ldots\, .
   	\end{split}
\end{align}
In the proof of the subsequent Lem.\ \ref{lem:u_eq_s} we study \eqref{ddgammap}
\begin{align*}
	\frac{1}{c^2(e + p)}\dd p = \frac{\gamma^2 \left(h^2_i - 1\right) + (2i+1)\gamma h_i - 4}{c^2\gamma(\gamma	h_i + 4)}\dd \gamma.
\end{align*}
For $\gamma \to \infty$ we have again using \eqref{acurate}
\begin{align}
	\begin{split}\label{asymptotic_2}
		\frac{\gamma^2 \left(h^2_0 - 1\right) + \gamma h_0 - 4}{\gamma(\gamma h_0 + 4)}
		&= \frac{\gamma^2 \left(-\frac{1}{\gamma} + \frac{1}{\gamma^2}\right) + \gamma \left(1 - \frac{1}{2\gamma}\right) - 4 +\ldots}{\gamma\left[4 + \gamma \left(1 - \frac{1}{2\gamma} + \ldots\right)\right]}
		= \frac{-\frac{7}{2} + \ldots}{\gamma^2 + \frac{7\gamma}{2} +\ldots},\\
	   \frac{\gamma^2 \left(h^2_1 - 1\right) + 3\gamma h_1 - 4}{\gamma(\gamma h_1 + 4)} &= \frac{\gamma^2 \left(-\frac{3}{\gamma} + \frac{6}{\gamma^2}\right) + 3\gamma \left(1 - \frac{3}{2\gamma}\right) - 4 + \ldots}{\gamma\left[4 + \gamma \left(1 - \frac{3}{2\gamma} + \ldots\right)\right]}
	   = \frac{-\frac{5}{2} + \ldots}{\gamma^2 + \frac{5\gamma}{2} + \ldots}
	\end{split}
\end{align}
For the proof of Lem.\ \ref{lem:case_1} we need to study the expression
\begin{align*}
    \frac{\sqrt{e_p}}{e + p} &= -\gamma_p\left[\frac{(\gamma^2\Phi'_i(\gamma) - 1)\Phi_i'(\gamma)}{\gamma\Phi_i(\gamma) + 1}\right]^{\frac{1}{2}}.
\end{align*}
Similar to the previous estimation for \eqref{ddgammap},  when $\gamma$ is large, one has
\begin{align}
	\frac{(\gamma^2\Phi'_i(\gamma) - 1)\Phi_i'(\gamma)}{\gamma\Phi_i(\gamma) + 1}
	&=\frac{\left[\gamma	^2 \left(h^2_i -1\right)+(2i+1)\gamma h_i-4\right]\left[\gamma^2 \left(h^2_i -1\right)+(2i+1)\gamma h_i-3\right]}{\gamma^3(\gamma h_i+4)}\notag\\
    &=\frac{\frac{(5-2i)(5-2i+2)}{4}+\ldots}{\gamma^3\left(\frac{7-2i}{2}+\ldots\right)}.\label{asymptotic_3}
\end{align}
%
%
\section{The proof of $e_{pp}<0$} \label{Appendice1}
 In this section, we will prove the crucial inequality $e_{pp}<0$ for monatomic gases case and diatomic gases case in a successive way.
%
\subsection{Proof of $e_{pp|\text{mono}}<0$} 
With the preparations above, we are at position of the proof of $e_{pp|\text{mono}}<0$.
Note that
\begin{align}\label{pg}
-\gamma
		^2+\gamma  h_1 (\gamma
		h_1+3)-4<0
\end{align}
from \eqref{imp-ine1}.
From the proof of \cite[Proposition 5]{Ruggeri-Xiao-Zhao-ARMA-2021}, we know that
\begin{align}
I_1(\gamma)<0,\qquad I_2(\gamma)>0. \label{i12}
\end{align}
Collecting \eqref{pg} and \eqref{i12} in \eqref{epp-mon}, one can conclude that in order to show $e_{pp|\text{mono}}<0$, it suffices to prove $I_3(\gamma)<0$, which can be transformed into the following inequality w.r.t. $h_0$:
\begin{align}\label{ig}
I(\gamma):= (\gamma+1)h_0^2+\frac{4}{\gamma}h_0-\left(\gamma+\frac{4}{\gamma}
-\frac{4}{\gamma^2}\right)<0
\end{align}
by \eqref{transform}. Now we prove \eqref{ig} in the following four cases:
$$\gamma\in (0,  \gamma_0];\qquad \gamma\in(\gamma_0, \sqrt{2}];\qquad \gamma\in(\sqrt{2}, 3);\qquad \gamma\in[3, \infty).$$
\noindent $\bullet$ Case 1 ( $\gamma\in (0,  \gamma_0]$): From \eqref{low1}, we have
\begin{align*}
  I(\gamma)\geq& (\gamma+1)-\left(\gamma+\frac{4}{\gamma}
-\frac{4}{\gamma^2}\right)+\left[\frac{4}{\gamma}-(\gamma+1)\right]h_0\\
>& 1- \frac{4}{\gamma}
+\frac{4}{\gamma^2}=\left(1-\frac{2}{\gamma}\right)^2>0.
\end{align*}

\noindent $\bullet$ Case 2 ( $\gamma\in (\gamma_0,  \sqrt{2}]$): From \eqref{k01-02}, one has
\begin{align*}
  I(\gamma)\geq& (\gamma+1)\frac{\gamma^2}{4}+2-\left(\gamma+\frac{4}{\gamma}
-\frac{4}{\gamma^2}\right)\\
>& \left(\frac{\gamma^2}{4}-\gamma+1\right)+\left(1- \frac{4}{\gamma}
+\frac{4}{\gamma^2}\right)>0.
\end{align*}
\noindent $\bullet$ Case 3 ( $\gamma\in (\sqrt{2}, 3)$): From \eqref{rough}, we can obtain
\begin{align*}
  I(\gamma)\geq& (\gamma+1)\left(1-\frac{1}{2\gamma}\right)^2+\frac{4}{\gamma}\left(1-\frac{1}{2\gamma}\right)-\left(\gamma+\frac{4}{\gamma}
-\frac{4}{\gamma^2}\right)\\
=& - \frac{3}{4\gamma}
+\frac{9}{4\gamma^2}=\frac{3}{4\gamma}\left(\frac{3}{\gamma}-1\right)>0.
\end{align*}
\noindent $\bullet$ Case 4 ( $\gamma\in [3, \infty)$):
From \eqref{acurate}, we can obtain that
$$h_0\geq 1-\frac{1}{2\gamma}+\frac{3}{8\gamma^2}-\frac{1}{2\gamma^3},$$
and
\begin{align*}
  I(\gamma)\geq& (\gamma+1)\left(1-\frac{1}{2\gamma}+\frac{3}{8\gamma^2}-\frac{1}{2\gamma^3}\right)^2
  +\frac{4}{\gamma}\left(1-\frac{1}{2\gamma}+\frac{3}{8\gamma^2}-\frac{1}{2\gamma^3}\right)-\left(\gamma+\frac{4}{\gamma}
-\frac{4}{\gamma^2}\right)\\
=& (\gamma+1)\left(1-\frac{1}{\gamma}+\frac{1}{\gamma^2}-\frac{11}{8\gamma^3}+\frac{41}{64\gamma^4}-
\frac{3}{8\gamma^5}+\frac{1}{4\gamma^3}\right)-\gamma+\frac{2}{\gamma^2} +\frac{3}{2\gamma^3}-\frac{2}{\gamma^4}\\
>&\frac{13}{8\gamma^2} +\frac{1}{8\gamma^3}-\frac{2}{\gamma^4}>0.
\end{align*}
%
%
\subsection{Proof of $e_{pp|\text{dia}}<0$}\label{app:eppd}
Now we come to prove $e_{pp|\text{dia}}<0$. For this purpose, We first show that
\begin{align}\label{eppd-den}
  \gamma
			\left(\gamma
			\left(h_0^2-1\right)+
				h_0\right)-4<0 .
\end{align}
Noting that $h_0< 1$, it is straightforward to verify \eqref{eppd-den} for $\gamma\leq 4$. When $\gamma>4$, from \eqref{acurate}, we have $h_0\leq 1-\frac{1}{2\gamma}+\frac{3}{8\gamma^2}$. Then we can obtain that for $\gamma>4$,
\begin{align*}
  \gamma \left(\gamma
			\left(h_0^2-1\right)+
				h_0\right)-4 \leq& \gamma^2\left(1-\frac{1}{2\gamma}+\frac{3}{8\gamma^2}\right)^2-\gamma^2
+\gamma\left(1-\frac{1}{2\gamma}+\frac{3}{8\gamma^2}\right) -4 \\
  \leq & -\gamma+1-\frac{3}{8\gamma}+\frac{9}{64\gamma^2}-\gamma-\frac{9}{2}+\frac{3}{8\gamma}<-3.
\end{align*}
Therefore \eqref{eppd-den} holds. Combing \eqref{eppd-den} and the expression of $e_{pp|\text{dia}}$ in \eqref{epp-dia}, we infer that to show $e_{pp|\text{dia}}<0$  is equivalent to show that
\begin{align}\label{f-dia}
  f_{\text{dia}} >0\quad \mbox{or} \quad \gamma f_{\text{dia}}>0.
\end{align}
Now we prove \eqref{f-dia} in the following four cases:
$$\gamma\in (0,  \gamma_0];\qquad \gamma\in(\gamma_0, \sqrt{2}];\qquad \gamma\in(\sqrt{2}, 2);\qquad \gamma\in[2, \infty).$$
\noindent $\bullet$ Case 1 ( $\gamma\in (0,  \gamma_0]$): In this case, for convenience of our proof, we further divide it into three subcases:
$$\gamma\in \left(0,  \frac{1}{2}\right];\qquad \gamma\in\left(\frac{1}{2}, \frac{9}{10}\right];\qquad \gamma\in\left(\frac{9}{10}, \gamma_0\right).$$
Note that  \eqref{low1} holds in this case and it is valid for any $\gamma>0$. Now we verify \eqref{f-dia} for the above three subcases one by one.

\noindent\underline{\textit{Subcase 1.1 ($\gamma\in \left(0,  \frac{1}{2}\right]$):}}
Now we rewrite $f_{\text{dia}}$ as
\begin{align}
f_{\text{dia}}=:&\left(h_0^2+\frac{2}{\gamma}h_0-1\right)f_1+2\gamma h_0+8,\label{fdf1}
\end{align}
where
$$f_1:=\gamma^4h_0^4+2\gamma^3h_0^3
-\left(2\gamma^4+8\gamma^2\right)
h_0^2-\left(2\gamma^3+16\gamma^2\right)
h_0.$$
Noting that $f_1$ decrease w.r.t. $h_0$,  we use the upper bound estimate of $h_0$ in \eqref{uplow-b1} to have
\begin{align*}
  f_1 \geq g_1&= \gamma^8\left(\frac{11}{16}-\ln\left(\frac{\gamma}{2}\right)-C_E\right)^4
  +2\gamma^6\left(\frac{11}{16}-\ln(\frac{\gamma}{2})-C_E\right)^3\\
  &-\left(2\gamma^6+8\gamma^4\right)
  \left(\frac{11}{16}-\ln\left(\frac{\gamma}{2}\right)-C_E\right)^2-\left(2\gamma^4+16\gamma^2\right)
  \left(\frac{11}{16}-\ln\left(\frac{\gamma}{2}\right)-C_E\right)\\
  &+\gamma^4+7\gamma^2+8.
\end{align*}
Note that $\frac{11}{16}-\ln\left(\frac{\gamma}{2}\right)-C_E$ decrease w.r.t. $\gamma$ and
$$\frac{11}{16}-\ln\left(\frac{1}{4}\right)-C_E<\frac{3}{2}.$$
It is straightforward to verify that
$$f_1 \geq g_1(\gamma)>0,\qquad \gamma\in \left(0,  \frac{1}{2}\right].$$
This together with \eqref{fdf1} implies \eqref{f-dia}.

\noindent\underline{\textit{Subcase 1.2 ($\gamma\in \left(\frac{1}{2},  \frac{9}{10}\right]$):}}
By direct computation, we have
\begin{align*}
   g'_1(\gamma)=& 8\gamma^7\left(\frac{11}{16}-\ln\left(\frac{\gamma}{2}\right)-C_E\right)^4
  +\left(12\gamma^5 -4\gamma^7\right)\left(\frac{11}{16}-\ln\left(\frac{\gamma}{2}\right)-C_E\right)^3\\
  &-\left(18\gamma^5+32\gamma^3\right)
  \left(\frac{11}{16}-\ln\left(\frac{\gamma}{2}\right)-C_E\right)^2+6\gamma^3+30\gamma\\
  &+\left(4\gamma^5+8\gamma^3-32\gamma\right)
  \left(\frac{11}{16}-\ln\left(\frac{\gamma}{2}\right)-C_E\right).
\end{align*}
When $\gamma\in\left(\frac{1}{2}, \overline{\gamma}=0.821545\ldots\right]$ with $\overline{\gamma}$ satisfying
$$\frac{11}{16}-\ln(\frac{\overline{\gamma}}{2})-C_E=1,$$
it is easy to see that $g'_1(\gamma)<0$. While for $\gamma\in\left(\overline{\gamma},\frac{9}{10}\right]$ and $\widetilde{\gamma}=0.908792\ldots$ satisfying
$$\frac{11}{16}-\ln(\frac{9}{20})-C_E=\widetilde{\gamma},$$
 it holds that
\begin{align*}
   g'_1(\gamma)\leq& 2\gamma\left[-16\gamma^2\widetilde{\gamma}^4
  +\left(2\gamma^4+4\gamma^2-16\right)\widetilde{\gamma}+3\gamma^2+15\right]\\
  \leq&2\gamma\left[-13.21\gamma^2+1.8176\gamma^4+3.6352\gamma^2+3\gamma^2-0.46
  \right].
\end{align*}
Then we can conclude that $g'_1(\gamma)<0$ for $\gamma\in \left(\frac{1}{2},  \frac{9}{10}\right]$. Now we can further obtain
\begin{align*}
  f_1 \geq g_2=& \gamma^8\widetilde{\gamma}^4
  +2\gamma^6\widetilde{\gamma}^3-\left(2\gamma^6+8\gamma^4\right)
  \widetilde{\gamma}^2-\left(2\gamma^4+16\gamma^2\right)\widetilde{\gamma}+\gamma^4+7\gamma^2+8\\
  \geq&(0.9)^8\widetilde{\gamma}^4
  +2\times (0.9)^6\widetilde{\gamma}^3-\left(2\times(0.9)^6+8\times(0.9)^4\right)
  \widetilde{\gamma}^2\\
  &-\left(2\times(0.9)^4+16\times(0.9)^2\right)\widetilde{\gamma}+(0.9)^4+7\times(0.9)^2+8\\
  \geq&-2.77
\end{align*}
for $\gamma\in \left(\frac{1}{2},  \frac{9}{10}\right]$, where we used $g_2(\gamma)<0$ in the second inequality. Inserting the estimate of $f_1$ into \eqref{fdf1}, we use the upper bound estimate in \eqref{uplow-b1} to have
\begin{align}
f_{\text{dia}}\geq&-2.77\left(h_0^2+\frac{2}{\gamma}h_0-1\right)+2\gamma h_0+8\nonumber\\
\geq&10.77-2.77\gamma^2\left(\frac{11}{16}-\ln\left(\frac{\gamma}{2}\right)-C_E\right)^2
-\left(5.54-2\gamma^2\right)\left(\frac{11}{16}-\ln\left(\frac{\gamma}{2}\right)-C_E\right)\nonumber\\
=:&10.77-g_3(\gamma)\geq10.77-g_3(0.5)\nonumber\\
\geq&10.77-\frac{2.77}{4}\times\frac{9}{4}
-\left(5.54-0.5\right)\times\frac{3}{2}\geq 1.65.\label{fcase12}
\end{align}
Here we used the fact for $\gamma\in \left(\frac{1}{2},  \frac{9}{10}\right]$,
\begin{align*}
  g'_3(\gamma)= & -5.54\gamma\left(\frac{11}{16}-\ln\left(\frac{\gamma}{2}\right)-C_E\right)^2
  +5.54\gamma\left(\frac{11}{16}-\ln\left(\frac{\gamma}{2}\right)-C_E\right)\\
  &+4\gamma\left(\frac{11}{16}-\ln\left(\frac{\gamma}{2}\right)-C_E\right)+\frac{5.54-2\gamma^2}{\gamma}>0.
\end{align*}

\noindent\underline{\textit{Subcase 1.3 ($\gamma\in \left(\frac{9}{10},  \gamma_0\right]$):}} Note that for $\gamma\in \left(\frac{9}{10},  \gamma_0\right]$,
$$\left[\gamma\left(\frac{11}{16}-\ln\left(\frac{\gamma}{2}\right)-C_E\right)\right]_{\gamma}
=\frac{11}{16}-\ln\left(\frac{\gamma}{2}\right)-C_E-1<0.$$
This together with \eqref{uplow-b1} implies that for $\gamma\in \left(\frac{9}{10},  \gamma_0\right]$,
$$h_0\leq \gamma\left(\frac{11}{16}-\ln\left(\frac{\gamma}{2}\right)-C_E\right)\leq 0.9\widetilde{\gamma}.$$
Since $f_1$ decrease w.r.t. $h_0$, we can obtain
\begin{align*}
  f_1 \geq&\gamma^4(0.9)^4\widetilde{\gamma}^4
  +2\gamma^3\times (0.9)^3\widetilde{\gamma}^3-\left(2\gamma^4+8\gamma^2\right)\times 0.81
  \widetilde{\gamma}^2\\
  &-\left(2\gamma^3+16\gamma\right)\times0.9\widetilde{\gamma}+\gamma^4+7\gamma^2+8\\
  \geq&\gamma_0^4(0.9)^4\widetilde{\gamma}^4
  +2\gamma_0^3\times (0.9)^3\widetilde{\gamma}^3-\left(2\gamma_0^4+8\gamma_0^2\right)\times 0.81
  \widetilde{\gamma}^2\\
  &-\left(2\gamma_0^3+16\gamma_0\right)\times0.9\widetilde{\gamma}+\gamma_0^4+7\gamma_0^2+8\\
  \geq&-5.21
\end{align*}
As the derivation of \eqref{fcase12}, we can obtain that for $\gamma\in \left(\frac{9}{10},  \gamma_0\right]$,
\begin{align*}
f_{\text{dia}}\geq&-5.21\left(h_0^2+\frac{2}{\gamma}h_0-1\right)+2\gamma h_0+8\nonumber\\
\geq&13.21-5.21\times0.81\widetilde{\gamma}^2
-\left(10.42-1.62\right)\widetilde{\gamma}\geq 1.72.
\end{align*}

\noindent $\bullet$ Case 2 ( $\gamma\in (\gamma_0,  \sqrt{2}]$): We use the lower bound estimate \eqref{k01-02} to have
\begin{align*}
  f_1 \geq g_4=& \frac{\gamma^8}{16}+\frac{\gamma^6}{4}-\frac{\gamma^6+4\gamma^4}{2}-\gamma^4-8\gamma^2+\gamma^4+7\gamma^2+8\\
  =&\frac{\gamma^8}{16}-\frac{\gamma^6}{4}-2\gamma^4-\gamma^2+8
  \geq g_4(\sqrt{2})=-3,
\end{align*}
where we used the monotonicity of $g_4(\gamma)$ for $\gamma\in (\gamma_0,  \sqrt{2}]$. Now we combine the estimate of $f_1$ and \eqref{fdf1}, and further use the upper bound estimate in \eqref{new1} to have
\begin{align}
f_{\text{dia}}\geq&-3\left(h_0^2+\frac{2}{\gamma}h_0-1\right)+2\gamma h_0+8\nonumber\\
\geq&11-3\gamma^2\left(1-\frac{\gamma_0-1}{\gamma}\right)^2
-\left(\frac{6}{\gamma}-2\gamma\right)\left(1-\frac{\gamma_0-1}{\gamma}\right)\nonumber\\
=&2\gamma+10-2\gamma_0-\frac{6(2-\gamma_0)}{\gamma}+\frac{(9-3\gamma_0)(\gamma_0-1)}{\gamma^2}\nonumber\\
\geq&10-\frac{6(2-\gamma_0)}{\gamma_0}+\frac{(9-3\gamma_0)(\gamma_0-1)}{\gamma_0^2}> 0.\nonumber
\end{align}

\noindent $\bullet$ Case 3 ( $\gamma\in (\sqrt{2}, 2]$): Now we rewrite $f_{\text{dia}}$ as
\begin{align*}
  f_{\text{dia}}=&-\gamma ^5 \left(1-h_0^2\right)^3+4 \gamma ^4 h_0
			\left(h_0^2-1\right)^2+32 \gamma ^2 h_0 \left(1-h_0^2\right)\\
&+4\gamma^3\left(1+h_0^2\right)\left(1-h_0^2\right)+\left(11\gamma^3-24
			\gamma\right) h_0^2+16
			h_0-11\gamma ^3.
\end{align*}
Noting that $f_{\text{dia}}$ decrease w.r.t. $1-h_0^2$, we use the lower bound estimate of $h_0$ in \eqref{rough} to have
\begin{align*}
  f_{\text{dia}}\geq&-\gamma ^5 \left(\frac{1}{\gamma}-\frac{1}{4\gamma^2}\right)^3+4 \gamma ^4 h_0
			\left(\frac{1}{\gamma}-\frac{1}{4\gamma^2}\right)^2+32 \gamma ^2 h_0 \left(\frac{1}{\gamma}-\frac{1}{4\gamma^2}\right)\\
&+4\gamma^3\left(1+h_0^2\right)\left(\frac{1}{\gamma}-\frac{1}{4\gamma^2}\right)+\left(11\gamma^3-24
			\gamma\right) h_0^2+16
			h_0-11\gamma ^3\\
=&\left(11\gamma^3+4\gamma^2-25
			\gamma\right) h_0^2+\left(4\gamma^2+30\gamma+\frac{33}{4}\right)
			h_0-11\gamma ^3+3\gamma^2\\
&-\frac{\gamma}{4}-\frac{3}{16}+\frac{1}{64\gamma}=:g_5.
\end{align*}
Obviously, $g_5$ increase w.r.t. $h_0$. Then we further use the lower bound estimate of $h_0$ in \eqref{rough} to obtain
\begin{align*}
  f_{\text{dia}}\geq&\left(11\gamma^3+4\gamma^2-25
			\gamma\right) \left(1-\frac{1}{2\gamma}\right)^2+\left(4\gamma^2+30\gamma+\frac{33}{4}\right)
			\left(1-\frac{1}{2\gamma}\right)\\
&-11\gamma ^3+3\gamma^2-\frac{\gamma}{4}-\frac{3}{16}+\frac{1}{64\gamma}
\geq\frac{3\gamma}{2}+19\frac{1}{16}-\frac{83}{8\gamma}>0.
\end{align*}

\noindent $\bullet$ Case 4 ( $\gamma\in (2, \infty)$): Since $f_1$ decrease w.r.t. $h_0$,  we use the upper bound estimate of $h_0$ in \eqref{acurate} to have
\begin{align*}
  \gamma f_1 \geq & \gamma^5\left(1-\frac{1}{2\gamma}+\frac{1}{8\gamma^2}\right)^4
  +2\gamma^4\left(1-\frac{1}{2\gamma}+\frac{1}{8\gamma^2}\right)^3-\left(2\gamma^5+8\gamma^3\right)
  \left(1-\frac{1}{2\gamma}+\frac{1}{8\gamma^2}\right)^2\\
  &-\left(2\gamma^4+16\gamma^2\right)
  \left(1-\frac{1}{2\gamma}+\frac{1}{8\gamma^2}\right)+\gamma^4+7\gamma^2+8\\
  \geq&-2\gamma^3-7\gamma^2+\frac{23\gamma}{2}-1.
\end{align*}
With this estimate, we can bound $\gamma f_{\text{dia}}$ as
\begin{align*}
 \gamma f_{\text{dia}}\geq&\left(-2\gamma^3-7\gamma^2+\frac{23\gamma}{2}-1\right)\left(h_0^2+\frac{2}{\gamma}h_0-1\right)
 +2\gamma^2 h_0+8\gamma\\
 =&\left(-2\gamma^3-7\gamma^2+\frac{23\gamma}{2}-1\right)h_0^2+\left(-2\gamma^2-14\gamma+23-\frac{2}{\gamma}\right)h_0
 +2\gamma^3-7\gamma^2-\frac{7\gamma}{2}+1.
\end{align*}
Noting that $\gamma f_{\text{dia}}$ decrease w.r.t. $h_0$, we further use the upper bound of $h_0$ in \eqref{acurate} to bound it as
\begin{align*}
 \gamma f_{\text{dia}}\geq
 &\left(-2\gamma^3-7\gamma^2+\frac{23\gamma}{2}-1\right)\left(1-\frac{1}{2\gamma}+\frac{3}{8\gamma^2}
 -\frac{3}{8\gamma^3}+\frac{63}{128\gamma^4}+\frac{7}{8\gamma^5}
 \right)^2\\
 &+\left(-2\gamma^2-14\gamma+23-\frac{2}{\gamma}\right)\left(1-\frac{1}{2\gamma}+\frac{3}{8\gamma^2}
 -\frac{3}{8\gamma^3}+\frac{63}{128\gamma^4}+\frac{7}{8\gamma^5}
 \right)\\
 &+2\gamma^3-7\gamma^2-\frac{7\gamma}{2}+1=13-\frac{5}{8\gamma}+\ldots>0.
\end{align*}
%
%
%
\section{Auxiliary results for the case $u_0 > 0$}\label{app:aux_res_u_pos}
The following lemme is important as it provides information about the term $u - sc^2$ which is needed throughout the other proofs as it appears in the numerator of $\eqref{systemODE}_1$.
\begin{lemma}\label{lem:u_eq_s}
	Let $u$ be a solution defined on $(0,1/c]$ with initial condition $u_0 \in (0,c)$.
	Then there exists a unique $\overline{s} \in (0,1/c)$ such that
	\[
	u(\overline{s}) = \overline{s} c^2.
	\]
\end{lemma}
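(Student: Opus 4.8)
The plan is to analyze the scalar function $\phi(s):=u(s)-sc^{2}$ on $(0,1/c]$. By Lemma~\ref{lem:no_zeros} the solution of \eqref{systemODE} with datum $u_{0}\in(0,c)$ exists on this interval and satisfies $0<u(s)<c$ together with $g(s)>0$. Existence of $\overline{s}$ will follow from an intermediate value argument for $\phi$, and uniqueness from the fact that $\phi$ is strictly decreasing at each of its zeros.

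For existence I would first record the signs of $\phi$ at the two ends of the interval. Near $s=0$ the right-hand side of \eqref{systemODE} is smooth, since $g|_{s=0}=c^{2}e_{p}-u_{0}^{2}\ge 3c^{2}-c^{2}>0$ by $e_{p}>3$ and $|u_{0}|<c$; hence $u$ extends continuously with $\lim_{s\to 0^{+}}u(s)=u_{0}$, so $\phi(0^{+})=u_{0}>0$. At the other end, Lemma~\ref{lem:no_zeros} gives $u(1/c)<c$, so $\phi(1/c)=u(1/c)-c<0$. Since $\phi$ is continuous, the intermediate value theorem produces $\overline{s}\in(0,1/c)$ with $\phi(\overline{s})=0$, i.e.\ $u(\overline{s})=\overline{s}c^{2}$; moreover $0<u(\overline{s})=\overline{s}c^{2}<c$ forces $\overline{s}\in(0,1/c)$, consistent with the claim.

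For uniqueness the key observation is that whenever $\phi(s_{\ast})=0$ the factor $(u-s_{\ast}c^{2})$ in the numerator of $\eqref{systemODE}_1$ vanishes while $g(s_{\ast})>0$, so $u'(s_{\ast})=0$ and therefore $\phi'(s_{\ast})=-c^{2}<0$. Thus every zero of $\phi$ is a strict down-crossing, and a standard elementary argument rules out two of them: if $s_{1}<s_{2}$ were both zeros, then $\phi<0$ just to the right of $s_{1}$, and setting $s_{3}:=\inf\{s>s_{1}:\phi(s)=0\}\le s_{2}$ one would have $\phi<0$ on $(s_{1},s_{3})$ and $\phi(s_{3})=0$ by continuity, forcing $\phi'(s_{3})\ge 0$ and contradicting $\phi'(s_{3})=-c^{2}$. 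Hence $\overline{s}$ is unique. I do not expect a genuine obstacle here: essentially all the analytic content — positivity of $g$ and the a priori bound $0<u<c$ on $(0,1/c]$ — is already contained in Lemma~\ref{lem:no_zeros}, and what remains is the sign bookkeeping at the endpoints and the elementary topology of zeros of $\phi$.
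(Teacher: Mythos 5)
Your proof is correct, and it takes a genuinely different and more direct route than the paper's. The paper argues by contradiction: assuming no crossing exists, so that $u(s)>sc^{2}$ on all of $(0,1/c]$, it squeezes $u(s)\to c$ as $s\to 1/c$, then couples $\eqref{systemODE}_1$ and $\eqref{systemODE}_2$ to compare the pressure integral $\int \dd p/(c^{2}(e+p))$ (shown finite via the change of variables to $\gamma$ and the asymptotics \eqref{asymptotic_2}) against the velocity integral $\int \dd u/(c(c^{2}-u^{2}))$ (which diverges as $u\to c$), obtaining a contradiction. You instead invoke the a priori bound $u(1/c)<c$ from Lemma~\ref{lem:no_zeros} and obtain existence in one line from the intermediate value theorem applied to $\phi(s)=u(s)-sc^{2}$; this is simpler and relies on no constitutive information beyond what Lemma~\ref{lem:no_zeros} already provides. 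Both arguments use the same basic sign facts about $g$ and the numerator of $\eqref{systemODE}_1$. On uniqueness you actually say more than the paper does explicitly: the appendix proof establishes only existence, with uniqueness left implicit in the remark that $u-sc^{2}>0 \Leftrightarrow u'>0$; your computation that $\phi'(s_{\ast})=-c^{2}<0$ at every zero, combined with the standard no-second-crossing argument, makes this part explicit and complete. One point worth being aware of: your IVT step depends entirely on Lemma~\ref{lem:no_zeros} delivering $u(1/c)<c$ at the endpoint $s=1/c$ itself; the paper's self-contained integral argument does not lean on that endpoint value and so sidesteps any subtlety in the boundary case $s^{\ast}=1/c$ of Lemma~\ref{lem:no_zeros}. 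Within the paper's framework, where Lemma~\ref{lem:no_zeros} is taken as proved, your argument is fine.
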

\begin{proof}
   	Since $s = 0 < u_0 = u(0)$, we have for suitably small $s$,
   	\[
   	u(s) - sc^2 > 0\quad\Leftrightarrow\quad\frac{\dd u}{\dd s} > 0.
   	\]
   	Let $w(s) = s c^2$ and  assume such a point $\overline{s}$ does not exist. Clearly we have $0 < w(s) < u(s) < c$ and further know that
   	\[
   	\lim_{w(s)\to c}u(s) = \lim_{s\to  {1}/{c}}u(s) =c.
   	\]
   	From $\eqref{systemODE}_1$ and $\eqref{systemODE}_2$, we get
   	\begin{align*}
   		\frac{1}{c^2(us - 1)(e + p)}\frac{\dd p}{\dd s} &= \frac{(d-1)u}{g} = \frac{1}{(c^2 - u^2)(u - sc^2)}\frac{\dd u}{\dd s}\\
   		\Leftrightarrow\quad \frac{1}{c^2(e + p)}\dd p &= \frac{us - 1}{(c^2 - u^2)(u - sc^2)}\dd u.
   	\end{align*}
   	 	Due to $u/c < 1$, we have
   	\[
   	us - 1 < cs - 1 < cs - \frac{u}{c} = -\frac{1}{c}(u - sc^2).
   	\]
   	This implies $\dd p/\dd s < 0$ and thus we can further obtain
   	\[
   	\frac{1}{c^2(e + p)}\dd p = \frac{us - 1}{(c^2 - u^2)(u - sc^2)}\dd u < -\frac{1}{c(c^2 - u^2)}\dd u.
   	\]
   	Similar to \eqref{e_pddgammap}, it follows from \eqref{syngeg},  \eqref{dgamma1}, and \eqref{17} that
    \begin{equation}\label{ddgammap}
   \frac{1}{c^2(e + p)}\dd p=	\frac{\gamma
   		^2 \Phi'_i -1}{c^2\gamma(1+\gamma
   		\Phi_i)}\dd \gamma=	\frac{\gamma
   		^2 \left(h^2_i -1\right)+(2i+1)\gamma h_i-4}{c^2\gamma(\gamma
   		h_i+4)}\dd \gamma.
   \end{equation}
	The detailed asymptotic expansion \eqref{asymptotic_2} shows that this expression becomes a bounded integrand for large $\gamma$.   
   	For a small $\varepsilon>0$, we integrate from $p_0 = p(0)$ up to $\bar{p} = p(1/c - \varepsilon)$ and denote $\gamma_0 = \gamma(0)$. Using \eqref{ddgammap} gives
   	\begin{align*}
   		\int_{p_0}^{\bar{p}}\frac{1}{c^2(e + p)}\dd p > -\int_{0}^{p_0}\frac{1}{c^2(e + p)}\dd p = \int_{\gamma_0}^{\infty} \frac{\gamma
   		^2 \left(h^2_i -1\right)+(2i+1)\gamma h_i-4}{c^2\gamma(\gamma
   		h_i+4)}\dd \gamma
   	\end{align*}
   	which is a finite value. Further we have with $u_0 = u(0)$ and $\bar{u} = u(1/c - \varepsilon)$
   	\begin{align*}
   		\int_{u_0}^{\bar{u}}\frac{us - 1}{(c^2 - u^2)(u - sc^2)}\dd u < -\int_{u_0}^{\bar{u}}\frac{1}{c(c^2 - u^2)}\dd u = -\frac{1}{2c^2}\left.\ln\left(\frac{c + u}{c - u}\right)\right|^{\bar{u}}_{u_0}.
   	\end{align*}
   	This gives in total the following inequality
   	\begin{align*}
   		\int_{\gamma_0}^{\infty} \frac{\gamma
   		^2 \left(h^2_i -1\right)+(2i+1)\gamma h_i-4}{c^2\gamma(\gamma
   		h_i+4)}\dd \gamma < -\frac{1}{2c^2}\left.\ln\left(\frac{c + u}{c - u}\right)\right|^{\bar{u}}_{u_0},
   	\end{align*}
   	which is violated as $\varepsilon \to 0$ and thus $u \to c$ since then the RHS goes to minus infinity in contrast to the bounded LHS.
\end{proof}
In the following we want to give the proof of Lem. \ref{lem:A_neg}
\begin{proof}
   	Since $u(1/c) < \varphi_A(1/c) = c$,  we assume that there exists an $s^\ast > 1/c$ such that
   	\[
   	0 < u(s) < \varphi_A(s),\;\;s\in(1/c,s^\ast),\quad\text{and}\quad u(s^\ast) = \varphi_A(s^\ast).
   	\]
   	Clearly this implies $g(s) > 0$ for $s \in (1/c,s^\ast)$ and $g(s^\ast) = 0$. Hence we also have
   	\[
   	\frac{\dd u}{\dd s} = \frac{(d-1)\overbrace{u}^{>0}\overbrace{(c^2 - u^2)}^{>0}\overbrace{(u - sc^2)}^{< 0}}{\underbrace{g}_{>0}}<0.
   	\]
   	Thus we must have for $s^\ast$
   	\begin{align*}
   		\frac{\dd u}{\dd s}(s^\ast) - \frac{\dd \varphi_A}{\dd s}(s^\ast) > 0.
   	\end{align*}
   	First we determine the derivative of $\varphi_A$ which is given by
   	\begin{align*}
   		\frac{\dd \varphi_A}{\dd s} &= \dfrac{\left(\dfrac{c}{2}\dfrac{e_{pp}}{\sqrt{e_p}}p'(s) - c^2\right)(c\sqrt{e_p}s - 1)
   			- (c\sqrt{e_p} - sc^2)\left(\dfrac{c}{2}\dfrac{e_{pp}}{\sqrt{e_p}}p'(s)s + c\sqrt{e_p}\right)}{(c\sqrt{e_p}s - 1)^2}\notag\\
   		&= \dfrac{(d-1)c^3(s^2c^2 - 1)u(us - 1)e_{pp}(e + p) + 2c^2g\sqrt{e_p}(1 - e_p)}{2\sqrt{e_p}g(c\sqrt{e_p}s - 1)^2}.
   	\end{align*}
    A direct calculation gives
    \begin{align*}
        \frac{\dd u}{\dd s} - \frac{\dd \varphi_A}{\dd s} =& \frac{(d-1)u(c^2 - u^2)(u - sc^2)}{g}\\ 
        & -\dfrac{(d-1)c^3(s^2c^2 - 1)u(us - 1)e_{pp}(e + p) + 2c^2g\sqrt{e_p}(1 - e_p)}{2\sqrt{e_p}g(c\sqrt{e_p}s - 1)^2}\\
        =& \left(2\sqrt{e_p}g(c\sqrt{e_p}s - 1)^2\right)^{-1}\left[2\sqrt{e_p}(c\sqrt{e_p}s - 1)^2(d-1)u(c^2 - u^2)(u - sc^2)\right.\\
        &-\left. (d-1)c^3(s^2c^2 - 1)u(us - 1)e_{pp}(e + p) + 2c^2g\sqrt{e_p}(1 - e_p)\right]
    \end{align*}
    Clearly we have for the denominator
    \[
    2\sqrt{e_p}g(c\sqrt{e_p}s - 1)^2 > 0\quad\text{and}\quad \lim_{s\to s^\ast}2\sqrt{e_p}g(c\sqrt{e_p}s - 1)^2 = 0.
    \]
    For $s = s^\ast$, we in particular have due to \eqref{def:g_A} and \eqref{AB_u_phi_relation} that
    \[
    u(s^\ast) = \frac{c\sqrt{e_p} - s^\ast c^2}{c\sqrt{e_p}s^\ast - 1}\quad\text{and}\quad A = 0\quad\stackrel{\eqref{def:g_A}}{\Leftrightarrow}\quad u - sc^2 = c\sqrt{e_p}(us - 1)<0.
    \]
    Applying these relations to the nominator at $s = s^\ast$ gives
    \begin{align*}
        &\left[2\sqrt{e_p}(c\sqrt{e_p}s - 1)^2(d-1)u(c^2 - u^2)(u - sc^2)\right.\\
         &- \left.\left.(d-1)c^3(s^2c^2 - 1)u(us - 1)e_{pp}(e + p) + \underbrace{2c^2g\sqrt{e_p}(1 - e_p)}_{=0}\right]\right|_{s=s^\ast}\\
        =\; &c(d-1)u(us - 1)\left.\left[2e_p(c\sqrt{e_p}s - 1)^2\left(c^2 - \left(\frac{c\sqrt{e_p} - sc^2}{c\sqrt{e_p}s - 1}\right)^2\right) - c^2(s^2c^2 - 1)e_{pp}(e + p)\right]\right|_{s=s^\ast}\\
        =\; &c(d-1)u(us - 1)\left.\left[2c^2e_p(c\sqrt{e_p}s - 1)^2 - 2e_p\left(c\sqrt{e_p} - sc^2\right)^2 - c^2(s^2c^2 - 1)e_{pp}(e + p)\right]\right|_{s=s^\ast}\\
        =\; &c(d-1)u(us - 1)\left.\left[2c^2e_p(s^2c^2 - 1)(e_p - 1) - c^2(s^2c^2 - 1)e_{pp}(e + p)\right]\right|_{s=s^\ast}\\
        =\; &c^3(d-1)(s^2c^2 - 1)u(us - 1)\left.\left[\underbrace{2e_p(e_p - 1) - e_{pp}(e + p)}_{>0}\right]\right|_{s=s^\ast}\\
        <\; &0.
    \end{align*}
   	In summary, we hence yield
   	\[
   	0 < \frac{\dd u}{\dd s}(s^\ast) - \frac{\dd \varphi_A}{\dd s}(s^\ast) = \lim_{s\to s^\ast}\frac{\dd u}{\dd s}(s ) - \frac{\dd \varphi_A}{\dd s}(s) = - \infty,
   	\]
   	which is a contradiction and thus we have proven the claimed statement.
\end{proof}
In order to obtain the statment for \textbf{Case III} we argued using a continuity argument. This will be made more precise with the following to lemmas.
\begin{lemma}
	Let $u_0 > \overline{u}$, and suppose that for the initial datum $(u_0,p_0,\eta_0)$, \textbf{\textup{Case I}} applies.  
	Then there exists $\varepsilon > 0$ such that \textbf{\textup{Case I}} also applies for any initial datum
	\[
	(u,p,\eta)(0) \in (u_0-\varepsilon,u_0+\varepsilon) \times \{p_0\} \times \{\eta_0\}.
	\]
\end{lemma}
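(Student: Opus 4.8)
The plan is to reduce the claim to Lemma~\ref{lem:case_1} by observing that the threshold velocity $\overline{u}$ is insensitive to a perturbation of the initial velocity alone. First I would recall from \eqref{def_velo_bound} that $\overline{u} = c\,(e^{2\mathcal{I}_p}-1)/(e^{2\mathcal{I}_p}+1)$ with
\[
\mathcal{I}_p = \int_0^{p_0}\frac{\sqrt{e_p}}{c(e+p)}\,\dd p = \frac1c\int_{\gamma_0}^{\infty} \left[\frac{(\gamma^2\Phi_i'(\gamma)-1)\Phi_i'(\gamma)}{\gamma\Phi_i(\gamma)+1}\right]^{\frac12}\dd\gamma, \qquad \gamma_0 = \gamma(p_0,\eta_0),
\]
so that $\overline{u}$ depends only on $p_0$ and $\eta_0$. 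Since the perturbed initial data considered in the statement all share the same $p_0$ and $\eta_0$, the associated threshold is the same $\overline{u}$ for every one of them.

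Next I would exploit that, by hypothesis, $u_0 > \overline{u}$, while also $u_0 \in (-c,c)$; hence $(\overline{u},c)$ is an open interval containing $u_0$. Setting $\varepsilon := \tfrac12\min\{\,u_0-\overline{u},\; c-u_0\,\} > 0$ guarantees that every $u \in (u_0-\varepsilon,u_0+\varepsilon)$ satisfies $\overline{u} < u < c$, and in particular $u \in (0,c)$. Therefore Lemma~\ref{lem:no_zeros} still furnishes a unique solution of \eqref{systemODE} on $(0,1/c]$ with $p>0$ and $0<|u|<c$, Lemma~\ref{lem:u_eq_s} still yields the crossing point $\overline{s}$, and Lemma~\ref{lem:A_neg} still applies; all the hypotheses feeding into Lemma~\ref{lem:case_1} remain valid for the datum $(u,p_0,\eta_0)$.

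Finally I would invoke Lemma~\ref{lem:case_1} itself: since $u > \overline{u}$, there exists $s^\ast > 1/c$ with $0 < u(s) < \varphi_A(s)$ for $s\in(1/c,s^\ast)$ and $u(s^\ast) = \varphi_A(s^\ast) = 1/s^\ast$, $p(s^\ast)=0$, which is exactly the assertion that Case~I applies for the initial datum $(u,p_0,\eta_0)$. As this holds for every $u\in(u_0-\varepsilon,u_0+\varepsilon)$, the lemma follows.

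I do not expect a genuine obstacle here; the one point requiring care is the observation that $\overline{u}$ does not move when only $u_0$ is perturbed, so that the strict inequality $u_0>\overline{u}$ is an open condition in the velocity variable. If one preferred to avoid referring to the explicit formula for $\overline{u}$, one could instead argue via continuous dependence of the ODE flow on $(0,1/c]$ together with the monotonicity of $\mathcal{I}_p$ in $p_0$; but since $p_0$ and $\eta_0$ are held fixed throughout, this refinement is not needed.
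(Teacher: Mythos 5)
Your proof is correct and establishes the lemma as stated, but it takes a genuinely different route from the paper. The paper does not exploit the hypothesis $u_0 > \overline{u}$ beyond the opening invocation of Lemma~\ref{lem:case_1} to obtain the vacuum point $\tilde{s}^\ast$; from there it runs a continuous-dependence argument on the ODE flow over the compact interval $[0,\tilde{s}^\ast-\mu]$, combined with the integral inequality \eqref{ineq:p_vac_int} and the finiteness of $\int_0^{1/\tilde{s}^\ast}(c^2-u^2)^{-1}\,\dd u$, to show that the perturbed pressure still reaches zero at a strictly positive velocity. Your argument is shorter and cleaner precisely because you notice that $\overline{u}$, being determined by $\mathcal{I}_p$ which is an integral over $(0,p_0)$ at fixed $\eta_0$, is independent of the initial velocity; hence the strict inequality $u_0>\overline{u}$ is an open condition in the only variable that moves, and Lemma~\ref{lem:case_1} simply reapplies. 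What the paper's longer argument buys is robustness: it only uses the assumption that Case~I applies and never the numerical threshold $\overline{u}$, so it would go through even if the clause $u_0>\overline{u}$ were dropped from the hypothesis; it also mirrors the structure of the companion lemma for Case~II, where no sharp velocity threshold exists and a reduction like yours is unavailable. As stated, with $u_0>\overline{u}$ assumed, both arguments are valid, and yours is the more economical of the two.
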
  %
   \begin{proof}
       Let $(\tilde{u},\tilde{p},\tilde{\eta})(s)$ denotes the solution corresponding to the initial data $(u_0,p_0,\eta_0)$. By Lemma \ref{lem:case_1}, we have that there exists an $\tilde{s}^\ast$ such that
       \[
         \tilde{u}(\tilde{s}^\ast) = \varphi_A(\tilde{s}^\ast) = \frac{1}{\tilde{s}^\ast}\quad\text{and}\quad \tilde{p}(\tilde{s}^\ast) = 0.
       \]
       We denote the integral
       \[
         \mathcal{I} := \int_{0}^{1/\tilde{s}^\ast} \frac{1}{c^2 - u^2}\dd u > 0.
       \]
       There exists a suitably small $\delta > 0$ such that
       \begin{align*}
         0 < \int_{0}^{\delta} \frac{\sqrt{e_p}}{c(e + p)}\dd p < \frac{1}{4}\mathcal{I} , \qquad \text{and} \qquad   \frac{3}{4}\mathcal{I} < \int_{0}^{1/\tilde{s}^\ast-\delta} \frac{1}{c^2 - u^2}\dd u. 
       \end{align*}
       Since $\tilde{p}(s)$ is continuous on $[0,\tilde{s}^\ast]$, there exists a sufficiently small $\mu > 0$ such that
       \begin{align*}
         \tilde{p}(\tilde{s}^\ast - \mu) < \frac{\delta}{2}.
       \end{align*}
       Given a sufficiently small $\varepsilon > 0$, the unique solution $(u,p,\eta)(s)$ for initial data\\
       ${(u,p,\eta)(0) \in (u_0-\varepsilon,u_0+\varepsilon)\times\{p_0\}\times\{\eta_0\}}$ satisfies
       \begin{align}
           \left| p(\tilde{s}^\ast - \mu) - \tilde{p}(\tilde{s}^\ast - \mu)\right| < \frac{\delta}{4}\quad\text{and}\quad |u(\tilde{s}^\ast - \mu) - \tilde{u}(\tilde{s}^\ast - \mu)| < \frac{\delta}{4}.\label{ineq_dist_sol_v1}
       \end{align}
       For $\sigma > \tilde{s}^\ast - \mu$, according to \eqref{ineq:p_vac_int}  and due to the monotonicity of $p$ and $u$, we have
       \begin{align}
           \int_{p(\sigma)}^{p(\tilde{s}^\ast - \mu)} \frac{\sqrt{e_p}}{c(e + p)}\dd p > \int_{u(\sigma)}^{u(\tilde{s}^\ast-\mu)} \frac{1}{c^2 - u^2}\dd u.\label{ineq:int_p_u_case3}
       \end{align}
       Now, due to \eqref{ineq_dist_sol_v1}, we have
       \[
       p(\tilde{s}^\ast - \mu) < \tilde{p}(\tilde{s}^\ast - \mu) + \frac{\delta}{4} < \delta\quad\text{and}\quad u(\tilde{s}^\ast - \mu) > \tilde{u}(\tilde{s}^\ast - \mu) - \frac{\delta}{4} > \frac{1}{\tilde{s}^\ast} - \delta.
       \]
       Combining these relations with inequality \eqref{ineq:int_p_u_case3} gives
       \begin{align*}
           \frac{1}{4}\mathcal{I} &> \int_{p(\sigma)}^{\delta} \frac{\sqrt{e_p}}{c(e + p)}\dd p > \int_{p(\sigma)}^{p(\tilde{s}^\ast - \mu)} \frac{\sqrt{e_p}}{c(e + p)}\dd p \stackrel{\eqref{ineq:int_p_u_case3}}{>} \int_{u(\sigma)}^{u(\tilde{s}^\ast-\mu)} \frac{1}{c^2 - u^2}\dd u\notag\\
           &> \int_{u(\sigma)}^{\frac{1}{\tilde{s}^\ast} - \delta} \frac{1}{c^2 - u^2}\dd u. 
       \end{align*}
       Since $\sigma > \tilde{s}^\ast - \mu$ and $u(\sigma)$ decreasing, there must exists an $s^\ast > \tilde{s}^\ast - \mu$ such that
       \[
         1/s > \varphi_A(s) > u(s)\quad\text{for}\quad \frac{1}{c} < s < s^\ast\quad\text{and}\quad u(s^\ast) = \varphi_A(s^\ast) = \frac{1}{s^\ast} > 0.
       \]
       Otherwise $u(\sigma)$ would decrease further due to $1/\sigma > \varphi_A(\sigma) > u(\sigma)$ and then there would exist a $\sigma$ such that
       \[
         \int_{u(\sigma)}^{\frac{1}{\tilde{s}^\ast} - \delta} \frac{1}{c^2 - u^2}\dd u > \frac{1}{2}\mathcal{I}.
       \]
   \end{proof}
   \begin{lemma}
	Let $u_0$ be sufficiently small, and suppose that for the initial datum $(u_0,p_0,\eta_0)$, \textup{\textbf{Case II}} applies.  
	Then there exists $\varepsilon > 0$ such that \textup{\textbf{Case II}} also applies for any initial datum
	\[
	(u,p,\eta)(0) \in (u_0-\varepsilon,u_0+\varepsilon) \times \{p_0\} \times \{\eta_0\}.
	\]
\end{lemma}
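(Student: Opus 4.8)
The plan is to mirror the proof of the preceding continuity lemma for \textbf{Case I}, exchanging the control of the pressure from \emph{above} (vacuum formation) for control from \emph{below}. First I would fix the reference solution $(\tilde u,\tilde p,\tilde\eta)$ with datum $(u_0,p_0,\eta_0)$ and record what \textbf{Case II} gives: a time $\tilde s^\ast>1/c$ with $0<\tilde u(s)<\varphi_A(s)<1/s$ on $(1/c,\tilde s^\ast)$ and $\tilde u(\tilde s^\ast)=\varphi_A(\tilde s^\ast)=0$, together with a fixed constant $\underline p>0$ (extracted from the proof of Lemma \ref{lem:case_2}) such that $\tilde p(s)\ge\underline p$ for all $s>1/c$. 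Since $\tilde u$ is continuous and $\tilde u(\tilde s^\ast)=0$, one has $\tilde u(\tilde s^\ast-\mu)\to0$ as $\mu\to0^+$, while $\tilde p(\tilde s^\ast-\mu)\ge\underline p$ for every $\mu$. I would therefore fix $\mu>0$ so small that
\[
\int_{0}^{2\tilde u(\tilde s^\ast-\mu)}\frac{\dd u}{c^2-u^2}\;<\;\frac12\int_{0}^{\underline p/2}\frac{\dd p}{c(e+p)},
\]
the right-hand side being a fixed positive number, and set $s_0:=\tilde s^\ast-\mu$.

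Next I would invoke continuous dependence on the initial datum on the compact interval $[0,s_0]$. There the unperturbed solution is smooth and satisfies the strict inequalities $\tilde u<\varphi_A$ (so $g\ge\delta_g>0$), $0<\tilde u$, $\tilde p\ge\underline p$, $|\tilde u|<c$, so the right-hand side of \eqref{systemODE} is Lipschitz on a tube around its graph. A Gronwall estimate then produces $\varepsilon>0$ (with $(u_0-\varepsilon,u_0+\varepsilon)\subset(0,c)$) such that every solution $(u,p,\eta)$ with datum $(u_0',p_0,\eta_0)$, $u_0'\in(u_0-\varepsilon,u_0+\varepsilon)$, exists on $[0,s_0]$ and obeys $0<u(s_0)<2\tilde u(s_0)$, $p(s_0)>\underline p/2>0$, and $u(s_0)<\varphi_A(s_0)$.

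Then I would continue such a solution past $s_0$: as long as $u>0$ and $p>0$, the ``as long as'' version of Lemma \ref{lem:A_neg} (its proof being purely a continuation argument) gives $0<u<\varphi_A$, hence $g>0$, $us<1$, and therefore $u'<0$ and $p'<0$ with both decreasing. From $\eqref{systemODE}_{1,2}$ one obtains $\dd p/\dd u<c(e+p)/(c^2-u^2)$, and integrating from $s_0$ to $s$,
\[
\int_{p(s)}^{p(s_0)}\frac{\dd p}{c(e+p)}<\int_{u(s)}^{u(s_0)}\frac{\dd u}{c^2-u^2}<\int_{0}^{2\tilde u(s_0)}\frac{\dd u}{c^2-u^2}<\frac12\int_{0}^{\underline p/2}\frac{\dd p}{c(e+p)}.
\]
If $p(s)$ tended to $0$ the left side would approach $\int_0^{p(s_0)}\frac{\dd p}{c(e+p)}\ge\int_0^{\underline p/2}\frac{\dd p}{c(e+p)}$, a contradiction; hence $p$ is bounded below by some $p_{\min}>0$ on the maximal interval of existence, so $e_{pp}<0$ forces $e_p(p(s),\eta_0)\le e_p(p_{\min},\eta_0)<\infty$. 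With $e_p$ bounded above, the numerator $c\sqrt{e_p}-sc^2$ of $\varphi_A$ becomes negative for large $s$ while the denominator stays positive for $s>1/c$, so $\varphi_A$, which equals $c>0$ at $s=1/c$, has a zero $s^\ast$; since $0<u<\varphi_A$ holds while $u>0$, the velocity must vanish at some $s^{\ast\ast}\le s^\ast$, after which the solution is stationary, and the full \textbf{Case II} structure $0<u(s)<\varphi_A(s)<1/s$ on $(1/c,s^\ast)$ with $u(s^\ast)=\varphi_A(s^\ast)=0$ holds. This gives \textbf{Case II} for every $u_0'\in(u_0-\varepsilon,u_0+\varepsilon)$.

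I expect the main obstacle to be the continuous-dependence step up to the near-degenerate time $s_0$: one must certify that $g$ stays uniformly bounded away from $0$ on $[0,s_0]$ and that the perturbed trajectory does not escape $\{0<u<c,\ p>0\}$ before $s_0$. Both follow from the fact that the unperturbed solution satisfies all these inequalities \emph{strictly} on the compact interval $[0,s_0]$, so the Gronwall estimate may be carried out on a fixed tube around its graph; one also needs the elementary observation that, since $e_{pp}<0$, a lower bound on $p$ translates into an upper bound on $e_p$ and hence forces $\varphi_A$ — and with it $u$ — to reach zero.
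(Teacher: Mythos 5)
Your proposal is correct and follows essentially the same approach as the paper: fix $\mu$ so the velocity integral from $0$ to the reference velocity at $s_0=\tilde s^\ast-\mu$ is small compared with the pressure integral from $0$ to a fixed fraction of the limiting pressure, pass this over to the perturbed solution by continuous dependence on $[0,s_0]$, and then use the comparison $\frac{\dd p}{c(e+p)}<\frac{\dd u}{c^2-u^2}$ (valid for $s>1/c$) to rule out $p\to 0$ and force $\varphi_A$, and hence $u$, to reach zero. The only differences from the paper's proof are cosmetic — you select $\mu$ directly and work with factor-of-$2$ tolerances and the uniform lower bound $\underline p$, whereas the paper introduces an auxiliary $\delta$, works with factor-of-$4$ tolerances, and uses $p^\ast=\tilde p(\tilde s^\ast)$; these are equivalent choices.
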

   \begin{proof}
       Let $(\tilde{u},\tilde{p},\tilde{\eta})(s)$ denotes the solution corresponding to the initial datum $(u_0,p_0,\eta_0)$. By Lemma \ref{lem:case_2}, we have that there exists a point $\tilde{s}^\ast$ with $1/c < \tilde{s}^\ast$ such that
       \[
         \tilde{u}(\tilde{s}^\ast) = 0\quad\text{and}\quad \tilde{p}(\tilde{s}^\ast) = p^\ast > 0.
       \]
       We denote
       \[
         \mathcal{I} := \int_{0}^{p^\ast/2} \frac{1}{c(e + p)}\dd p > 0.
       \]
       There exists a suitably small $\delta > 0$ such that
       \begin{align*}
         0 < \int_{0}^{\delta} \frac{1}{c^2 - u^2}\dd u < \mathcal{I}, 
       \end{align*}
       and also a sufficiently small $\mu > 0$, due to the  continuity of $\tilde{u}$, such that
       \begin{align}
         \tilde{u}(\tilde{s}^\ast - \mu) < \frac{\delta}{4}.\label{ineq:u_small_case3}
       \end{align}
       Given a sufficiently small $\varepsilon > 0$, the unique solution $(u,p,\eta)(s)$ for the initial datum $(u,p,\eta)(0) \in (u_0-\varepsilon,u_0+\varepsilon)\times\{p_0\}\times\{\eta_0\}$ satisfies
       \begin{align}
           |p(\tilde{s}^\ast - \mu) - \tilde{p}(\tilde{s}^\ast - \mu)| < \frac{p^\ast}{4}\quad\text{and}\quad |u(\tilde{s}^\ast - \mu) - \tilde{u}(\tilde{s}^\ast - \mu)| < \frac{\delta}{4}.\label{ineq_dist_sol_v2}
       \end{align}
       As in \eqref{ineq:int_p_u_case2}, we have for $\sigma > \tilde{s}^\ast - \mu$
       \begin{align}
           \int_{p(\sigma)}^{p(\tilde{s}^\ast - \mu)} \frac{1}{c(e + p)}\dd p < \int_{u(\sigma)}^{u(\tilde{s}^\ast-\mu)} \frac{1}{c^2 - u^2}\dd u.\label{ineq:int_p_u_case3_v2}
       \end{align}
       Now, due to \eqref{ineq_dist_sol_v2} we have
       \[
         p(\tilde{s}^\ast - \mu) \geq \tilde{p}(\tilde{s}^\ast - \mu) - \frac{p^\ast}{4} > \tilde{p}(\tilde{s}^\ast) - \frac{p^\ast}{4} > \frac{p^\ast}{2}
       \]
       and additionally, by \eqref{ineq:u_small_case3},
       \[
       u(\tilde{s}^\ast - \mu) < \tilde{u}(\tilde{s}^\ast - \mu) + \frac{\delta}{4} < \frac{\delta}{2}.
       \]
       Combining these relations with inequality \eqref{ineq:int_p_u_case3_v2} gives
       \begin{align*}
           \int_{p(\sigma)}^{p^\ast/2} \frac{1}{c(e + p)}\dd p < \int_{p(\sigma)}^{p(\tilde{s}^\ast - \mu)} \frac{1}{c(e + p)}\dd p < \int_{u(\sigma)}^{u(\tilde{s}^\ast-\mu)} \frac{1}{c^2 - u^2}\dd u < \int_0^{\delta/2} \frac{1}{c^2 - u^2}\dd u < \mathcal{I}. 
       \end{align*}
       Since $p(\sigma)$ decreases, there exists a costant $p_m > 0$ such that $p(\sigma)>p_m>0$ as $\sigma > \tilde{s}^\ast - \mu$ since otherwise the left hand side integral would tend to $\mathcal{I}$ contradicting the inequality. Therefore we can conclude that there exists an $s^\ast > \tilde{s}^\ast - \mu$ such that
       \[
         \varphi_A(s^\ast) = \frac{c\sqrt{e_p} - s^\ast c^2}{c\sqrt{e_p}s^\ast - 1} = 0\quad\text{with}\quad s^\ast = \frac{1}{c}\sqrt{e_p(p(s^\ast),\eta_0)}.
       \]
       Since $0 < u(s) < \varphi_A(s)$ for $s \in (1/c,s^\ast)$, we have $u(s^\ast) = 0$.
   \end{proof}

\section{Auxiliary estimates for the uniqueness of the shock solution}\label{app:shock_ineq}
In the following we collect some of the technical details of the proof of Lem.\ \ref{lem:unique_shock}. First we study the function $\varphi_A(s) > u(s)$ and we have
\begin{align*}
	\varphi_A(s) = \frac{c\sqrt{e_p} - sc^2}{c\sqrt{e_p}s - 1}\quad\Leftrightarrow\quad\tilde{\varphi}_A(s) = \mathcal{T}_\sigma^-(\varphi_A,s) = -\frac{c}{\sqrt{e_p}}\quad\Leftrightarrow\quad v(\tilde{\varphi}_A(s)) = -\frac{c}{\sqrt{e_p - 1}}.
\end{align*}
Due to Lem. \ref{lem:blowup}, \eqref{deriv:trafo_m} and \eqref{deriv:u_v} we have
\begin{align*}
	\varphi_A(s) > u(s)\quad\Rightarrow\quad 0 > \tilde{\varphi}_A(s) > \tilde{u}(s)\quad\Rightarrow\quad 0 > v(\tilde{\varphi}_A(s)) > v(\tilde{u}(s)).
\end{align*}
Furthermore we hence have
\begin{align}
	\tilde{\varphi}_A(s)^2 = \frac{c^2}{e_p} < \tilde{u}(s)^2\quad\text{and}\quad v(\tilde{\varphi}_A(s))^2 = \frac{c^2}{e_p - 1} < v(\tilde{u}(s))^2.\label{phi_u_square_mono}
\end{align}
From \eqref{ep_gamma_phi_rel} we yield
\begin{align}
	e_p = \underbrace{\frac{\gamma^2\Phi_i'(\gamma)}{\gamma^2\Phi_i'(\gamma) - 1}}_{0<\dots<1}\chi_i(\gamma) < \chi_i(\gamma).\label{ep_chi_ineq}
\end{align}
Combining these results we get
\begin{align}
	\chi_i(\gamma)v(\tilde{u})^2 - c^2 &\stackrel{\eqref{phi_u_square_mono}}{>} \chi_i(\gamma)v(\tilde{\varphi}_A)^2 - c^2 \stackrel{\eqref{ep_chi_ineq}}{>} \frac{c^2 e_p}{e_p - 1} - c^2 = \frac{c^2}{e_p - 1} > 0.\label{duG2_ineq_help}
\end{align}
Let us now prove that \eqref{jac_det} holds when $s=s^\ast$ and hence we explicitly work with $u_\delta(s^\ast) = 0$ which gives
\begin{align*}
	\tilde{u}_\delta(s^\ast) = \mathcal{T}_\sigma^-(u_\delta(s^\ast),s^\ast) = -\frac{1}{s^\ast}\quad\text{and}\quad v_\delta := v(\tilde{u}_\delta(s^\ast)) = -\frac{1}{s^\ast}\tilde\Gamma_\delta(\tilde{u}_\delta(s^\ast)) = -\frac{c}{\sqrt{c^2(s^\ast)^2 - 1}}.
\end{align*}
Using \eqref{deriv:u_G1} - \eqref{deriv:gamma_G2} the determinant is given by
\begin{align}
    \det(\mathbf{D}_\delta G)(\tilde{u}_\delta,\gamma_\delta)
    &= \dfrac{1}{c^2}\dfrac{\chi_i(\gamma_\delta)}{\gamma_\delta}v_\delta\tilde\Gamma_\delta^2\left(\dfrac{\gamma_\delta q_i(\gamma_\delta) - \chi_i(\gamma_\delta)}{\gamma_\delta^2}v_\delta - \dfrac{c^2}{\gamma_\delta^2 v_\delta}\right)\nonumber\\
    &-\dfrac{\gamma_\delta q_i(\gamma_\delta) - \chi_i(\gamma_\delta)}{\gamma_\delta^2}\tilde\Gamma_\delta
    \dfrac{\tilde\Gamma_\delta^3}{\gamma_\delta v_\delta^2}\left(\chi_i(\gamma_\delta)v^2_\delta - c^2\right)  \label{jac_det0}\\
    &= \frac{\tilde\Gamma_\delta^2}{\gamma_\delta^3}\left\{\left[\gamma_\delta q_i(\gamma_\delta) - \chi_i(\gamma_\delta)\right]\left[\frac{\chi_i(\gamma_\delta)v_\delta^2}{c^2}
        -\dfrac{\tilde\Gamma_\delta^2}{ v_\delta^2}\left(\chi_i(\gamma_\delta)v^2_\delta - c^2\right)\right]-\chi_i(\gamma_\delta)\right\}.\nonumber
\end{align}
We further have from \eqref{ep_gamma_phi_rel}, \eqref{def:chi}, and \eqref{deriv:gamma_chi} that
\begin{align*}
	q_{i}(\gamma_\delta)= \gamma_\delta\Phi'_i+\Phi_i,\quad \chi_i(\gamma_\delta)=\gamma_\delta\Phi_i+1,\quad e_{p}(\gamma_\delta)=\frac{\gamma_\delta^2\Phi'_i}{\gamma_\delta^2\Phi'_i-1}\chi_i(\gamma_\delta).
\end{align*}
Therefore, when $s=s^\ast$, we can further estimate \eqref{jac_det0} as
\begin{align*}
    &\det(\mathbf{D}_\delta G)(\tilde{u}_\delta,\gamma_\delta) \notag\\
    &= \frac{\tilde\Gamma_\delta^2}{\gamma_\delta^3}\left\{\left[\gamma_\delta \left(\gamma_\delta\Phi'_i+\Phi_i\right) -\gamma_\delta\Phi_i-1 \right]\left[\frac{\chi_i(\gamma_\delta)}{(cs^\ast)^2-1}
    -\frac{(cs^\ast)^2\left(\chi_i(\gamma_\delta) - (cs^\ast)^2+1\right)}{(cs^\ast)^2-1}\right]-\chi_i(\gamma_\delta)\right\}\nonumber\\
    &= \frac{\tilde\Gamma_\delta^2}{\gamma_\delta^3}\left\{\left[\gamma_\delta^2 \Phi'_i-1 \right]\left[-\chi_i(\gamma_\delta)+(cs^\ast)^2\right]-\chi_i(\gamma_\delta)\right\}.
\end{align*}
Note that for $\sigma = 1/s^\ast$ with $u_\delta(s^\ast) = 0$ \eqref{Lax_cond} implies
\begin{align*}
	(s^\ast)^2 > \frac{e_{p}(\gamma_\delta)}{c^2}\quad\Leftrightarrow\quad (cs^\ast)^2 > e_{p}(\gamma_\delta) = \frac{\gamma_\delta^2\Phi'_i}{\gamma_\delta^2\Phi'_i-1}\chi_i(\gamma_\delta).
\end{align*} 
Thus we can finally deduce
\begin{align*}
    \det(\mathbf{D}_\delta G)(\tilde{u}_\delta,\gamma_\delta)  < \frac{\tilde\Gamma_\delta^2}{\gamma_\delta^3}\left\{\left[\gamma_\delta^2 \Phi'_i-1 \right]\frac{\chi_i(\gamma_\delta)}{\gamma_\delta^2 \Phi'_i-1 }-\chi_i(\gamma_\delta)\right\}=0
\end{align*}
for $\sigma = 1/s^\ast$. We now want to prove $\Psi_I + \Psi_{II} < 0$ and therefore we recall \eqref{def:psi_I} -- \eqref{def:psi_II_2} 
%
%
\begin{align*}
	\Psi_I &:= -\left[\partial_{\tilde{u}_\delta}G^{(1)}\partial_{\tilde{u}}G^{(2)} - \partial_{\tilde{u}_\delta}G^{(2)}\partial_{\tilde{u}}G^{(1)}\right]\frac{(c^2s^2 - 1)\left(c^2 - u^2\right)}{c^2s - u}\notag\\
	&\hphantom{:}= \psi_{I,1} + \psi_{I,2},\\
	\psi_{I,1} &:= \frac{(c^2s^2 - 1)\left(c^2 - u^2\right)}{c^2s - u}\dfrac{1}{c^2}\dfrac{\chi_i(\gamma_\delta)}{\gamma_\delta}v_\delta\tilde\Gamma_\delta^2\dfrac{\tilde\Gamma^3}{\gamma v^2}\left(\chi_i(\gamma)v^2 - c^2\right),\\
	\psi_{I,2} &:= -\frac{(c^2s^2 - 1)\left(c^2 - u^2\right)}{c^2s - u}\dfrac{\tilde\Gamma_\delta^3}{\gamma_\delta v_\delta^2}\left(\chi_i(\gamma_\delta)v^2_\delta - c^2\right)\dfrac{1}{c^2}\dfrac{\chi_i(\gamma)}{\gamma}v\tilde\Gamma^2,\\
	\Psi_{II} &:= \left[\partial_{\tilde{u}_\delta}G^{(1)}\partial_{\gamma}G^{(2)} - \partial_{\tilde{u}_\delta}G^{(2)}\partial_{\gamma}G^{(1)}\right]\frac{\gamma}{\gamma^2\Phi_i'(\gamma) - 1}(us - 1)\chi_i(\gamma)\notag\\
	&\hphantom{:}= \psi_{II,1} + \psi_{II,2},\\
	\psi_{II,1} &:= -\frac{\gamma(us - 1)\chi_i(\gamma)}{\gamma q_i(\gamma) - \chi_i(\gamma)}\dfrac{1}{c^2}\dfrac{\chi_i(\gamma_\delta)}{\gamma_\delta}v_\delta\tilde\Gamma_\delta^2\left(\dfrac{\gamma q_i(\gamma) - \chi_i(\gamma)}{\gamma^2}v - \dfrac{c^2}{\gamma^2 v}\right),\\
	\psi_{II,2} &:= \frac{(us - 1)\chi_i(\gamma)\tilde\Gamma}{\gamma}\dfrac{\tilde\Gamma_\delta^3}{\gamma_\delta v_\delta^2}\left(\chi_i(\gamma_\delta)v^2_\delta - c^2\right).
\end{align*}
%
%
Thus we get
\begin{align*}
	\psi_{I,1} + \psi_{II,1}
	&= \frac{(c^2s^2 - 1)\left(c^2 - u^2\right)}{c^2s - u}\dfrac{1}{c^2}\dfrac{\chi_i(\gamma_\delta)}{\gamma_\delta}v_\delta\tilde\Gamma_\delta^2\dfrac{\tilde\Gamma^3}{\gamma v^2}\left(\chi_i(\gamma)v^2 - c^2\right)\\
	&- \frac{\gamma(us - 1)\chi_i(\gamma)}{\gamma q_i(\gamma) - \chi_i(\gamma)}\dfrac{1}{c^2}\dfrac{\chi_i(\gamma_\delta)}{\gamma_\delta}v_\delta\tilde\Gamma_\delta^2\left(\dfrac{\gamma q_i(\gamma) - \chi_i(\gamma)}{\gamma^2}v - \dfrac{c^2}{\gamma^2 v}\right)\\
	&= \frac{1}{\gamma q_i(\gamma) - \chi_i(\gamma)}\frac{\chi_i(\gamma_\delta)v_\delta\tilde\Gamma_\delta^2}{c^2\gamma_\delta}\left[\frac{(c^2s^2 - 1)\left(c^2 - u^2\right)}{c^2s - u}\dfrac{\tilde\Gamma^3}{\gamma v^2}\left(\chi_i(\gamma)v^2 - c^2\right)\left(\gamma q_i(\gamma) - \chi_i(\gamma)\right)\right.\\
	&- \left.\gamma(us - 1)\chi_i(\gamma)\left(\dfrac{\gamma q_i(\gamma) - \chi_i(\gamma)}{\gamma^2}v - \dfrac{c^2}{\gamma^2 v}\right)\right].
\end{align*}
\newpage
Observe that the following terms can be simplified by algebraic manipulations using \eqref{eq:velo_trafo_m} and \eqref{def:new_quantities}
\begin{align*}
	&\hphantom{=} \frac{(c^2s^2 - 1)\left(c^2 - u^2\right)}{c^2s - u}\dfrac{\tilde\Gamma^3}{\gamma v^2}\chi_i(\gamma)v^2\left(\gamma q_i(\gamma) - \chi_i(\gamma)\right)
	- \gamma(us - 1)\chi_i(\gamma)\dfrac{\gamma q_i(\gamma) - \chi_i(\gamma)}{\gamma^2}v\\
	&= (\gamma q_i(\gamma) - \chi_i(\gamma))\tilde\Gamma\frac{\chi_i(\gamma)}{\gamma}\left[\frac{(c^2s^2 - 1)\left(c^2 - u^2\right)}{c^2s - u}\tilde\Gamma^2
	- (us - 1)\tilde{u}\right]\\
	&= (\gamma q_i(\gamma) - \chi_i(\gamma))\tilde\Gamma\frac{\chi_i(\gamma)}{\gamma}\left[\frac{(c^2s^2 - 1)\left(c^2 - u^2\right)}{c^2s - u}\dfrac{c^2}{c^2 - c^4\dfrac{(us - 1)^2}{(c^2s - u)^2}}
	- (us - 1)c^2\dfrac{us - 1}{c^2s - u}\right]\\
	&= (\gamma q_i(\gamma) - \chi_i(\gamma))\tilde\Gamma\frac{\chi_i(\gamma)}{\gamma}\frac{(c^2 - u^2)(c^2s^2 - 1)}{c^2s - u}.
\end{align*}
Altogether we hence yield
\begin{align*}
	\psi_{I,1} + \psi_{II,1} &= \frac{1}{\gamma q_i(\gamma) - \chi_i(\gamma)}\frac{\chi_i(\gamma_\delta)v_\delta\tilde\Gamma_\delta^2}{c^2\gamma_\delta}\left[(\gamma q_i(\gamma) - \chi_i(\gamma))\tilde\Gamma\frac{\chi_i(\gamma)}{\gamma}\frac{(c^2 - u^2)(c^2s^2 - 1)}{c^2s - u}\right.\\
	&\left.- \frac{c^2(c^2s^2 - 1)\left(c^2 - u^2\right)}{c^2s - u}\dfrac{\tilde\Gamma^3}{\gamma v^2}\left(\gamma q_i(\gamma) - \chi_i(\gamma)\right)
	+ (us - 1)\chi_i(\gamma)\dfrac{c^2}{\gamma v}\right]\\
	&= \left[\frac{(c^2 - u^2)(c^2s^2 - 1)}{c^2s - u}\frac{\tilde\Gamma}{\gamma}(\gamma q_i(\gamma) - \chi_i(\gamma))\left(\chi_i(\gamma) - c^2\frac{\tilde\Gamma^2}{v^2}\right)\right.\\
	&\left.+ (us - 1)\chi_i(\gamma)\frac{c^2}{\gamma v}\right]\frac{1}{\gamma q_i(\gamma) - \chi_i(\gamma)}\frac{\chi_i(\gamma_\delta)v_\delta\tilde\Gamma_\delta^2}{c^2\gamma_\delta}\\
	&= \left[\frac{(c^2 - u^2)(c^2s^2 - 1)}{c^2s - u}\frac{\tilde\Gamma}{\gamma}(\gamma q_i(\gamma) - \chi_i(\gamma))\left(\chi_i(\gamma) - c^2\frac{(c^2s - u)^2}{c^4(us - 1)^2}\right)\right.\\
	&\left.+ (us - 1)\chi_i(\gamma)\frac{c^2}{\gamma \tilde\Gamma}\frac{c^2s - u}{c^2(us - 1)}\right]\frac{1}{\gamma q_i(\gamma) - \chi_i(\gamma)}\frac{\chi_i(\gamma_\delta)v_\delta\tilde\Gamma_\delta^2}{c^2\gamma_\delta}\\
	&= \frac{1}{\gamma q_i(\gamma) - \chi_i(\gamma)}\frac{\chi_i(\gamma_\delta)v_\delta\tilde\Gamma_\delta^2}{c^2\gamma_\delta}\frac{1}{c^2\gamma\tilde\Gamma(c^2s^2 - 1)(us - 1)^2}\\
	&\left[(c^2 - u^2)(c^2s^2 - 1)\tilde\Gamma^2(\gamma q_i(\gamma) - \chi_i(\gamma))\left(c^2(us - 1)^2\chi_i(\gamma) - (c^2s - u)^2\right)\right.\\
	&+ \left.c^2(us - 1)^2(c^2s - u)^2\chi_i(\gamma)\right]
\end{align*}
We continue to reformulate the expression within the brackets and yield
\begin{align*}
	&\hphantom{=}\frac{c^4\gamma\gamma_\delta\tilde\Gamma(c^2s - 1)(us - 1)^2(\gamma q_i(\gamma) - \chi_i(\gamma))}{\chi_i(\gamma_\delta)v_\delta\tilde\Gamma_\delta^2}(\psi_{I,1} + \psi_{II,1})\\
	&= (c^2 - u^2)(c^2s^2 - 1)\tilde\Gamma^2(\gamma q_i(\gamma) - \chi_i(\gamma))\left(c^2(us - 1)^2\chi_i(\gamma) - (c^2s - u)^2\right)\\
	&+ c^2(us - 1)^2(c^2s - u)^2\chi_i(\gamma)\\
	&= (c^2s - u)^2(\gamma q_i(\gamma) - \chi_i(\gamma))\left(c^2(us - 1)^2\chi_i(\gamma) - (c^2s - u)^2\right)\\
	&+ c^2(us - 1)^2(c^2s - u)^2\chi_i(\gamma)\\
	&= c^2(c^2s - u)^2(us - 1)^2\chi_i(\gamma)[\gamma q_i(\gamma) - \chi_i(\gamma) + 1] - (c^2s - u)^4(\gamma q_i(\gamma) - \chi_i(\gamma))\\
	&= \left(c^2s - u\right)^4\left[\left(\frac{\tilde{u}^2}{c^2}\chi_i(\gamma) - 1\right)(\gamma q_i(\gamma) - \chi_i(\gamma)) + \frac{\tilde{u}^2}{c^2}\chi_i(\gamma)\right].
\end{align*}
Now we apply \eqref{eq:deriv_chi_phi_rel} to get
\begin{align*}
	&\hphantom{=}c^2\gamma\tilde\Gamma(c^2s^2 - 1)(us - 1)^2(\psi_{I,1} + \psi_{II,1})\\
	&= \left(c^2s - u\right)^4\left[\left(\frac{\tilde{u}^2}{c^2}\chi_i(\gamma) - 1\right)(\gamma q_i(\gamma) - \chi_i(\gamma)) + \frac{\tilde{u}^2}{c^2}\chi_i(\gamma)\right]\\
	&= \left(c^2s - u\right)^4\left[\left(\frac{\tilde{u}^2}{c^2}\chi_i(\gamma) - 1\right)\left(\gamma^2\Phi_i'(\gamma) - 1\right) + \frac{\tilde{u}^2}{c^2}\chi_i(\gamma)\right]\\
	&= \left(c^2s - u\right)^4\left[\left(\frac{\tilde{u}^2}{c^2}\chi_i(\gamma) - 1\right)\gamma^2\Phi_i'(\gamma) + 1\right]\\
	&= \left(c^2s - u\right)^4\left[\frac{\tilde{u}^2}{c^2}\frac{\gamma^2\Phi_i'(\gamma) - 1}{\gamma^2\Phi_i'(\gamma)}e_p(\gamma)\gamma^2\Phi_i'(\gamma) - \gamma^2\Phi_i'(\gamma) + 1\right]\\
	&= \left(c^2s - u\right)^4\left[\frac{1}{e_p(\gamma)}\left(\gamma^2\Phi_i'(\gamma) - 1\right)e_p(\gamma) -  \left(\gamma^2\Phi_i'(\gamma) - 1\right)\right]\\
	&= 0.
\end{align*}
For the last inequality we have used $\gamma^2\Phi_i'(\gamma) - 1 < 0$ together with \eqref{phi_u_square_mono}.
For $\psi_{I,2} + \psi_{II,2}$, we use \eqref{eq:velo_trafo_m} and \eqref{def:new_quantities} to obtain
\begin{align*}
	\psi_{I,2} + \psi_{II,2} &= -\frac{(c^2s^2 - 1)\left(c^2 - u^2\right)}{c^2s - u}\dfrac{\tilde\Gamma_\delta^3}{\gamma_\delta v_\delta^2}\left(\chi_i(\gamma_\delta)v^2_\delta - c^2\right)\dfrac{1}{c^2}\dfrac{\chi_i(\gamma)}{\gamma}v\tilde\Gamma^2\\
	&+ \frac{(us - 1)\chi_i(\gamma)\tilde\Gamma}{\gamma}\dfrac{\tilde\Gamma_\delta^3}{\gamma_\delta v_\delta^2}\left(\chi_i(\gamma_\delta)v^2_\delta - c^2\right)\\
	&= \left(\chi_i(\gamma_\delta)v^2_\delta - c^2\right)\dfrac{\tilde\Gamma_\delta^3}{\gamma_\delta v_\delta^2}\frac{\chi_i(\gamma)\tilde\Gamma}{\gamma}\left[us - 1 -\frac{(c^2s^2 - 1)\left(c^2 - u^2\right)}{c^2s - u}\frac{v\tilde\Gamma}{c^2}\right]\\
	&= \left(\chi_i(\gamma_\delta)v^2_\delta - c^2\right)\dfrac{\tilde\Gamma_\delta^3}{\gamma_\delta v_\delta^2}\frac{\chi_i(\gamma)\tilde\Gamma}{\gamma}\left[us - 1 -\frac{(c^2s^2 - 1)\left(c^2 - u^2\right)}{c^2s - u}\frac{v\tilde\Gamma}{c^2} \frac{(c^2s - u)(us - 1)}{(c^2s^2 - 1)\left(c^2 - u^2\right)}\right]\\
	&= c^2(\chi_i(\gamma_\delta)v^2_\delta - c^2)\frac{us - 1}{(c^2 - \tilde{u}^2)(c^2s - u)^2}\dfrac{\tilde\Gamma_\delta^3}{\gamma_\delta v_\delta^2}\frac{\chi_i(\gamma)\tilde\Gamma}{\gamma}\cdot\dots\\
	&\dots\left[(c^2s - u)^2 - c^2(us - 1)^2 - (c^2s^2 - 1)\left(c^2 - u^2\right)\right]\\
	&= 0.
\end{align*}
In summary we hence have
\begin{align}
	\psi_{I,1} + \psi_{II,1} < 0\quad\text{and}\quad\psi_{I,2} + \psi_{II,2} = 0.\label{psi_rel_fin}
\end{align}
%
\section*{Data availability statement}
Data will be made available on reasonable request.
\section*{Acknowledgments}
The work of   T.~R. was   carried out in the framework of the activities of the Italian National Group for Mathematical Physics [Gruppo Nazionale per la Fisica Matematica (GNFM/INdAM)].

\noindent F.T. gratefully acknowledges the Deutsche Forschungsgemeinschaft (DFG, German
Research Foundation) for the financial support through 525939417/
SPP 2410 Hyperbolic Balance Laws in Fluid Mechanics: Complexity, Scales, Randomness (CoScaRa) and thanks M.~Kunik for introducing him to the interesting topic of relativistic Euler equations and fruitful discussions. 

\noindent The research of Q.  Xiao was supported by the National Natural Science Foundation of China (Project No.~12271506) and the National Key Research and Development Program of China (Project No. 2020YFA0714200). 

\phantomsection
\bibliographystyle{abbrv}

\end{document}